\title[Morse quasi-geodesics via divergence and contraction]{Characterizations of Morse quasi-geodesics
via superlinear divergence and sublinear contraction}
\author{Goulnara N.\ Arzhantseva}
\address{Faculty of Mathematics\\ University of Vienna\\ Oskar-Morgenstern-Platz~1\\ 1090 Vienna, Austria}
 \email{\href{mailto:goulnara.arzhantseva@univie.ac.at}{\texttt{goulnara.arzhantseva@univie.ac.at}}}
\author{Christopher H.\ Cashen}
\address{Faculty of Mathematics\\ University of Vienna\\ Oskar-Morgenstern-Platz~1\\ 1090 Vienna, Austria}
  \email{\href{mailto:christopher.cashen@univie.ac.at}{\texttt{christopher.cashen@univie.ac.at}}}
\author{Dominik Gruber}
\address{Institut de Math\'ematiques\\ Universit\'e de Neuch\^atel\\
  Rue Emile-Argand~11\\ 2000 Neuch\^atel, Switzerland}
\curraddr{Department of Mathematics\\
ETH Z\"urich\\
R\"amistrasse 101\\
8092 Z\"urich, Switzerland}
 \email{\href{mailto: dominik.gruber@math.ethz.ch}{\texttt{dominik.gruber@math.ethz.ch}}}
\author{David Hume}
\address{Mathematical Institute\\ University of Oxford\\ Woodstock Road\\ Oxford Ox2 6GG, United~Kingdom}
\email{\href{mailto: david.hume@maths.ox.ac.uk}{\texttt{david.hume@maths.ox.ac.uk}}}
\subjclass[2010]{Primary: 20F65; Secondary: 20F67}
\keywords{Morse quasi-geodesic, contracting projection, superlinear divergence, geodesic image theorem}
\newcommand{\N}	{\mathbb N}
\newcommand{\Z}	{\mathbb Z}
\newcommand{\R}	{\mathbb R}
\newcommand{\Cay}	{\operatorname{Cay}}
\newcommand{\diam}	{\operatorname{diam}} 
\newcommand{\from}{\colon\thinspace}
\renewcommand{\setminus}{\smallsetminus}
\newcommand{\almost}{\epsilon}
\newcommand{\asympleq}{\preceq}
\newcommand{\asympgeq}{\succeq}
\newtheorem{theorem}{Theorem}[section] 
\newtheorem{proposition}{Proposition}[section] 
\newtheorem{lem}{Lemma} [section] 
\newtheorem{lemma}{Lemma}[section] 
\newtheorem{cor}{Corollary}[section] 
\newtheorem{corollary}{Corollary}[section] 
\newtheorem*{thm*} {Theorem} 
\newtheorem*{prop*}{Proposition}
\newtheorem*{lem*} {Lemma} 
\newtheorem*{cor*} {Corollary}
\theoremstyle{definition}
\newtheorem{defi}{Definition}[section] 
\newtheorem{definition}{Definition}[section] 
\newtheorem{example}{Example}[section]
\newtheorem*{defi*}{Definition}
\newtheorem*{example*}{Example}
\newtheorem*{remark*}{Remark}
\newtheorem*{problem*}{Problem}
\newtheorem*{convention*}{Convention}
\newtheorem*{defi1*}{Definition I of the WPD element}
\newtheorem*{defi2*}{Definition II of the WPD element}
\let\c@question=\c@thm 
\let\c@theorem=\c@thm 
\let\c@proposition=\c@thm 
\let\c@corollary=\c@thm 
\let\c@definition=\c@thm 
\let\c@lemma=\c@thm 
\let\c@lem=\c@thm 
\let\c@prop=\c@thm 
\let\c@cor=\c@thm 
\let\c@defi=\c@thm 
\let\c@example=\c@thm 
\let\c@remark=\c@thm 
\def\makeautorefname#1#2{\expandafter\def\csname#1autorefname\endcsname{#2}}
\let\fullref\autoref
\begin{document}
\begin{abstract}
We introduce and begin a systematic study of sublinearly contracting
projections.

We give two characterizations of Morse quasi-geodesics in an arbitrary geodesic
metric space. One is that they are sublinearly contracting; the other
is that they have completely superlinear divergence.

We give a further characterization of sublinearly contracting projections in terms
of projections of geodesic segments.
\end{abstract}
\maketitle

\section{Introduction}
This paper initiates a systematic study of \emph{contracting projections}.
The aim is to clarify and quantify ways in which a subspace of a geodesic metric space can `behave like' a convex subspace of a hyperbolic space. 

The definition of hyperbolicity captures the notion that a space is
uniformly negatively curved on all sufficiently large scales. 
Following Gromov's seminal paper \cite{Gro87}, hyperbolic groups and spaces have been intensively studied and many generalizations of this notion have been considered.

One particular collection of ideas focus on finding `hyperbolic directions', geodesics that have some of the features exhibited by geodesics in hyperbolic spaces, for instance, those that satisfy the \emph{Morse lemma}, have \emph{superlinear divergence} or satisfy some \emph{contraction} hypothesis.
These ideas find application to Mostow rigidity in rank 1 \cite{Pau96}, the Rank Rigidity Conjecture for
CAT(0) spaces \cite{BalBri95,BesFuj09,CapFuj10}, and hyperbolicity of the curve complex of a hyperbolic
surface \cite{Min96,MasMin99}.
Recently, the concept of \emph{strongly contracting projection} has
been a topic of intense interest in relation to mapping class groups
and outer automorphisms of free groups \cite{Alg11,BesBroFuj15},
acylindrically hyperbolic groups \cite{DahGuiOsi11,Osi16}, and contracting/Morse boundaries \cite{Sul12,Sul14,ChaSul15,Cor15,Mur15}.

We introduce a more general version of contracting projection than has
been previously studied.
Our main result is that this new version of contraction is equivalent
to the Morse property and to a certain superlinear divergence
property.
We give
quantitative links between these various geometric properties. 
We also generalize several fundamental theorems about stronger versions
of contraction to our new, more general, context.

In this paper we establish fundamental results in a very
general setting, so that they will be broadly applicable. 
Indeed, the novel version of contracting projections we introduce here is
essential in a subsequent
paper \cite{ArzCasGrua}, in which  we explore the geometry of
finitely generated graphical small cancellation groups, a class that
includes the Gromov monster groups as notorious examples.
In that paper we engineer finitely generated groups with Cayley graphs
that mimic the surprising geometry of our examples from \fullref{sect:examples}.
In particular, the new spectrum of contracting behaviors in geodesic
metric spaces that we 
discover here does appear in the setting of Cayley graphs of
finitely generated groups.
We also, in \cite{ArzCasGrua}, use the equivalence between sublinear contraction and the Morse
property established here in \fullref{theorem:morse_equiv_contracting} to
characterize Morse geodesics in certain families of graphical small
cancellation groups.

Since the preprint version of this article appeared there have already
been other applications of our results, including work of Cordes and
Hume \cite{CorHum16} and Cashen and Mackay \cite{CasMac17} on Morse boundaries of finitely
generated groups and work of Aougab, Durham, and Taylor
\cite{AouDurTay16} on cocompact subgroups of mapping class groups and $\mathrm{Out}(F_n)$.

\medskip
We give detailed introductions to the three main geometric properties
in Sections \ref{sec:introcontracting}, \ref{intro:Morse},
and \ref{intro:divergence} and  make  precise statements of our
results in Sections \ref{intro:maintheorems} and \ref{intro:further}.

\subsection*{Acknowledgements}
The authors thank the referee for the careful reading and helpful
remarks, and the Isaac Newton Institute for 
  Mathematical Sciences for support and hospitality during the 
  program 
\textit{Non-positive curvature: group actions and cohomology} 
where  work on this paper was undertaken. 
This work is partially supported 
  by EPSRC Grant Number EP/K032208/1.

Goulnara N.\ Arzhantseva was partially supported by the ERC grant ``ANALYTIC'' 
  no.\ 259527 and a grant from the Simons Foundation.
Christopher H.\ Cashen was supported by the Austrian Science Fund
  (FWF):M1717-N25.
Dominik Gruber was supported by the 
Swiss National Science Foundation 
Professorship FN  PP00P2-144681/1.
David Hume was supported by the 
ERC grant no.\ 278469 
and the grant ANR-14-CE25-0004 ``GAMME''.

\subsection{Contracting projections}\label{sec:introcontracting}
Let $Y$ be a subspace of a geodesic metric space $X$, and let
$\almost\geqslant 0$.
The \emph{$\almost$--closest point projection of $X$ to $Y$} is
 the map $\pi^\almost_Y\from X \to 2^Y$ sending a point $x\in X$ to
the set:
\[
 \pi^\almost_Y(x) := \{y\in Y \mid d(x,y)\leqslant d(x,Y)+\almost\}\subset Y
\]
We do not assume the sets $\pi^\almost_Y(x)$ have uniformly bounded diameter.
Note that given any $x\in X$, $\emptyset\neq Y\subset X$, and $\almost>0$, the set
$\pi^\almost_Y(x)$ is non-empty.

\begin{definition}
The $\almost$--closest point projection $\pi_Y^\almost\from X\to
2^Y$ is \emph{$(\rho_1,\rho_2)$--contracting} if the following
conditions are satisfied.
\begin{itemize}
\item The empty set is not in the image of $\pi_Y^\almost$.
\item The functions\footnote{The term `function' always refers to a
    real valued function whose domain, unless otherwise noted, is the
    non-negative real numbers.} $\rho_1$ and $\rho_2$ are non-decreasing and
  eventually non-negative.
\item The function $\rho_1$ is unbounded and $\rho_1(r)\leqslant r$.
\item For all $x,\, x'\in X$, if $d(x,x')\leqslant \rho_1(d(x,Y))$ then:
  \[\diam \pi_Y^\almost(x)\cup\pi_Y^\almost(x')\leqslant\rho_2(d(x,Y))\]
\item $\lim_{r\to\infty}\frac{\rho_2(r)}{\rho_1(r)}=0$.
\end{itemize}
\end{definition}

We say that \emph{$Y$ is $(\rho_1,\rho_2)$--contracting} if there exists
$\almost\geqslant 0$ such that $\pi_Y^\almost$
is $(\rho_1,\rho_2)$--contracting, see \fullref{def:spacecontracting}.
We say a collection of subspaces $\{Y_i\}_{i\in\mathcal{I}}$ is
\emph{uniformly contracting} if there exist $\rho_1$ and $\rho_2$ such
that for all $i\in\mathcal{I}$, the subspace $Y_i$ is $(\rho_1,\rho_2)$--contracting.

The rough idea is that, asymptotically as $x$ gets far from $Y$, if
$B$ is a ball centered at $x$ and disjoint from $Y$
then the diameter of its projection is negligible compared to the
diameter of $B$.
More accurately, this is true at a specific scale --- when the radius of $B$ is
$\rho_1(d(x,Y))$.
We claim no finer control of the projection diameter when $B$ has
smaller radius. 

For a simple, but conceptually useful, example, consider a circle $X$
and an arc $Y\subset X$. 
Take $\rho_1(r):=r$, and let $\rho_2$ be the constant function whose
value is the distance between the endpoints of $Y$.
Then $\pi_Y^0$ is $(\rho_1,\rho_2)$--contracting.
There is a unique point $x\in X$ farthest from $Y$. 
The ball $B$ of radius $\rho_1(d(x,Y))$ about $x$ is all of
$X\setminus Y$, and $\pi_Y^0(B)=\pi_Y^0(x)$ has diameter
$\rho_2(d(x,Y))$.

The simplest example that is not $(\rho_1,\rho_2)$--contracting
for any choice of $\rho_1$ and $\rho_2$ is to take $X$ to be the
Euclidean plane and take $Y$ to be a geodesic. 
Then the diameter of $\pi_Y^0$ of any ball is equal to the diameter of
the ball, so we cannot satisfy $\lim_{r\to\infty}\frac{\rho_2(r)}{\rho_1(r)}=0$.

The simplest contracting example with $Y$ unbounded is to take $X$ to be a tree
and $Y$ to be an unbounded convex subspace. 
Then $\diam \pi_Y^0(B_{d(x,Y)}(x))=0$ for
every $x$, so $\pi_Y^0$ is $(r,0)$--contracting. 
In more general
$\delta$--hyperbolic spaces, $\almost$--closest point projection to a geodesic is $(r,D)$--contracting
for some $D$ depending only on $\delta$ and $\almost$. 
Such a case, when $\rho_1(r):=r$ and $\rho_2$ is bounded, is called
\emph{strongly contracting}.

Pseudo-Anosov axes in Teichm\"uller space are strongly contracting
\cite{Min96}, as are iwip axes in the Outer Space of the outer
automorphism group of a free group \cite{Alg11} and axes of rank 1
isometries of CAT(0) spaces \cite{BalBri95,BesFuj09}. 

We say that $\pi_Y^\almost$ is \emph{semi-strongly contracting} if it 
is $(\rho_1,\rho_2)$--contracting for $\rho_1(r):=r/2$ and $\rho_2$
bounded. 
Related notions have been considered in the context of the mapping class group of a hyperbolic surface \cite{MasMin99, Beh06,DucRaf09}.

We say that $\pi_Y^\almost$ is \emph{sublinearly contracting} if it 
is $(\rho_1,\rho_2)$--contracting for $\rho_1(r):=r$.
In this case the definition implies $\rho_2$ is a sublinear function,
see \fullref{def:sublinear}.
Similarly, $\pi_Y^\almost$ is \emph{logarithmically contracting} if
it is $(\rho_1,\rho_2)$--contracting for $\rho_1(r):=r$ and $\rho_2$ logarithmic.

A schematic diagram of different contracting behaviors is given in \fullref{fig:typesofcontraction}.
A wide range of examples are presented in \fullref{sect:examples}.

\begin{figure}[h]
\captionsetup[subfigure]{labelformat=empty}
  \centering
  \begin{subfigure}[h]{.32\textwidth}
    \centering
\includegraphics[width=.5\textwidth]{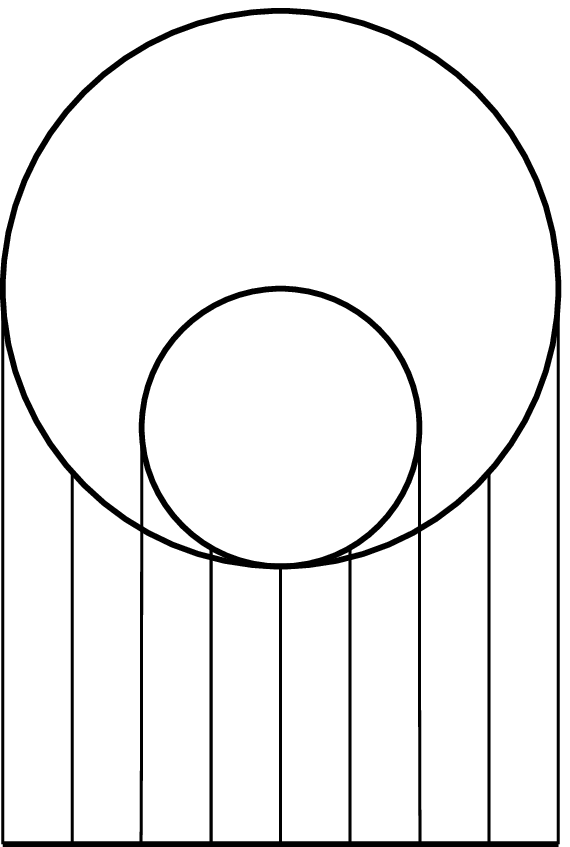}
\caption{Not contracting}
  \end{subfigure}
\hfill
\begin{subfigure}[h]{.32\textwidth}
    \centering
\includegraphics[width=.5\textwidth]{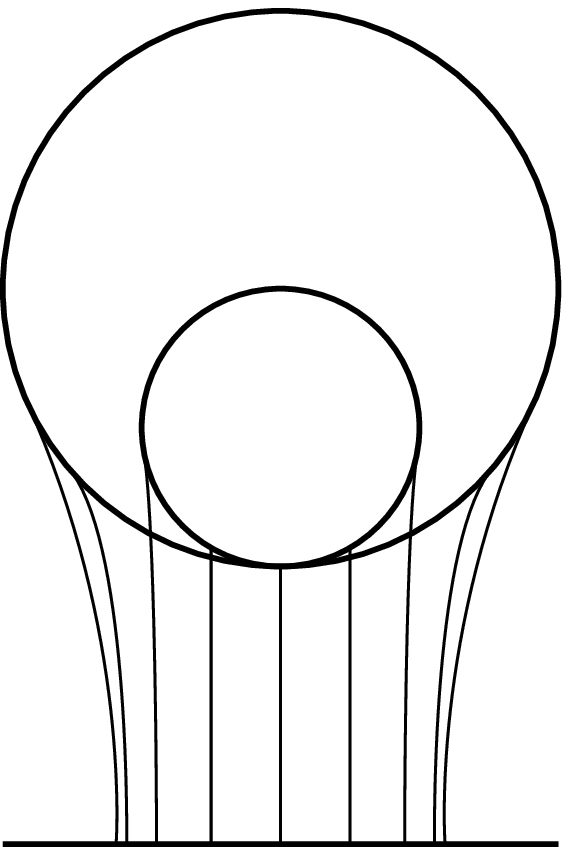}
\caption{Sublinearly contracting}
  \end{subfigure}
\hfill
\begin{subfigure}[h]{.32\textwidth}
    \centering
\includegraphics[width=.5\textwidth]{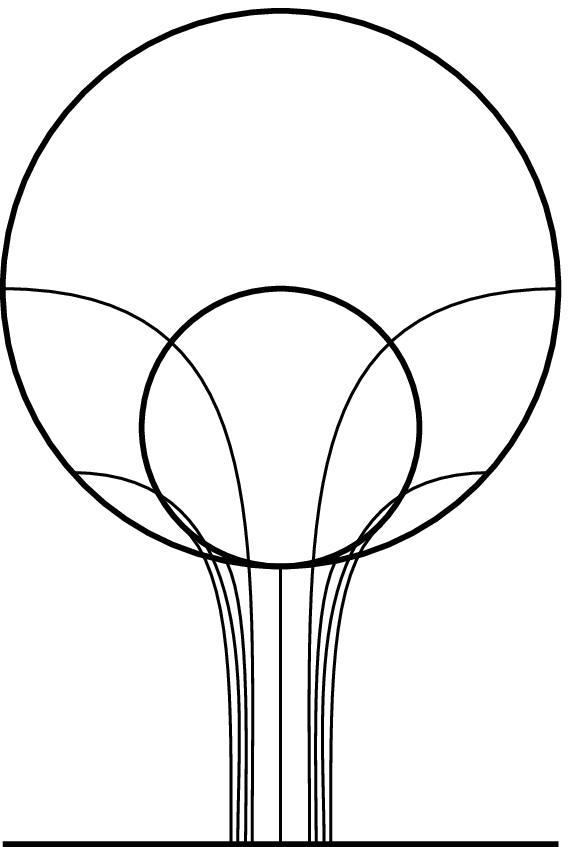}
\caption{Strongly contracting}
  \end{subfigure}
  \caption{Types of contraction}
  \label{fig:typesofcontraction}
\end{figure}

\subsection{The Morse property}\label{intro:Morse}
\begin{definition}
A subspace $Y$ of a geodesic metric space $X$ is \emph{$\mu$--Morse}
for a function $\mu\from [1,\infty)\times [0,\infty)\to [0,\infty)$ if
  for every $L\geqslant 1$ and $A\geqslant 0$, every
  $(L,A)$--quasi-geodesic $\gamma$ with endpoints on $Y$ remains
  within distance $\mu(L,A)$ of $Y$.

  The subspace $Y$ is called \emph{Morse}, or is said to \emph{have the
  Morse property}, if it $\mu$--Morse for some function $\mu$.
A collection of subspaces $\{Y_i\}_{i\in \mathcal{I}}$ is said to be \emph{uniformly
  Morse} if there exists a function $\mu$ such that for all
$i\in\mathcal{I}$ the subspace $Y_i$ is $\mu$--Morse.
\end{definition}

Morse quasi-geodesics have been intensively studied: 
they play a key role in boundary theory for hyperbolic and relatively
hyperbolic groups.
Recently, the Charney school \cite{Sul12,Sul14,ChaSul15,Cor15,Mur15} has been generalizing such boundary
theories to arbitrary proper geodesic metric spaces using the so called
`Morse boundary' consisting of asymptotic equivalence classes of Morse rays.

 Morse quasi-geodesics have been characterized\footnote{See also the
   related ``middle recurrence'' characterization of the Morse property
   in \cite{DruMozSap10}.} in terms of cut-points
 in asymptotic cones \cite{DruMozSap10}: a quasi-geodesic $q$ in $X$
 is Morse if and only if every point $x$ in the limit $\mathfrak{q}$ of $q$ in any asymptotic cone $\mathcal{C}$ of $X$ is a cut-point separating ends of $\mathfrak{q}$;
 that is, $\mathcal{C}\setminus\{x\}$ has at least two connected components containing points of $\mathfrak{q}$.
Cut-points in asymptotic cones are a key element of the proof of the quasi-isometry invariance of relatively hyperbolic (asymptotically tree-graded) spaces \cite{DruSap05}. 
It remains a very important open question to determine whether a space
in which every asymptotic cone admits a cut-point necessarily admits a
Morse quasi-geodesic. 

As a result, it is of great interest to find and classify Morse
quasi-geodesics.
If a solvable group admits a Morse quasi-geodesic then it is virtually 
cyclic, and the same holds for any other group satisfying a 
non-trivial law, for instance, a torsion group with bounded exponent
\cite{DruSap05}.
At the other extreme, every quasi-geodesic in a hyperbolic space is Morse.
There are non-trivial classifications of Morse quasi-geodesics for relatively hyperbolic groups
\cite{Osi06} and CAT($0$) spaces \cite{BalBri95,BesFuj09,Sul14}.
We use the tools of this paper to perform such a classification for
 graphical small cancellation groups in 
\cite{ArzCasGrua}.

\subsection{Divergence}\label{intro:divergence}
Closely related to the study of Morse quasi-geodesics is the notion of
\emph{divergence}.
The definition is technical, so we postpone it until \fullref{def:divergence}.
The idea is that the divergence of a quasi-geodesic $\gamma$ in a
space $X$ is a
function whose value at $r$ is the minimal length of a path in $X$
circumventing a ball of radius $r$ centered on $\gamma$.
In our version of divergence we allow the forbidden ball to be centered
at different points of $\gamma$ for different values of $r$. 
Some authors require the balls to have fixed center at $\gamma(0)$.

Morse geodesics were used to produce cut points in asymptotic cones. 
Divergence can be used to rule them out \cite{DruMozSap10}: if $G$ is a finitely
generated group then no asymptotic cone of $G$ admits a cut point if
and only if there exists a constant $K$ such that for any finite
geodesic $[a,b]$ with midpoint $c$, there is a path from $a$ to $b$
avoiding the ball centered at $c$ with radius $d(a,b)/4 - 2$ of length at most
$Kd(a,b)+K$. 
The interplay between divergence and Morse quasi-geodesics is explored in \cite{DruMozSap10} and \cite{BehDru14}.

Morally, for a quasi-geodesic $\gamma$ the Morse property and linear divergence
are opposites. 
The Morse property says good (quasi-geodesic) paths between points of $\gamma$ stay close to
$\gamma$, and linear divergence says it is easy for a path
between points of $\gamma$ to stray far from $\gamma$.
However, there are some subtleties.
There are groups that admit quasi-geodesics with superlinear
divergence, yet have an asymptotic cone with no cut point, and
therefore no Morse quasi-geodesics \cite{OlsOsiSap09}.
By construction, for each of these groups there is
an unbounded sequence $(r_n)$ such that the divergence is linear (it
satisfies the above conditions for a fixed $K$) whenever
$d(a,b)=r_n$ for some $n$.
We say a geodesic metric space has
\emph{completely superlinear divergence} if no such unbounded
sequence exists.
We show in \fullref{theorem:morse_equiv_csld} that this is the precise divergence property that characterizes
Morse quasi-geodesics.

\subsection{Main theorems}\label{intro:maintheorems}
Restricted to quasi-geodesics, our main results say:
\begin{theorem}
  Let $X$ be a geodesic metric space. Let $\gamma$ be a quasi-geodesic
  in $X$. 
The following are equivalent:
\begin{enumerate}
\item $\gamma$ is sublinearly contracting.
\item $\gamma$ is Morse.
\item $\gamma$ has completely superlinear divergence.
\end{enumerate}
\end{theorem}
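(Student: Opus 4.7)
The plan is to prove the cycle $(1) \Rightarrow (2) \Rightarrow (3) \Rightarrow (1)$.

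For $(1) \Rightarrow (2)$, I use a direct projection-accounting argument. Assume $\gamma$ is $(\rho_1, \rho_2)$--contracting with $\rho_1(r) = r$ and $\rho_2$ sublinear, and let $\alpha$ be an $(L, A)$--quasi-geodesic with endpoints on $\gamma$. Set $d^* := \sup_{x \in \alpha} d(x, \gamma)$. Subdivide $\alpha$ into a chain $x_0, \ldots, x_N$ with consecutive spacing at most $\rho_1(d^*) = d^*$; the contracting hypothesis applied pairwise gives $\diam \pi_\gamma^\almost(x_i) \cup \pi_\gamma^\almost(x_{i+1}) \leqslant \rho_2(d^*)$, so the total projection displacement along $\alpha$ is at most $N \rho_2(d^*)$, with $N \leqslant \mathrm{length}(\alpha)/d^* \leqslant (L d(\alpha_-, \alpha_+) + A)/d^*$. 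Since $\alpha_\pm \in \gamma$, the projection displacement is at least $d(\alpha_-, \alpha_+) - 2\almost$. Combining these inequalities with $\rho_2(r)/r \to 0$ bounds $d^* \leqslant \mu(L, A)$ for a function $\mu$ determined by $\rho_1, \rho_2, \almost$, which is the Morse condition.

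For $(2) \Rightarrow (3)$, I argue via the asymptotic-cone characterization of Morse quasi-geodesics recalled in \fullref{intro:Morse}: if $\gamma$ is Morse, every point of its image $\mathfrak{q}$ in any asymptotic cone $\mathcal{C}$ is a cut-point of $\mathcal{C}$ separating the two ends of $\mathfrak{q}$. Contrapositively, suppose divergence is not completely superlinear: there are a constant $K$, a sequence $r_n \to \infty$, points $a_n, b_n, c_n \in \gamma$ with $c_n$ at $\gamma$--distance $\geqslant r_n$ from both $a_n$ and $b_n$, and paths $p_n$ from $a_n$ to $b_n$ of length $\leqslant K d(a_n, b_n) + K$ avoiding $B_{r_n}(c_n)$. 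Since $\gamma$ is a quasi-geodesic, $d(a_n, b_n)$ and $\mathrm{length}(p_n)$ are both of order $r_n$. Taking an asymptotic cone with scaling factors $1/r_n$, the rescaled $p_n$ converge to a rectifiable path $\bar p$ of bounded length in $\mathcal{C}$ avoiding the image $\bar c$ of $c_n$, connecting two points of $\mathfrak{q}$ on opposite sides of $\bar c$. This contradicts $\bar c$ being a cut-point separating ends of $\mathfrak{q}$.

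For $(3) \Rightarrow (1)$, I again use the contrapositive. Suppose $\gamma$ is not sublinearly contracting: there exist $c > 0$ and pairs $(x_n, x_n')$ with $r_n := d(x_n, \gamma) \to \infty$, $d(x_n, x_n') \leqslant r_n$, and $\diam \pi_\gamma^\almost(x_n) \cup \pi_\gamma^\almost(x_n') \geqslant c r_n$. Concatenating geodesic segments $[\pi_\gamma^\almost(x_n), x_n]$, $[x_n, x_n']$, $[x_n', \pi_\gamma^\almost(x_n')]$ produces a path $q_n$ of length at most $4 r_n + O(\almost)$ with endpoints on $\gamma$ at mutual $\gamma$--distance $\geqslant c r_n$. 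One picks as ball center $m_n$ a point of $\gamma$ roughly midway between $\pi_\gamma^\almost(x_n)$ and $\pi_\gamma^\almost(x_n')$ and shows $q_n$ avoids $B_{\Omega(r_n)}(m_n)$: any point of $q_n$ too close to $m_n$ would, upon projection, force $\pi_\gamma^\almost(x_n)$ and $\pi_\gamma^\almost(x_n')$ to be much closer than $c r_n$, contradicting the hypothesis. This exhibits linear divergence at scale $\Omega(r_n)$ along an unbounded sequence, violating (3).

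The main obstacle is step $(3) \Rightarrow (1)$: the middle segment $[x_n, x_n']$ of $q_n$ can in principle wander close to $\gamma$, so identifying a ball-center $m_n$ on $\gamma$ that $q_n$ avoids by a distance growing with $r_n$ requires a careful analysis of the projection behavior of $q_n$, intertwining the failure of sublinear contraction with the quasi-geodesic structure of $\gamma$.
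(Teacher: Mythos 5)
Your cycle is structurally fine, and your strategies for $(1)\Rightarrow(2)$ and $(3)\Rightarrow(1)$ are essentially the paper's, but both contain genuine gaps. In $(1)\Rightarrow(2)$ the pairwise application of the contraction condition is not licensed: the definition only bounds $\diam \pi_\gamma^\almost(x_i)\cup\pi_\gamma^\almost(x_{i+1})$ when $d(x_i,x_{i+1})\leqslant \rho_1(d(x_i,\gamma))=d(x_i,\gamma)$, whereas you use the uniform spacing $d^*=\sup_{x\in\alpha}d(x,\gamma)$, which exceeds $d(x_i,\gamma)$ at every $x_i$ that is not a farthest point --- in particular near the endpoints of $\alpha$, which lie on $\gamma$. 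For such pairs the hypothesis gives no bound at all. The paper's \fullref{prop:basiccontractionimpliesmorse} avoids this by first replacing $\alpha$ with a continuous quasi-geodesic (also needed for ``$\mathrm{length}(\alpha)$'' to make sense), restricting to a maximal excursion of $\alpha$ outside an $E$-neighborhood of $\gamma$, and using the adaptive step $d(\gamma(t_i),\gamma(t_{i+1}))=\rho_1(d(\gamma(t_i),\gamma))$; the excursion's endpoints are then only at distance about $E$ from $\gamma$, which changes the bookkeeping but yields the same conclusion. Your argument needs this repair.

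In $(3)\Rightarrow(1)$, the step you yourself flag as ``the main obstacle'' is precisely the content of \fullref{prop:csld_implies_sublinear_contraction}, and it is not resolved by a projection argument. The paper writes $x'_n=\gamma(a_n-b_n)$, $y'_n=\gamma(a_n+b_n)$, $m_n=\gamma(a_n)$, notes $d(m_n,\{x'_n,y'_n\})\geqslant R_n:=b_n/L-A\to\infty$, and shows by a direct triangle-inequality computation --- crucially exploiting $d(x_n,y_n)\leqslant d(x_n,\gamma)$ and the parametrization of $\gamma$, not projections of points of $q_n$ --- that if any point $z$ of the concatenation had $d(z,m_n)\leqslant\lambda R_n$ then $R_n\leqslant\almost/(1-4\lambda)$, contradicting $R_n\to\infty$. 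Your heuristic (that a point of $q_n$ near $m_n$ would ``upon projection'' force the projections of $x_n$ and $x_n'$ together) presupposes control over projections of points of $q_n$, which is exactly what is not yet available; as written the implication is not established. Finally, your $(2)\Rightarrow(3)$ via the Dru{\c{t}}u--Mozes--Sapir cut-point characterization and an ultralimit of the rescaled paths $p_n$ is a legitimate alternative to the paper's direct argument (\fullref{lem:morseimpliescsld}, which straightens the $p_n$ into uniform quasi-geodesics via \fullref{lem:constructingquasigeods}), but it imports the asymptotic-cone machinery the paper deliberately avoids and forfeits the quantitative bound on the Morse function in terms of the divergence function asserted in \fullref{theorem:morse_equiv_csld}.
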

Special cases of this theorem have appeared before. 
If $X$ is hyperbolic then these conditions are well-known properties
of arbitrary quasi-geodesics, and conditions (1) and (3) can be
strengthened to `strongly contracting' and `at least exponential divergence',
respectively. 
If $X$ is CAT(0) and $\gamma$ is a geodesic then this is a recent theorem of
Charney and Sultan \cite{ChaSul15}. 
In that case, conditions (1) and (3) can be strengthened to `strongly
contracting' and `at least quadratic divergence', respectively.
Our theorem establishes these equivalences in full generality.

The Morse and contraction properties make sense for \emph{subspaces} of $X$,
not just quasi-geodesics. Our main theorem is:

\begin{theorem}\label{theorem:morse_equiv_contracting} Let $Y$ be a
  subspace of a geodesic metric space $X$. Let $\almost\geqslant 0$ be such
  that $\pi_Y^\almost$ does not contain the empty set in its image.
The following are equivalent:
\begin{enumerate}
\item There exists $\mu\from [1,\infty)\times [0,\infty)\to [0,\infty)$ such that $Y$ is $\mu$--Morse.\label{item:Morse}
\item There exists $\mu'\from [1,\infty)\to [0,\infty)$ such that every continuous $(L,0)$--quasi-geodesic with endpoints on $Y$ remains in the $\mu'(L)$-neighborhood of $Y$.\label{item:continuousbiLip}
\item There exists $\rho$ such that $\pi_Y^\almost$ is $(r,\rho)$--contracting.\label{item:sublinearcontraction}
\item There exist $\rho_1$ and $\rho_2$ such that $\pi_Y^\almost$ is $(\rho_1,\rho_2)$--contracting.\label{item:contraction}
\end{enumerate}
Moreover, in each implication we bound the parameters of the
conclusion in terms of the parameters of the hypothesis, independent
of $Y$.
\end{theorem}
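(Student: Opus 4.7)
The plan is to close the cycle of implications $(1) \Rightarrow (2) \Rightarrow (3) \Rightarrow (4) \Rightarrow (1)$. Two of these are immediate: $(1) \Rightarrow (2)$ is obtained by restricting the Morse function to $A = 0$, so that $\mu'(L) := \mu(L, 0)$ works; and $(3) \Rightarrow (4)$ is tautological, since $(r, \rho)$-contracting is precisely the special case $\rho_1(r) := r$ of $(\rho_1, \rho_2)$-contracting. The content is therefore concentrated in $(2) \Rightarrow (3)$ and $(4) \Rightarrow (1)$.

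For $(2) \Rightarrow (3)$, I would fix $x, x' \in X$ with $d(x, x') \leq r := d(x, Y)$ and choose $y \in \pi_Y^\almost(x)$, $y' \in \pi_Y^\almost(x')$. Concatenating geodesic segments $[y, x]$, $[x, x']$, $[x', y']$ yields a continuous path $\sigma$ from $y$ to $y'$ through $x$ of length at most $4r + 2\almost$. The plan is to verify that, after reparameterization by arc length, $\sigma$ is a continuous $(L, 0)$-quasi-geodesic with $L$ comparable to $(4r + 2\almost)/d(y, y')$. The key input is the observation that $y \in \pi_Y^\almost(z)$ for every $z \in [y, x]$ (otherwise one could exhibit a point of $Y$ strictly closer to $x$ than $y$), and symmetrically $y' \in \pi_Y^\almost(z')$ for every $z' \in [x', y']$; this precludes severe backtracking of $\sigma$ and yields the required lower bound on $d(\sigma(s), \sigma(t))/|s - t|$. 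Hypothesis $(2)$ then forces $r = d(x, Y) \leq \mu'(L)$, and solving for $d(y, y')$ gives a sublinear bound of the form $d(y, y') \leq (4r + 2\almost)/(\mu')^{-1}(r)$; taking suprema over $y \in \pi_Y^\almost(x)$ and $y' \in \pi_Y^\almost(x')$ (including the diagonal case $x' = x$, which handles $\diam \pi_Y^\almost(x)$) produces a sublinear function $\rho$ witnessing $(r, \rho)$-contraction.

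For $(4) \Rightarrow (1)$, let $\gamma$ be an $(L, A)$-quasi-geodesic from $y_0 \in Y$ to $y_1 \in Y$, and suppose $x = \gamma(t_0)$ satisfies $d(x, Y) = R$. Let $s_-, s_+$ be the last time before $t_0$ and the first time after $t_0$ at which $d(\gamma(s_\pm), Y) = R/2$; then $\gamma[s_-, s_+]$ is contained in $\{z : d(z, Y) \geq R/2\}$. The plan is to choose sample points $s_- = s_0 < s_1 < \cdots < s_n = s_+$ with consecutive images at most $\rho_1(d(\gamma(s_i), Y))$ apart, apply the $(\rho_1, \rho_2)$-contraction hypothesis to each consecutive pair to chain the projections, and sum the $\rho_2$-contributions to bound $\diam \pi_Y^\almost(\gamma[s_-, s_+])$. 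Combining this with the triangle inequality $d(\gamma(s_-), \gamma(s_+)) \leq R + 2\almost + \diam \pi_Y^\almost(\gamma[s_-, s_+])$ and the quasi-geodesic lower bound $(s_+ - s_-)/L - A \leq d(\gamma(s_-), \gamma(s_+))$, the condition $\rho_2(r)/\rho_1(r) \to 0$ ensures that, for $R$ large, the projection term absorbs into a strict fraction of $(s_+ - s_-)/L$, producing a self-consistent inequality that bounds $R$ in terms of $L$, $A$, and $\almost$. Tracking constants at each step yields the uniform bounds asserted in the moreover clause.

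The main obstacle is the bookkeeping in $(4) \Rightarrow (1)$: the $\rho_2$-contribution per step depends on the local distance $d(\gamma(s_i), Y)$, which along $\gamma[s_-, s_+]$ is bounded below by $R/2$ but may grow as large as $R/2 + L(s_+ - s_-) + A$, so a uniform bound $\rho_2(R/2)$ per piece is not available. The resolution is to exploit the sublinearity of the ratio $\rho_2/\rho_1$ at large scales to absorb the total projection contribution into the left-hand side of the master inequality, rather than relying on a crude per-piece bound. A secondary subtlety in $(2) \Rightarrow (3)$ is that the concatenated path $\sigma$ could in principle self-intersect where $[y, x]$ or $[x', y']$ return near each other; this is controlled precisely by the near-optimality of $y$ and $y'$ as projections, and in the extreme case one replaces $\sigma$ by a trimmed continuous quasi-geodesic still passing sufficiently close to $x$ to deduce the same bound on $d(y, y')$.
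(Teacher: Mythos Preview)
Your cycle $(1)\Rightarrow(2)\Rightarrow(3)\Rightarrow(4)\Rightarrow(1)$ and your argument for $(4)\Rightarrow(1)$ are essentially the paper's approach (\fullref{prop:basiccontractionimpliesmorse}): chain projections along a maximal excursion of the quasi-geodesic outside a large neighborhood of $Y$, compare the resulting projection-diameter estimate against the quasi-geodesic lower bound on length, and use $\rho_2/\rho_1\to 0$ to close the inequality. You should add one step you omitted: the sampling argument needs $\gamma$ continuous so that the points $s_i$ (and your $s_\pm$) exist, and the paper first replaces an arbitrary $(L',A')$--quasi-geodesic by a Hausdorff-close continuous $(L,A)$--quasi-geodesic.

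There is, however, a genuine gap in your $(2)\Rightarrow(3)$. The concatenation $\sigma=[y,x][x,x'][x',y']$ is \emph{not} in general an $(L,0)$--quasi-geodesic for any $L$ comparable to $|\sigma|/d(y,y')$, and your ``key input'' that $y\in\pi_Y^\almost(z)$ for $z\in[y,x]$ does not prevent this. Take $X$ a metric tree: an edge $[y,m]$ of length $1$, an edge $[y',m]$ of length $1$, an edge $[m,p]$ of length $10$, and edges $[p,x],[p,x']$ of length $1$. Let $Y=\{y,y'\}$ and $\almost=0$. Then $d(x,Y)=12$, $d(x,x')=2\leqslant 12$, and $\pi_Y^0(x)=\pi_Y^0(x')=\{y,y'\}$, so one may choose the projections to be $y$ and $y'$. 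The concatenation $\sigma$ passes through $m$ twice, at arc-length positions $1$ and $25$, hence is not an $(L,0)$--quasi-geodesic for any finite $L$; and the observation $y\in\pi_Y^0(z)$ is satisfied at every $z\in[y,x]$ yet says nothing about $d(z,z')$ for $z'\in[x',y']$.

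Your fallback---trim $\sigma$ to a quasi-geodesic ``still passing sufficiently close to $x$''---also fails in this example: the natural shortcut collapses the entire excursion past $m$, and the resulting quasi-geodesic is simply the geodesic $[y,y']$, which stays at distance $11$ from $x$. Applying the Morse hypothesis to it gives no bound on $d(x,Y)$. This is precisely why the paper's proof of \fullref{prop:morseimpliessublinearcontraction} is organized as it is: one first shows $\sigma$ can be shortcut to an honest $(L,0)$--quasi-geodesic $\gamma'$, and then runs a \emph{case analysis} according to where the shortcuts land. In some cases $x\in\gamma'$ and your argument goes through; but in the case where a shortcut jumps from $[y,x]$ to $[x',y']$ (the tree example above) one instead bounds $d(y,y')$ using the Morse bound on the \emph{endpoints} of the shortcut together with $d(x',y')\leqslant|\gamma|/L$, and in the remaining case one uses the hypothesis $d(x,x')\leqslant d(x,Y)$ to control the projection of a surviving point on $[x,x']$. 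These cases are not cosmetic: each requires a different inequality, and your sketch does not supply them.
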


Divergence, on the other hand, is specialized to quasi-geodesics.

\begin{theorem}\label{theorem:morse_equiv_csld}
Let $\gamma$ be a quasi-geodesic in a geodesic metric space $X$.
The following are equivalent:
\begin{enumerate}
\item $\gamma$ is Morse. \label{item:morse_qgeodesic}
\item $\gamma$ has completely superlinear divergence. \label{item:csld}
\end{enumerate}
Moreover, the Morse function can be bounded in terms of the divergence
function, independent of $\gamma$.
\end{theorem}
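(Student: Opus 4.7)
The plan is to route both implications through \fullref{theorem:morse_equiv_contracting}, which replaces the Morse property with sublinear contraction of the closest-point projection onto $\gamma$. This reformulation lets me convert statements about detouring paths into projection estimates, which is how the contraction hypothesis interacts with length.

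For $(\ref{item:morse_qgeodesic}) \Rightarrow (\ref{item:csld})$, I assume $\gamma$ is $\mu$--Morse. By \fullref{theorem:morse_equiv_contracting} there exist $\almost \geqslant 0$ and a sublinear function $\rho$ such that $\pi_\gamma^\almost$ is $(r,\rho)$--contracting. Supposing toward contradiction that divergence is not completely superlinear, there exist $K > 0$ and an unbounded sequence $r_n \to \infty$ of scales with paths $p_n$ from $a_n, b_n \in \gamma$ of length at most $K r_n$, with $d(a_n, b_n) \geqslant r_n$, avoiding $B_{r_n/4 - 2}(c_n)$ for some $c_n \in \gamma$ between $a_n$ and $b_n$. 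Because $a_n, b_n \in \gamma$, the diameter of $\pi_\gamma^\almost(a_n) \cup \pi_\gamma^\almost(b_n)$ is at least $r_n - 2\almost$. I split $p_n$ into its portion inside an $R$--neighborhood of $\gamma$ (the \emph{near} part) and its complement (the \emph{far} part). Any near point, being within $R$ of $\gamma$ while lying outside $B_{r_n/4 - 2}(c_n)$, projects into the two subarcs of $\gamma$ at distance at least $r_n/4 - R - O(\almost)$ from $c_n$; so the near portion's projection is confined to two arcs of total length at most $r_n/2 + O(R)$, and the far portion's projection must cover the remaining middle arc of length at least $r_n/2 - O(R)$. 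An adaptive subdivision of the far part with step size $\rho_1(d(x, \gamma)) = d(x, \gamma)$ bounds the rate of projection growth per unit arc length by $\sup_{s \geqslant R} \rho(s)/s$, which tends to $0$ as $R \to \infty$ by sublinearity. Taking $R$ large enough that this supremum is smaller than $1/(3K)$, and $r_n$ large enough that $r_n/2 - O(R) > r_n/3$, forces $L(p_n) > K r_n$, the desired contradiction.

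For $(\ref{item:csld}) \Rightarrow (\ref{item:morse_qgeodesic})$, I prove the contrapositive. If $\gamma$ is not Morse, then by \fullref{theorem:morse_equiv_contracting} it is not sublinearly contracting; negating the definition with $\rho_1(r) = r$ yields $\epsilon_0 \in (0,1]$ and sequences $x_n, x_n' \in X$ with $d(x_n, \gamma) \to \infty$, $d(x_n, x_n') \leqslant d(x_n, \gamma)$, and $\diam \pi_\gamma^\almost(x_n) \cup \pi_\gamma^\almost(x_n') \geqslant \epsilon_0 d(x_n, \gamma)$. Picking realizers $\pi_n \in \pi_\gamma^\almost(x_n)$, $\pi_n' \in \pi_\gamma^\almost(x_n')$, set $D_n := d(\pi_n, \pi_n')$ and let $c_n$ be a point of $\gamma$ roughly midway between them. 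The concatenation of geodesics $\pi_n \to x_n$, $x_n \to x_n'$, and $x_n' \to \pi_n'$ has length at most $4 d(x_n, \gamma) + 2\almost \leqslant (4/\epsilon_0) D_n + O(\almost)$. A triangle-inequality check using $d(x_n, c_n) \geqslant d(x_n, \gamma) \geqslant D_n/\epsilon_0$ verifies that each segment stays at distance at least $D_n/4 - O(\almost)$ from $c_n$. This exhibits paths realizing linear divergence at the unbounded sequence of scales $D_n$, contradicting complete superlinearity. Tracking the constants $\epsilon_0, \almost$, together with the quantitative bounds of \fullref{theorem:morse_equiv_contracting}, gives the ``moreover'' clause, since a bound on the divergence function forces a bound on $\epsilon_0$ and hence on the Morse function.

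The main obstacle is the quantitative projection estimate in the first implication, specifically converting the geometric confinement of the near portion of $p_n$ into a \emph{super}linear rather than merely linear lower bound on $L(p_n)$. A subtle subcase is that $\pi_\gamma^\almost$ applied along $p_n$ may ``jump'' over the middle arc of $\gamma$ rather than continuously covering it: for two consecutive subdivision points $x_i, x_{i+1}$ whose projections lie on opposite sides of $c_n$, the inequality $\rho(d(x_i, \gamma)) \geqslant r_n/2 - O(R)$ would force $d(x_i, \gamma) \geqslant \rho^{-1}(r_n/2 - O(R))$, which by sublinearity of $\rho$ grows strictly faster than $r_n$, so the bound $L(p_n) \geqslant 2 d(x_i, \gamma)$ already delivers the required superlinear growth in this case.
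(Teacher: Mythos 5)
Your argument is correct in outline, and the two directions deserve separate comments. For $(\ref{item:csld})\Rightarrow(\ref{item:morse_qgeodesic})$ your contrapositive is, modulo phrasing, exactly the paper's \fullref{prop:csld_implies_sublinear_contraction}: failure of $(r,\rho)$--contraction yields points $x_n,x_n'$ with far-apart projections, and the concatenation $[\pi_n,x_n][x_n,x_n'][x_n',\pi_n']$ is a linearly short detour around $c_n$. (Minor slip: from $\diam\pi_\gamma^\almost(x_n)\cup\pi_\gamma^\almost(x_n')\geqslant\epsilon_0 d(x_n,\gamma)$ you get $d(x_n,\gamma)\leqslant D_n/\epsilon_0$, not $\geqslant$; this is harmless because the ball-avoidance computation only uses $d(x_n,c_n)\geqslant d(x_n,\gamma)$ and $d(x_n,x_n')\leqslant d(x_n,\gamma)$, as in the paper.) For $(\ref{item:morse_qgeodesic})\Rightarrow(\ref{item:csld})$ your route is genuinely different: the paper's \fullref{lem:morseimpliescsld} never invokes contraction in this direction, but instead discretizes the short detours $p_n$ and iterates the surgery of \fullref{lem:constructingquasigeods} roughly $\log_3 8CL$ times to produce quasi-geodesics with \emph{uniform} constants still avoiding balls of radius $\asymp r_n$, then applies the Morse property to those directly. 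You instead lower-bound the length of any detour by the projection displacement it must achieve divided by $\sup_{s\geqslant R}\rho(s)/s$. Your approach buys quantitative control --- an explicit lower bound on divergence in terms of the contraction function, in the spirit of the paper's Proposition 5.7, whereas the paper explicitly remarks that its argument gives no such control in this direction. The price is bookkeeping you only partially address: besides the jump across the middle arc within a single far step (which you handle via $\rho^{-1}$), you must rule out jumps within the near portion (this works because for near points either the contraction condition at scale $\leqslant R$ or pinning of the projection bounds each jump by $\rho(R)+O(\almost)$), and you must control the leftover partial subdivision steps at the ends of the far excursions, whose number is not a priori bounded; a two-threshold near/far decomposition fixes this, but it does not come for free.
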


We mention a further characterization of Morse quasi-geodesics: It can
be shown fairly easily that a quasi-geodesic $\gamma\from I\to X$ is
Morse if and only if the collection of its subsegments $\{\gamma_J\mid
J \text{ is a subinterval of } I\}$ is uniformly Morse. Moreover, the
Morse functions for $\gamma$ and for the subsegments can be bounded in
terms of one another and the quasi-geodesic constants of $\gamma$.
The quantitative nature of the equivalences in
\fullref{theorem:morse_equiv_contracting} then implies that $\gamma$
is Morse if and only if the collection of its subsegments is uniformly
contracting.

\subsection{Further applications}\label{intro:further}
We consider several important theorems about strongly contracting
projections that have appeared in the literature, and generalize them
by proving sublinear
analogues.

The first of these results is the `Bounded Geodesic Image
Property', cf \cite{MasMin00, BesFuj09}. 
This says that if
$\pi_Y^\almost$ is strongly contracting then there exist
constants $A$ and $B$ such that if $\gamma$ is a geodesic segment with
$d(\gamma,Y)\geqslant A$, then $\diam \pi^\almost_Y(\gamma)\leqslant B$.
In fact, this property is equivalent to strong contraction.
We prove, in \fullref{thm:git}, that $\pi_Y^\almost$ is
$(r,\rho)$--contracting if and only if there exist a constant $A$ and
a function $\rho'\asymp\rho$ such that if $\gamma$ is a geodesic segment
with $d(\gamma,Y)\geqslant A$ then 
\[\diam
\pi_Y^\almost(\gamma)\leqslant\rho'(\max\{d(x,Y),d(x',Y)\}),\] where
$x$ and $x'$ are the endpoints of $\gamma$.

The second strong contraction result is one of the `Projection Axioms' of
Bestvina, Bromberg, and Fujiwara \cite{BesBroFuj15}.
It says that if $\pi_Y^\almost$ and
$\pi_{Y'}^{\almost'}$ are both strongly contracting, and if $Y$ and
$Y'$ are sufficiently far apart, then $\diam \pi_Y^\almost(Y')$ and
$\diam \pi_{Y'}^{\almost'}(Y)$ are bounded in terms of the contraction
constants. 
In \fullref{prop:sublinearprojection} we prove that if `strongly
contracting' is weakened to `$(r,\rho)$--contracting' then $\diam \pi_Y^\almost(Y')$ and
$\diam \pi_{Y'}^{\almost'}(Y)$ are bounded by an affine function of $\rho(d(Y,Y'))$. 
This is the best that can be expected, since even for a single point
$x$ we can only conclude $\diam\pi_Y^\almost(x)\leqslant \rho(d(x,Y))$.

Finally, a theorem of Masur and Minsky \cite{MasMin99} says,
approximately and in our language, that if for
every pair of points in a geodesic metric space $X$ there exists a path between them such that these
paths all admit semi-strongly contracting projections, with
contraction constants uniform over the family of paths, then the space
$X$ is hyperbolic.
Our \fullref{corollary:uniformcontractionimplieshyperbolic} says the
conclusion still holds if `semi-strongly contracting' is weakened to
`sublinearly contracting'.

\subsection{Robustness}
In \fullref{sec:robust} we investigate the following question:
Let $\pi_Y^\almost$ be $(\rho_1,\rho_2)$--contracting. 
What effect does changing $\rho_1$, $\almost$, or $Y$ have on this
property, in terms of $\rho_2$?

We obtain optimal answers when $\rho_1(r)=r$, see \fullref{lem:almost} and \fullref{lem:strcontbddHdist}.
It would be interesting to have good quantitative results in more
general cases.

The Morse property is invariant under quasi-isometry, so, by
\fullref{theorem:morse_equiv_csld}, the property of being sublinearly
contracting is also a quasi-isometry invariant. 
Very little is known, however, about how the contraction parameters
vary under quasi-isometry. 
In a subsequent paper \cite{ArzCasGrua} we demonstrate that strong
contraction is not preserved by quasi-isometries.

\section{Preliminaries}\label{sec:prelim}
Let $N_r(y):=\{x\in X\mid d(x,y)<r\}$ and $\overline{N}_r(y):=\{x\in
X\mid d(x,y)\leqslant r\}$. If $Y$ is a subspace of $X$, let
$N_r(Y):=\cup_{y\in Y} N_r(y)$, and $\overline{N}_r(Y):=\cup_{y\in Y}\overline{N}_r(y)$.

Let $\diam Y:=\sup \{d(y,y')\mid y,\,y'\in Y\}$.

A \emph{geodesic} is an isometric embedding of an interval.
A metric space $X$ is \emph{geodesic} if for every pair of points
$x,x'\in X$ there exists a geodesic connecting them.

The \emph{Hausdorff distance} between non-empty subspaces $Y$ and $Z$ of
$X$ is the infimal $C$ such that $Y\subset \overline{N}_C(Z)$ and $Z\subset\overline{N}_C(Y)$.
Two subspaces are \emph{$C$--Hausdorff equivalent} if
the Hausdorff distance between them is at most $C$.

Given $L\geqslant 1$ and $A\geqslant 0$, a map $\phi\from X\to Y$ between metric spaces is an
\emph{$(L,A)$--quasi-isometric embedding} if
$\frac{1}{L}d(x,x')-A\leqslant
d(\phi(x),\phi(x'))\leqslant L d(x,x')+A$ for every $x,\,x'\in X$.
It is an \emph{$(L,A)$--quasi-isometry} if, in addition, $Y=\overline{N}_A(\phi(X))$.

An \emph{$(L,A)$--quasi-geodesic} is an $(L,A)$--quasi-isometric
embedding of an interval.

\begin{definition}\label{def:sublinear}
A function $f$ is \emph{sublinear} if it is non-decreasing,
eventually non-negative, and $\lim_{r\to\infty}\frac{f(r)}{r}=0$.
\end{definition}

We write $f\asympleq g$ if there exist  constants $C_1>0$,
$C_2>0$, $C_3\geqslant 0$, and $C_4\geqslant 0$  such
that $f(r)\leqslant C_1g(C_2r+C_3)+C_4$ for all $r$.
This partial order gives an equivalence relation $f\asymp g$ if
$f\asympleq g$ and $f\asympgeq g$. 
If $f\asymp g$ we say $f$ and $g$ are \emph{asymptotic}.

\section{Examples of contraction}\label{sect:examples}
We begin with a classical example.

\begin{example}
\begin{figure}[h]
  \centering
\labellist
\small
\pinlabel $Y$ [tl] at 106 49
\pinlabel $H$ at 50 130
\endlabellist
  \includegraphics[scale=.7]{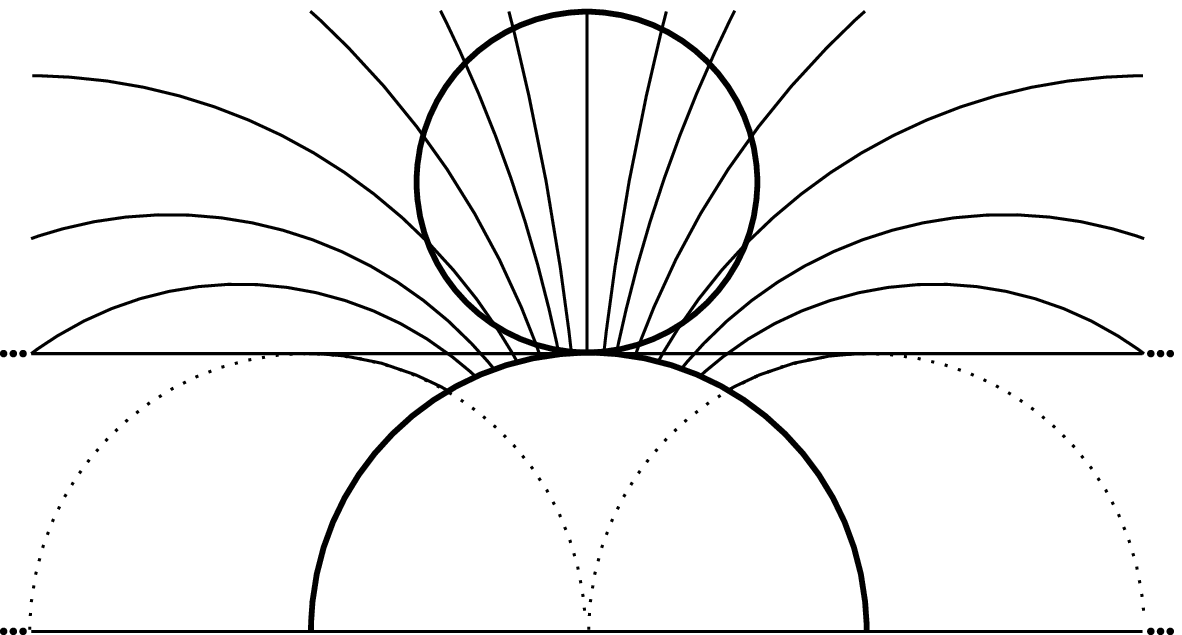}
  \caption{Contraction in $\mathbb{H}^2$.}
  \label{fig:h2}
\end{figure}
Let $X$ be the hyperbolic plane, with the upper half-space model, and let $Y$ be the geodesic that is the upper half of the unit circle, see \fullref{fig:h2}.
Pick any point $x\notin Y$. 
Up to isometry, we may assume $x$ sits on the $y$--axis above $Y$.
The ball of radius $d(x,Y)$ about $x$ is contained in the horoball $H:=\{(a,b)\in\mathbb{R}^2\mid b\geqslant 1\}$.
The closest point projection of $H$ to $Y$ has diameter $\ln (3+2\sqrt{2})$. 
Thus, $\pi_Y^0$ is $(r,\ln (3+2\sqrt{2}))$--contracting.
\end{example}

We now construct examples exhibiting a wider range of contracting
behaviors than have appeared previously in the literature.

\begin{example}\label{ex:log}
Let $\rho=\rho_1\from [0,\infty)\to [0,\infty)$ be an unbounded function such that
$\rho(r)\leqslant r$, $\mathrm{Id}-\rho$ is unbounded, and there exists an $A\geqslant 0$ with $\rho(A)>0$ such
that $0\leqslant\rho(a+b)-\rho(a)<b$ for all $a\geqslant A$ and $b>0$.  
We construct a space $X$ and $Y\subset X$ such that $\pi_Y^0$ is
$(\rho,2)$--contracting but not strongly contracting.

The map $\phi\from [A,\infty) \to [A-\rho(A),\infty) : x\mapsto
x-\rho(x)$ is a bijection by assumption. 
We set $\sigma(0):=\phi(A)$ and, for $i\in\N$, recursively define\footnote{
An \emph{Abel function} for $f$ is a function $\alpha$ such that $\alpha(f(x))=\alpha(x)+1$. 
The function $\sigma$ is the inverse of an Abel function for $\phi^{-1}$.}
$\sigma(i+1):=\phi^{-1}(\sigma(i))$. 
This is well-defined since $[A,\infty)\subset [A-\rho(A),\infty)$. 
Rearranging this expression yields
$\rho(\sigma(i+1))=\sigma(i+1)-\sigma(i)$. 
Note that $\sigma(i+1)-\sigma(i)\geqslant\rho(A)>0$ for every $i\in\N\cup\{0\}$, whence, in particular, $\sigma(i)\to\infty$ as $i\to\infty$.

Let $Y:=[0,\infty)$ be a ray.
For $i\in\N\cup\{0\}$, let $Z_i$ be a segment of length $\sigma(i)$ with endpoints labelled $y_i$ and $z_i$.
Identify $y_i$ with the point $i$ in $Y$.
Let $W_i$ be a segment of length $\sigma(i+1)-\sigma(i)+1$
connecting $z_i$ to $z_{i+1}$. 
Let $X$ be the resulting geodesic metric space.
See \fullref{fig:log}.

\begin{figure}[h]
  \centering
\labellist
\small
\pinlabel $Y$ [r] at 0 25
\pinlabel $Z_0$ [b] at 18 0
\pinlabel $Z_1$ [b] at 33 17
\pinlabel $Z_2$ [b] at 63 34
\pinlabel $Z_3$ [b] at 122 51
\pinlabel $W_0$ [tl] at 45 6
\pinlabel $W_1$ [tl] at 83 22
\pinlabel $W_2$ [tl] at 188 37
\tiny
\pinlabel $y_0$ [tr] at 2 2
\pinlabel $z_0$ [tl] at 32 0 
\pinlabel $x_0$ [t] at 58 12
\pinlabel $z_1$ [tl] at 65 15
\pinlabel $x_1$ [t] at 121 29
\pinlabel $z_2$ [tl] at 130 30
\pinlabel $x_2$ [t] at 246 44
\pinlabel $z_3$ [tl] at 258 45
\pinlabel $y_3$ [r] at 2 47 
\endlabellist
  \includegraphics{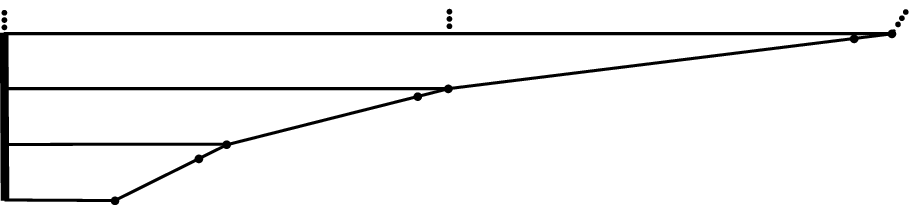}
  \caption{$(\rho_1,2)$--contraction}
  \label{fig:log}
\end{figure}

Let $x_i$ be the point of $W_i$ at distance $1/2$ from $z_{i+1}$. Clearly $\diam \pi_Y^0(x_i)=1$. 
It is easy to see that each complementary component of $X\setminus
(Y\cup\{x_i\}_{i\in\N\cup\{0\}})$ projects to a single point of $Y$. 
Now consider the ball of radius $\rho(d(x,Y))$ about some $x$. First
assume $x\in W_i$ for some $i$. 
Our assumptions on $\rho$ yield:
 \[\overline{N}_{\rho(d(x,Y))}(x)\subseteq W_i\cup
 \overline{N}_{\rho(\sigma(i))+1/2}(z_i)\cup
 N_{\rho(\sigma(i+1))+1/2}(z_{i+1})\]
The latter may contain $x_i$ and $x_{i+1}$ but no other $x_j$. If, on the other hand, $x$ is in some $Z_i$, then $\overline{N}_{\rho(d(x,Y))}(x)$ is contained in $Z_i\cup\overline{N}_{\rho(d(z_i,Y))}(z_i)$. Therefore, for any $x\in X$, we have that $\pi_Y^0(\overline{N}_{\rho(d(x,Y))}(x))$ has diameter at most 2.

Observe that $\overline{N}_{d(z_i,Y)}(z_i)$ contains $\{z_j,z_{j+1},\dots z_i\}$ for $0\leqslant i-j\leqslant\sigma(j)$. Since $\sigma(i)\to\infty$ as $i\to\infty$, this implies that $Y$ is not strongly contracting.

Concrete examples include:
\begin{itemize}
\item $\rho(r):=2\sqrt{r}-1$ and $A=1$ and $\sigma(r):=r^2$.
\item $\rho(r):=r/2$ and $A=2$ and $\sigma(r):=2^r$. This is an
  example of
  semi-strong contraction. 
\item $\rho(r):=\min\{r,r-\log_2 r\}$ and $A=2$ and
  $\sigma(r):=2\uparrow\uparrow r$.
\end{itemize}
In Knuth's `up-arrow notation' $2\uparrow\uparrow r$ denotes tetration, so that $2\uparrow\uparrow r=\underbrace{2^{\cdot^{\cdot^{\cdot^2}}}}_{r\text{ times}}$ when $r\in\N\cup\{0\}$.
\end{example}

The following proposition shows that it is sometimes possible to
`trade' between the input and output contraction functions, so we can
use \fullref{ex:log} to demonstrate further examples of 
$(\rho_1,\rho_2)$--contraction conditions.
\begin{proposition}\label{prop:Abel}
  Suppose that $\pi_Y^\almost$
  is $(\rho_1,B)$--contracting, where $B$ is a constant and
  $\rho=\rho_1$ is a non-decreasing, non-negative, unbounded function
  such that $\mathrm{Id}-\rho$ is unbounded and
such that there exists a constant $A$ such that $\rho(A)>0$ and $0\leqslant
\rho(a+b)-\rho(a)<b$ for all $a\geqslant A$ and $b>0$.
Define $A':=A-\rho(A)$.
For $x\in [A',\infty)$ define\footnote{The function $\alpha\from [A',\infty)\to \N\cup\{0\}$
is an Abel function for $(\mathrm{Id}-\rho)^{-1}$.
For instance,  take $\alpha$ to be the inverse of $\sigma\from\N\cup\{0\}\to\sigma(\N\cup\{0\})$ from \fullref{ex:log} extended to all of $[A',\infty)$ by a rounding-off function.}
$\alpha(x)$ to be the minimal non-negative
 integer such that $(\mathrm{Id}-\rho)^{\alpha(x)}(x)\in
[A',A)$.
Then $\pi_Y^\almost$ is $(r-A,\rho_2)$--contracting for some $\rho_2\asymp \alpha$.
\end{proposition}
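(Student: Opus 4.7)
The plan is to deduce the $(r - A, \rho_2)$--contracting property by chaining the constant bound $B$ along a geodesic between $x$ and $x'$, whenever $d(x, x') \leqslant d(x, Y) - A$. I will place waypoints $x_0 = x, x_1, \ldots, x_N = x'$ on this geodesic so that each consecutive pair witnesses the $(\rho, B)$--contraction hypothesis, and bound $N$ by iterating $\phi := \mathrm{Id} - \rho$ starting from $d(x, Y)$, aiming for $N \leqslant \alpha(d(x, Y))$.

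Concretely, fix a geodesic $\gamma \from [0, L] \to X$ from $x$ to $x'$ with $L = d(x, x')$. Define $s_0 := d(x, Y)$, $s_{i+1} := \phi(s_i)$, $t_i := d(x, Y) - s_i$, and $x_i := \gamma(\min\{t_i, L\})$. The triangle inequality gives $d(x_i, Y) \geqslant s_i$, and monotonicity of $\rho$ then yields $d(x_i, x_{i+1}) \leqslant t_{i+1} - t_i = \rho(s_i) \leqslant \rho(d(x_i, Y))$, so the hypothesis produces $\diam \pi_Y^\almost(x_i) \cup \pi_Y^\almost(x_{i+1}) \leqslant B$ for each relevant $i$. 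A standard telescoping argument, selecting a point in each non-empty $\pi_Y^\almost(x_i)$ and applying the triangle inequality, converts these $N$ consecutive bounds into $\diam \bigcup_i \pi_Y^\almost(x_i) \leqslant NB$. Iteration stops once $t_i \geqslant L$, equivalently $s_i \leqslant d(x, Y) - L$, and since $L \leqslant d(x, Y) - A$ forces $d(x, Y) - L \geqslant A$, the criterion $s_i < A$ defining $\alpha$ is at least as strong, so $N \leqslant \alpha(d(x, Y))$. Setting $\rho_2(r) := B \alpha(r)$ then delivers both the required diameter bound and $\rho_2 \asymp \alpha$.

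Finally, the admissibility of $\rho_2$ as a contraction function for $\rho_1(r) = r - A$ reduces to showing $\alpha(r)/(r-A) \to 0$. Since $\rho$ is unbounded, the auxiliary sequence $\sigma$ of \fullref{ex:log} satisfies $\sigma(n+1) - \sigma(n) = \rho(\sigma(n+1)) \to \infty$, so $\sigma$ grows superlinearly and its essential inverse $\alpha$ grows sublinearly. The main technical delicacy, as I see it, is aligning the halting time of the $\phi$--iteration with the integer $\alpha(d(x, Y))$ without losing constants; aside from this bookkeeping, the argument is purely the geometric chaining above, and I expect no further obstacles.
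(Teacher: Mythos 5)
Your proposal is correct and follows essentially the same route as the paper's proof: waypoints along a geodesic spaced by iterating $\phi=\mathrm{Id}-\rho$ from $d(x,Y)$, the count of steps bounded by $\alpha(d(x,Y))$, the telescoped bound $B\alpha(d(x,Y))$, and sublinearity of $\alpha$ via the divergence of the step lengths $\sigma(n+1)-\sigma(n)=\rho(\sigma(n+1))$. The "bookkeeping" you flag is handled the same way in the paper (its index $k$ satisfies $k<\alpha(r_0)$ for exactly the reason you give), so there is no remaining gap.
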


\begin{proof}
Observe as in \fullref{ex:log} that the map $\phi\from x\mapsto
x-\rho(x)$ is a bijection $[A,\infty)\to[A',\infty)$ and that, since
$\phi$ is strictly increasing for $x\geqslant A$, the collection
$\{[\phi^{k}(A'),\phi^{k-1}(A')) \mid k\leqslant 0\}$ is a partition of $[A',\infty)$.

We show that $\rho_2(r):=B\alpha(r)$ will suffice.
It follows from unboundedness of $\rho$ that $\rho_2$ is sublinear: we
have $\rho_2\asymp\alpha$. 
The map $\alpha$ is a step function with steps of height 1, so it is sufficient to show that the lengths of the steps go to infinity, ie $\phi^{-n-1}(A')-\phi^{-n}(A')\to \infty$ as $n\to\infty$.
As computed in \fullref{ex:log}, we have $\rho(\phi^{-n-1}(A'))=\phi^{-n-1}(A')-\phi^{-n}(A')$. Since $\phi^{-n-1}(A')\to\infty$ as $n\to\infty$ as argued in \fullref{ex:log} and since $\rho$ goes to infinity, sublinearity follows.

Let $x$ and $y$ be points of $X$ such that $d(x,y)\leqslant d(x,Y)-A$. 
Define $r_0:=d(x,Y)$ and while $r_0-r_i\leqslant d(x,y)$, define
$r_{i+1}:=\phi(r_i)$. 
Note that this is well-defined, ie $r_i\geqslant A$, since
$r_0-r_i\leqslant d(x,y)\leqslant r_0-A$. 
Let $k$ be the largest index such that $r_0-r_k\leqslant d(x,y)$. Then the fact that $\phi^{\alpha(r_0)}(r_0)<A$ and the observation we just made shows $k<\alpha(r_0)$.

Fix a geodesic from $x$ to $y$ and for $0\leqslant i\leqslant k$ define $x_i$ to be the point at distance $r_0-r_i$ from $x$ along this geodesic. Define $x_{k+1}:=y$. For $0\leqslant i\leqslant k$ we have $d(x_{i+1},x_i)\leqslant \rho(d(x_i,Y))$ by construction, whence:  
 \[\diam\pi_Y^\almost(x)\cup\pi_Y^\almost(y)\leqslant \sum_{i=0}^{
  k}\diam\pi_Y^\almost(x_i)\cup\pi_Y^\almost(x_{i+1})\leqslant B
  \alpha(r_0)\]
Thus, $\pi_Y^\almost$ is $(r-A,\rho_2)$--contracting.
\end{proof}

Applying \fullref{prop:Abel} to the concrete examples in
\fullref{ex:log} we see:
\begin{itemize}
\item $(2\sqrt{r}-1,2)$--contracting implies $(r-1,\rho_2)$--contracting
  for $\rho_2\asymp \sqrt{\cdot}$.
\item $(r/2,2)$--contracting implies $(r-2,\rho_2)$--contracting for
$\rho_2\asymp \log_2$.
\item Finally, $(r-\log_2r,2)$--contracting implies 
$(r-2,\rho_2)$--contracting for $\rho_2\asymp \mathrm{superlog}_2$.
\end{itemize}

That the converse to \fullref{prop:Abel} can fail follows from the next example.

\begin{example}\label{ex:necklace}
Let $\rho_2$ be a sublinear function such that $0<\rho_2(r)<r$.
Let $Y$ be a line.
Choose a collection of disjoint intervals $\{I_i\}_{i\in\N}$ of $Y$
such that $|I_i|=\rho_2(i)$ and let $y_i$ be the center of $I_i$.
Connect the endpoints of $I_i$ by attaching a segment $J_i$ of length $4i$, and let
$x_i$ be the center of this segment. Let $X$ be the resulting geodesic
space, see \fullref{fig:necklace}.
We claim $\pi_Y^0$ is $(r,\rho_2)$--contracting.

\begin{figure}[h]
  \centering
  \labellist
\small
\pinlabel $Y$ [tr] at 75 0
\tiny
\pinlabel $x_1$ [b] at 29 32
\pinlabel $x_2$ [b] at 121 44
\pinlabel $x_3$ [b] at 241 63
\pinlabel $y_1$ [t] at 29 1
\pinlabel $y_2$ [t] at 121 1
\pinlabel $y_3$ [t] at 241 1
\endlabellist
\includegraphics[scale=.75]{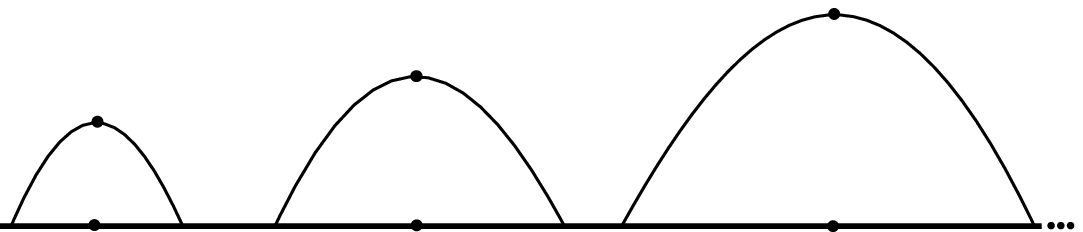}
  \caption{$(r,\rho_2)$--contracting}
  \label{fig:necklace}
\end{figure}

Suppose that $x\in J_i\subset X$ and $d(x,Y)<i$. 
Then $d(x,x_i)>d(x,Y)$, and $\diam \pi_Y^0(\overline{N}_{d(x,Y)}(x))=0$.
For $x\in J_i\subset X$ with $d(x,Y)\geqslant i$ we have $d(x,x_i)\leqslant d(x,Y)$ and:
\[\diam \pi_Y^0(\overline{N}_{d(x,Y)}(x))=\diam
\pi_Y^0(x_i)=\rho_2(i)\leqslant \rho_2(d(x,Y))\]
This proves the claim.
Furthermore,  $\rho_2$ is optimal, in the following sense: Since $\diam \pi_Y^0(x_i)=\rho_2(d(x_i,Y)/2)=\rho_2(i)$, if $\rho'_1$ and
$\rho'_2$ are some other functions such that $\pi_Y^0$ is
$(\rho_1',\rho_2')$--contracting then $\rho_2(i)\leqslant\rho_2'(2i)$ for $i\in\N$.
\end{example}

\section{The Morse property}\label{sec:morse}
The following two propositions establish our main result, \fullref{theorem:morse_equiv_contracting}.

\begin{proposition}\label{prop:basiccontractionimpliesmorse}
Let $Y$ be a subspace of a geodesic metric space $X$.
Suppose $\pi_Y^\almost$ is $(\rho_1,\rho_2)$--contracting.
There exists a function $\mu$, depending only on $\almost$, $\rho_1$, and
$\rho_2$, such that $Y$ is $\mu$--Morse.
\end{proposition}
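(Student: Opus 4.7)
The plan is to bound the maximum height $D := \sup_t d(\gamma(t), Y)$ of any $(L, A)$--quasi-geodesic $\gamma\from [a,b]\to X$ with endpoints on $Y$ in terms of $\almost, \rho_1, \rho_2, L, A$. I first reduce to the case where $\gamma$ is continuous, via the standard trick that any quasi-geodesic lies within bounded Hausdorff distance of a continuous one with slightly larger constants, which changes $\mu$ only by a controlled amount. Let $p = \gamma(t_p)$ realize $D$.

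The key step is a subdivision argument at a well-chosen threshold height $H \in (0, D)$ (eventually $H$ will be taken proportional to $D$). Since the height function $t \mapsto d(\gamma(t), Y)$ is $1$--Lipschitz and continuous, the intermediate value theorem provides $t_1 \leqslant t_p \leqslant t_2$ with $d(\gamma(t_1), Y) = d(\gamma(t_2), Y) = H$ and $d(\gamma(u), Y) \geqslant H$ throughout $[t_1, t_2]$. Subdivide $[t_1, t_2]$ into parameter intervals of length at most $(\rho_1(H) - A)/L$, which is positive for $D$ large since $\rho_1$ is unbounded. The resulting $N$ consecutive sampled points $x_i$ satisfy $d(x_i, x_{i+1}) \leqslant \rho_1(H) \leqslant \rho_1(d(x_i, Y))$, so the contracting hypothesis gives $\diam \pi_Y^\almost(x_i)\cup\pi_Y^\almost(x_{i+1}) \leqslant \rho_2(d(x_i,Y)) \leqslant \rho_2(D)$. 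Summing yields $\diam(\pi_Y^\almost(\gamma(t_1))\cup \pi_Y^\almost(\gamma(t_2))) \leqslant N\rho_2(D)$.

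Two bounds on $d(\gamma(t_1), \gamma(t_2))$ then emerge: an upper bound $\leqslant 2(H+\almost) + N\rho_2(D)$ from the triangle inequality through the projections, and a lower bound $\geqslant 2(D-H-A)/L^2 - A$ from the fact that $\gamma$ must travel from height $H$ up to $p$ at height $D$ and back. Together with the quasi-geodesic estimate $N \leqslant L^2(d(\gamma(t_1), \gamma(t_2)) + A)/(\rho_1(H) - A) + 1$, the self-referential term can be absorbed once $L^2 \rho_2(D)/(\rho_1(H) - A)$ is small enough (guaranteed for $D$ large by the limit $\rho_2/\rho_1 \to 0$). Taking $H = D/(L^2 + 2)$, these combine to yield
\[
\frac{2D}{L^2(L^2+2)} \leqslant O\!\left(\frac{H\, \rho_2(D)}{\rho_1(H)}\right) + O(\almost + A),
\]
whose right-hand side is $o(D)$ by the contracting hypothesis, forcing a uniform bound on $D$.

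The main obstacle is the final asymptotic step: one must ensure $\rho_2(D)/\rho_1(D/(L^2+2)) \to 0$ as $D \to \infty$. For the central case $\rho_1(r) = r$ (sublinear contracting), this is immediate from sublinearity of $\rho_2$, as $\rho_2(D)\cdot(L^2+2)/D \to 0$. For general $\rho_1$---only assumed non-decreasing, unbounded, and bounded above by the identity---one must combine the hypothesis $\rho_2/\rho_1\to 0$ with additional care to control the quotient $\rho_1(D)/\rho_1(D/c)$, or else adapt the choice of $H$ more cleverly to guarantee that the subdivision step size $\rho_1(H)$ is genuinely comparable to the heights encountered along the arc.
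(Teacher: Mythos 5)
Your overall strategy (reduce to continuous quasi-geodesics, isolate an excursion above a threshold height, subdivide it so the contraction hypothesis applies to consecutive sample points, and play the resulting upper bound on $d(\gamma(t_1),\gamma(t_2))$ against the quasi-geodesic lower bound) is exactly the right one and is the paper's strategy as well. For the sublinear case $\rho_1(r)=r$ your argument closes. But the difficulty you flag in your last paragraph is a genuine gap, not a routine detail: the statement is for arbitrary $(\rho_1,\rho_2)$, and the hypothesis $\lim_{r\to\infty}\rho_2(r)/\rho_1(r)=0$ does \emph{not} imply $\rho_2(D)/\rho_1(D/(L^2+2))\to 0$. Since $\rho_1$ is only assumed non-decreasing, unbounded, and $\leqslant r$, it can fail any doubling condition: take $\rho_1(r)=2^{2^n}$ on $[2^{2^n},2^{2^{n+1}})$ and $\rho_2=\rho_1/\log\log$; then $\rho_2/\rho_1\to 0$ but at $D=2^{2^{n+1}}$ one has $\rho_1(D/2)=\sqrt{D}$ while $\rho_2(D)=D/\log\log D$, so your absorption constant $L^2\rho_2(D)/\rho_1(H)$ blows up. The root cause is that you use a \emph{uniform} step size $\rho_1(H)$ across the whole excursion while paying the contraction cost $\rho_2(D)$ at the \emph{maximal} height, so you are forced to compare $\rho_1$ and $\rho_2$ at different arguments.

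The paper's proof repairs precisely this by making the subdivision adaptive: $t_{i+1}$ is the first time with $d(\gamma(t_i),\gamma(t_{i+1}))=\rho_1(d(\gamma(t_i),Y))$, so each step contributes $\rho_1(d(\gamma(t_i),Y))$ to the length lower bound and $\rho_2(d(\gamma(t_i),Y))$ to the projection upper bound --- the ratio $\rho_2/\rho_1$ is always evaluated at the single argument $d(\gamma(t_i),Y)$, where the hypothesis applies directly. Correspondingly, the threshold is a fixed constant $E$ (chosen so that $\rho_1(E)>3A$ and $\rho_2(r)/\rho_1(r)<1/(3L^2)$ for $r\geqslant E$) rather than a height proportional to the unknown maximum $D$, and one bounds the \emph{duration} of each excursion above $E$ rather than the maximal height directly; the height bound then follows. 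If you replace your uniform subdivision by this greedy, height-dependent one, your double-counting argument goes through verbatim for general $\rho_1$.
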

\begin{proof}
Given $L'$ and $A'$ there exist $L$, $A$, and $C$ such that every
$(L',A')$--quasi-geodesic is $C$--Hausdorff equivalent to a
continuous $(L,A)$--quasi-geodesic with the same endpoints \cite[Lemma
III.H.1.11]{BriHae99}.
Thus, it suffices to show there exists a bound $B$, depending only on
$\almost$, $\rho_1$ and $\rho_2$, such that every continuous
  $(L,A)$--quasi-geodesic connecting points on $Y$ is contained in
  $N_B(Y)$.
Then we set $\mu(L',A'):=B+C$.

Let $\gamma$ be a continuous $(L,A)$--quasi-geodesic with endpoints on $Y$.
Take $E$ to be sufficiently large so that
$\rho_1(E)>3A$ and for all $r\geqslant E$ we have
$\frac{\rho_2(r)}{\rho_1(r)}<\frac{1}{3L^2}$.

Suppose $\gamma\not\subseteq N_E(Y)$, and let $[a,b]$ be a maximal subinterval of the domain of $\gamma$ such that $\gamma|_{[a,b]}\subset X\setminus N_E(Y)$. 
We show there exists a $T$ independent of $\gamma$ and $Y$ such that
$b-a\leqslant T$.
We conclude by setting  $B:=E+L\cdot\frac{T}{2}+A$.

Let $t_0:=a$. 
Supposing we have defined $t_0,\dots,t_i$, if $d(\gamma(t_i),\gamma(b))>\rho_1(d(\gamma(t_i),Y))$ define $t_{i+1}$ to be the first time that $d(\gamma(t_i),\gamma(t_{i+1}))=\rho_1(d(\gamma(t_i),Y))$.
Such a $t_{i+1}$ exists because $\gamma$ is continuous. 
Since $d(\gamma,Y)\geqslant E$ we have
$d(\gamma(t_i),\gamma(t_{i+1}))=\rho_1(d(\gamma(t_i),Y))\geqslant \rho_1(E)>0$, so
after finitely many steps we reach an index $k$ such that $d(\gamma(t_k),\gamma(b))\leqslant\rho_1(d(\gamma(t_k),Y))$.
Applying the contraction condition to the points $\gamma(t_i)$, we see: 
\[
\diam
\pi_Y^\almost(\gamma(a))\cup\pi_Y^\almost(\gamma(b))\leqslant
\sum_{i=0}^k\rho_2(d(\gamma(t_i),Y))\]

This allows us to estimate:
\begin{align}
  d(\gamma(a),\gamma(b)) &\leqslant
                           d(\gamma(a),\pi_Y^\almost(\gamma(a)))\notag\\
&\qquad+\diam  \pi_Y^\almost(\gamma(a))\cup\pi_Y^\almost(\gamma(b)) +d(\gamma(b),\pi_Y^\almost(\gamma(b)))\notag\\
&\leqslant 2(E+\almost)+\sum_{i=0}^{k}\rho_2(d(\gamma(t_i),Y))\label{diamest}
\end{align}

On the other hand, since $\gamma$ is a $(L,A)$--quasi-geodesic, we have:
\begin{align*}
  L d(\gamma(a),\gamma(b))+LA&\geqslant b-a= b-t_k+\sum_{i=0}^{k-1}(t_{i+1}-t_i)\\
&\geqslant
  \frac{1}{L}(d(\gamma(b),\gamma(t_k))-A)+\sum_{i=0}^{k-1}\frac{1}{L}(d(\gamma(t_{i+1}),\gamma(t_i))-A)\\
&= \frac{1}{L}(d(\gamma(b),\gamma(t_k))-\rho_1(d(\gamma(t_k),Y)))\\
&\qquad+\sum_{i=0}^{k}\frac{1}{L}(\rho_1(d(\gamma(t_i),Y))-A)\\
&\geqslant
  \frac{-d(\gamma(b),Y)}{L}+\sum_{i=0}^{k}\frac{1}{L}(\rho_1(d(\gamma(t_i),Y))-A)\\
&= -\frac{E}{L}+\sum_{i=0}^{k}\frac{1}{L}(\rho_1(d(\gamma(t_i),Y))-A)
\end{align*}

Combining this with the previous inequality and rearranging terms, we have:
\[ \sum_{i=0}^k\left(\rho_1(d(\gamma(t_i),Y))-L^2\rho_2(d(\gamma(t_i),Y))-A\right)\leqslant E+L^2A+2L^2(E+\almost)\]

Now,  left hand side is at least
$L^2\sum_{i=0}^k\rho_2(d(\gamma(t_i),Y))$, by our choice of $E$;
combined with \eqref{diamest}, this gives us:
\[d(\gamma(a),\gamma(b))\leqslant \frac{E}{L^2}+A+4(E+\almost)\]

This estimate and the fact that $\gamma$ is a quasi-geodesic
give us a bound for $b-a$.
\end{proof}

\begin{proposition}\label{prop:morseimpliessublinearcontraction}
Let $Y$ be a subspace of a geodesic metric space $X$.
Suppose there is a non-decreasing function $\mu$ such that every continuous
$(L,0)$-quasi-geodesic with endpoints on $Y$ is contained in the
closed $\mu(L)$--neighborhood of $Y$. 
Suppose the empty set is not in the image of $\pi_Y^\almost$.
Then there is a function $\rho'$, depending only on $\mu$ and $\almost$, such that $\pi_Y^\almost$
is $(r,\rho')$--contracting. 
\end{proposition}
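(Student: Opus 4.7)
Given $x, x' \in X$ with $d(x, x') \leq r := d(x, Y)$, pick $y \in \pi_Y^\almost(x)$ and $y' \in \pi_Y^\almost(x')$ with $d(y, y')$ realizing, up to any desired error, $\diam \pi_Y^\almost(x) \cup \pi_Y^\almost(x')$, and set $D := d(y, y')$. The plan is to bound $D$ by a sublinear function $\rho'(r)$ depending only on $\mu$ and $\almost$; this furnishes the required $(r, \rho')$-contraction.

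The natural candidate witness is the broken geodesic $p := [y,x] \cdot [x, x'] \cdot [x', y']$ of length $\ell \leq 4r + 2\almost$ with endpoints on $Y$. Parametrizing $p$ by arc length $p\colon[0,\ell]\to X$, a triangle-inequality argument dual to the forward direction in \fullref{prop:basiccontractionimpliesmorse} gives
\[
d(p(s), p(t)) \;\geq\; (t - s) - (\ell - D),\qquad 0\leq s\leq t\leq \ell,
\]
so $p$ is a $(1, \ell - D)$-quasi-geodesic with endpoints on $Y$.

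The obstacle is that the Morse hypothesis is stated for \emph{continuous $(L,0)$-quasi-geodesics}, while $p$ fails to be such an object for any finite $L$: near its corners at $x$ and $x'$ the ratio $d(p(s), p(t))/(t-s)$ can drop to zero. My plan is therefore to upgrade $p$ to a continuous $(L, 0)$-quasi-geodesic $\tilde p$ from $y$ to $y'$ with quantitative control on (i) the quasi-geodesic constant $L$ and (ii) the distance $d(x, \tilde p)$. The construction is a straightening procedure: while there exist $s < t$ with $d(p(s), p(t))$ much smaller than $t - s$, replace $p|_{[s,t]}$ by a geodesic chord. This preserves the endpoints on $Y$, strictly decreases the length, and should converge to some $\tilde p$ with $L$ controlled by the relative inefficiency $\ell/D$ and $d(x,\tilde p)$ controlled by the inefficiency $\ell-D$.

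Applying the Morse hypothesis to $\tilde p$ gives $\tilde p \subseteq \overline{N}_{\mu(L)}(Y)$, hence
\[
r \;=\; d(x, Y) \;\leq\; d(x, \tilde p) + \mu(L).
\]
Combining this inequality with the quantitative control on $L$ and $d(x,\tilde p)$, and the bound $\ell \leq 4r + 2\almost$, unravels to yield a sublinear upper bound $D \leq \rho'(r)$ with $\rho'$ depending only on $\mu$ and $\almost$.

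The main obstacle is the straightening step. A naive full straightening collapses $p$ onto the geodesic $[y, y']$, which is far from $x$ and thus yields no information. A \emph{partial} straightening is required: one that kills the worst corners well enough to achieve the $(L,0)$ condition, yet leaves $\tilde p$ passing close enough to $x$ for the Morse inequality above to force $D$ to be sublinear. Balancing (i) and (ii) quantitatively is the delicate point of the argument.
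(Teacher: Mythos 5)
Your outline follows the same route as the paper (project, form the broken geodesic through $x$ and $x'$, upgrade it to an $(L,0)$--quasi-geodesic, apply the Morse hypothesis), but the proof has a genuine gap: the two steps you yourself flag as ``delicate'' are precisely the content of the argument, and neither is carried out. First, the partial straightening. The paper's resolution is not an iterative ``while loop'' whose convergence and effect on $d(x,\tilde p)$ are unclear; instead one fixes $L$ \emph{in advance} in terms of the quantity to be bounded, namely $L:=\frac{3(4r+2\almost)}{D}$ (up to the factor making $D$ nearly realize the optimal contraction function), considers $\Delta(p,q):=Ld(p,q)-|[p,q]_\gamma|$ on pairs of points of $\gamma$, and replaces at most \emph{two} subsegments, each maximal among those on which $\Delta\leqslant 0$ at the endpoints, by geodesic chords. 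Maximality is what forces the modified path to be a genuine $(L,0)$--quasi-geodesic (this requires the case-by-case verification in the paper), and the choice of $L$ forces each chord to have length $\leqslant|\gamma|/L= D/3<D$, which is exactly what prevents the collapse onto $[y,y']$ that you worry about. Note also that your proposed way of concluding, $r\leqslant d(x,\tilde p)+\mu(L)$ together with ``$d(x,\tilde p)$ controlled by $\ell-D$,'' cannot work as stated: $\ell-D$ is of order $4r-D$, which is comparable to $r$ whenever $D$ is small, so it gives no information. In the cases where a corner at $x$ is cut, the paper instead bounds $D$ \emph{directly}: the chord endpoints lie on $\tilde p$, hence within $\mu(L)$ of $Y$ and within $\mu(L)+\almost$ of $p_x$ resp.\ $p_y$, and the chord itself has length $\leqslant D/3$, giving $D\leqslant 2\mu(L)+2\almost+D/3$ and hence $D\leqslant 6\mu(L)+6\almost$ (with worse constants in the remaining case).

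Second, even granting the inequality $D\leqslant 18\mu\bigl(\tfrac{3(4r+2\almost)}{D}\bigr)+12\almost$, it does not simply ``unravel'' to a sublinear bound: it is an implicit inequality in which $L$ blows up as $D$ shrinks, so $\mu(L)$ is not a priori controlled. The paper handles this by \emph{defining} $\rho'(r)$ as the supremum of all $s\leqslant 4r+2\almost$ satisfying $s\leqslant 18\mu\bigl(\tfrac{3(4r+2\almost)}{s}\bigr)+12\almost$ and then proving separately that this $\rho'$ is non-decreasing and sublinear (if $\rho'(r_i)>\delta r_i$ along an unbounded sequence, then the argument of $\mu$ is eventually at most $12/\delta+1$, so the right-hand side stays bounded while the left-hand side is unbounded). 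Without this step you have not produced the function $\rho'$ the proposition requires, nor verified the condition $\lim_{r\to\infty}\rho'(r)/r=0$ in the definition of $(r,\rho')$--contracting. Both missing pieces are substantial; as written the proposal is a plan rather than a proof.
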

We remark that since an $(L,0)$--quasi-geodesic is also an
$(L',0)$--quasi-geodesic for any $L'>L$, there is no loss in requiring
the Morse function to be non-decreasing. 
\begin{proof}
Consider the optimal contraction function:
\[\rho(r):=\sup_{d(x,y)\leqslant d(x,Y)\leqslant
  r}\diam\pi_Y^\almost(x)\cup\pi_Y^\almost(y)\leqslant 4r+2\almost\]
Our goal is define a function $\rho'$ depending on
$\mu$ and $\almost$ that is
non-negative, non-decreasing, and sublinear and such that $\rho'$
is an upper bound for $\rho$.

Define $\rho'(r):=0$ if $\almost =0$ and $\mu\equiv 0$. In this case
$\rho'$ clearly has the first three properties.
Otherwise, we first replace $\mu$ by $s\mapsto
\inf_{t>s}\mu(s)$. 
The new $\mu$ still satisfies the hypotheses of the proposition and
has that additional property that it is right continuous: 
$\lim_{t\to s^+}\mu(t)=\mu(s)$ for all $s\geqslant 1$.
Define $\rho'(0):=2\almost$ and for $r>0$ define:
\[\rho'(r):=\sup\left\{s\leqslant 4r+2\almost\mid s\leqslant 18\mu\left(\frac{3(4r+2\almost)}{s}\right)+12\almost\right\}\]

If $\mu\equiv 0$ then $\rho'$ increases linearly from $2\almost$ to
$12\almost$ and then remains constant, so it is non-negative, non-decreasing, and
sublinear.

If $\mu\not\equiv 0$ then $\rho'(r)>0$ when $r>0$, and the conditions
on $\mu$ ensure $\rho'$ is actually a maximum.
The fact that it is non-decreasing then follows by observing that
$\rho'(r)$ participates in the supremum defining $\rho'(r')$ when
$0\leqslant r<r'$.
To see $\rho'$ is sublinear, we suppose that
$\limsup_{r\to\infty}\rho'(r)/r>0$ and derive a contradiction. 
Suppose that there exists some $\delta>0$ and
a sequence $(r_i)$ of positive numbers increasing without bound such that
$\rho'(r_i)>\delta r_i$ for all $i$.
By definition of $\rho'$, for each $i$ there exists $\delta r_i<s_i\leqslant 4r_i+2\almost$
such that: 
\[s_i\leqslant
18\mu\left(\frac{3(4r_i+2\almost)}{s_i}\right)+12\almost\leqslant
18\mu\left(\frac{3(4r_i+2\almost)}{\delta r_i}\right)+12\almost\]
This is a contradiction, since the left-hand side grows without bound
while the right-hand side is bounded above by
$18\mu(\frac{12}{\delta}+1)+12\almost$ once $i$ is sufficiently large.

\medskip
Now we must show $\rho(r)\leqslant \rho'(r)$. 
It suffices to check this for those $r$ such that $\rho(r)>0$.
The idea of the proof is to choose, for each such $r$,
points $x$ and $y$ such that $d(x,y)\leqslant d(x,Y)\leqslant r$ whose
projection diameters nearly realize $\rho(r)$.
Take a path $\gamma$ that is a concatenation of geodesics from a
projection point of $x$ to $x$, then from $x$ to $y$, then from $y$ to
a projection point of $y$. 
For $L:=\frac{3(4r+2\almost)}{\rho(r)}\geqslant 3$ we show that we can make $\gamma$ into an
$(L,0)$--quasi-geodesic $\gamma'$ by introducing at most two
shortcuts in a particular way.
The Morse hypothesis implies that $\gamma'$ is contained in the $\mu(L)$--neighborhood
of $Y$.
We then argue that the condition $d(x,y)\leqslant d(x,Y)$
implies:
\begin{equation}
  \label{eq:1}
\rho(r)< 18\mu(L)+12\almost
\end{equation}

In the case that $\almost=0$ and $\mu\equiv 0$, this gives a
contradiction, which means that there is no $r$ for which $\rho$ takes
a positive value, and we have $\rho(r)=\rho'(r)= 0$ for all $r$.
Otherwise, plugging  the value of $L$ into \eqref{eq:1}, we conclude that
$\rho(r)$ participates in the supremum defining $\rho'(r)$, whence
$\rho(r)\leqslant \rho'(r)$.

\bigskip
First we show how to produce quasi-geodesics.
Consider points $x$, $y$, $p_x\in\pi_Y^\almost(x)$, and $p_y\in\pi_Y^\almost(y)$.
Let $\gamma:=[p_x,x][x,y][y,p_y]$ be a concatenation of three
geodesics.
Let $[p,q]_\gamma$ denote the subsegment of $\gamma$ from $p$ to $q$,
and let $|[p,q]_\gamma|$ denote its length.
For this part of the argument we may use any $L\geqslant\frac{|\gamma|}{d(p_x,p_y)}\geqslant 1$.
Consider the continuous function $D(p,q):=Ld(p,q)-|[p,q]_\gamma|$ 
defined on points $(p,q)\in \gamma\times\gamma$ such that $p$
precedes $q$ on $\gamma$.
The restriction on $L$ implies that $D(p_x,p_y)\geqslant 0$.
We conclude that if $[p,q]_\gamma$ is a subsegment of
$\gamma$ that is maximal with respect inclusion among subsegments for
which $D$ takes
non-positive values on the endpoints, then $Ld(p,q)=|[p,q]_\gamma|$.
We consider several cases. 
Each carries the additional assumption that
we are not in one of the previous cases.

\medskip
 {\itshape Case 0: $D$ is non-negative.} Set $\gamma':=\gamma$, which is an
 $(L,0)$--quasi-geodesics by definition of $D$.

\medskip
{\itshape Case 1: $D$ takes a non-positive value on $[p_x,x]_\gamma\times
  [y,p_y]_\gamma$.} In this case there exist points $x'\in [p_x,x]$
and $y'\in [p_y,y]$ such that the segment $[x',y']_\gamma$ is maximal
with respect to inclusion among subsegments of $\gamma$ with  the property that $D$ takes non-positive values on
endpoints. 
Define $\gamma'$ by replacing
$[x',y']_\gamma$ by some geodesic segment with the same endpoints;
$\gamma':=[p_x,x']_\gamma[x',y'][y',p_y]_\gamma$.
We claim that $\gamma'$ is an $(L,0)$--quasi-geodesic.
Since $\gamma'$ is a concatenation of geodesic segments, it suffices to check
that points on distinct segments are sufficiently far apart. 
We check distances between arbitrary points $x''\in [p_x,x']_{\gamma'}$, $z\in [x',y']_{\gamma'}$, and
$y''\in [y',p_y]_{\gamma'}$.

Suppose, for contradiction, that $Ld(x'',y'')<|[x'',y'']_{\gamma'}|$.
Since $[x',y']_\gamma$ has been replaced by a geodesic segment,
$Ld(x'',y'')<|[x'',y'']_{\gamma'}|\leqslant |[x'',y'']_\gamma|$, so $D(x'',y'')<0$.
Since $D(x',y')=0$ we have $x''\in [p_x,x')_\gamma$ or $y''\in
(y',p_y]_\gamma$, but then $[x'',y'']_\gamma$ is a subsegment of
$\gamma$ strictly containing $[x',y']_\gamma$ such that $D$ takes a
non-positive value on its endpoints. 
This contradicts maximality of
$[x',y']_\gamma$ among such subsegments, so $d(x'',y'')\geq\frac{|[x'',y'']_{\gamma'}|}{L}$.

Suppose, for contradiction, that $Ld(x'',z)<|[x'',z]_{\gamma'}|$. 
This implies $x''\neq x'$, because $x'$ and $z$ lie on a geodesic
subsegment of $\gamma'$.
We estimate:
\begin{align*}
  d(x'',y')&\leqslant d(x'',z)+d(z,y')\\
&<\frac{|[x'',z]_{\gamma'}|}{L}+d(z,y')\\
&=\frac{d(x'',x')+d(x',z)}{L}+d(x',y')-d(x',z)\\
&=\frac{|[x'',x']_\gamma|}{L}+\frac{|[x',y']_\gamma|}{L}-\left(\frac{L-1}{L}\right)d(x',z)\\
&\leqslant \frac{|[x'',y']_\gamma|}{L}
\end{align*}
Since $x''\in [p_x,x')_\gamma$, we have exhibited a
subsegment $[x'',y']_\gamma$ strictly containing $[x',y']_\gamma$
such that $D$ takes a non-positive value on its endpoints.
This contradicts maximality of $[x',y']_\gamma$ among such subsegments, so $d(x'',z)\geqslant
\frac{|[x'',z]_{\gamma'}|}{L}$.

A symmetric argument shows
$d(y'',z)\geqslant \frac{|[y'',z]_{\gamma'}|}{L}$, so $\gamma'$ is an $(L,0)$--quasi-geodesic.

\medskip
{\itshape Case 2: $D$ takes a non-positive value on an
  element of 
  $[p_x,x]_\gamma\times (x,y]_\gamma$.}
Let $[x',q_x]_\gamma$ be a subsegment of $\gamma$ maximal with respect
to inclusion among subsegments for which $D$
takes non-positive values on endpoints, with $x'\in [p_x,x]_\gamma$. 
Since we are not in Case 1, $q_x\in (x,y)_\gamma$.
Now consider whether or not $[q_x,p_y]_\gamma$
is an $(L,0)$--quasi-geodesic.
If so, define $\gamma':=[p_x,x']_\gamma[x',q_x][q_x,p_y]_\gamma$.
Otherwise, $D$ takes a negative value on an element
of $[q_x,y)_\gamma\times (y,p_y]_\gamma$. 
Let $[q_y,y']_\gamma$ be a maximal subsegment of $[q_x,p_y]_\gamma$,
with $q_y\in [q_x,y)_\gamma$ and $y'\in (y,p_y]$ on which
$D$ takes non-positive values on endpoints.
We claim that  $q_y\in (q_x,y)_\gamma$ and $D(q_y,y')=0$, because if
$D(q_y,y)<0$ and $q_y\neq q_x$ then we can enlarge the subsegment,
contradicting maximality, while if $q_y=q_x$ then $D(x',y')\leqslant 0$,
contradicting the assumption that we are not in Case 1.

In either of these cases, we claim $\gamma'$ is an
$(L,0)$--quasi-geodesic. 
This follows by verifying that the distance between
points in distinct geodesic components of $\gamma'$ have distance at
least equal to the length of the subsegment of $\gamma'$ they bound
divided by $L$. 
The strategy is to suppose $D$ attains a strictly
negative value and then either derive a contradiction to maximality of
$[x',q_x]_\gamma$ or $[q_y,y']_\gamma$ or to the assumption that we
are not Case 1.
The arguments are substantially similar to the computations in Case 1
and are left to the reader.

\medskip
{\itshape Case 3: $D$ takes a non-positive value on an
  element of 
  $[x,y)_\gamma\times [y,p_y]_\gamma$.}
The argument here is symmetric to the subcase of Case 2 in which only
a corner at $x$ is cut short.

\bigskip
We have shown how to
produce an $(L,0)$--quasi-geodesic $\gamma'$ from $\gamma$.
We now proceed to show $\rho(r)\leqslant \rho'(r)$ for any $r$ such
that $\rho(r)>0$.
Since $\rho(r)>0$ there exist $x$ and $y$ such that $d(x,y)\leqslant
d(x,Y)\leqslant r$ and $\diam\pi_Y^\almost(x)\cup\pi_Y^\almost(y)>\frac{2}{3}\rho(r)$.
Choose $p_x\in\pi_Y^\almost(x)$, $p_y\in\pi_Y^\almost(y)$ such that
$d(p_x,p_y)>\frac{2}{3}\rho(r)$.

Let $\gamma:=[p_x,x][x,y][y,p_y]$.
Let $L:=\frac{12r+6\almost}{\rho(r)}\geqslant
2\frac{|\gamma|}{d(p_x,p_y)}$, and let $\gamma'$ be the
$(L,0)$--quasi-geodesic produced from $\gamma$ as above. 
By the Morse hypothesis, $\gamma'$ is contained in the $\mu(L)$--neighborhood of
$Y$. 

\medskip
{\itshape Case a: $\gamma'$ comes from Case 0 or Case 3.} 
In this case $x\in\gamma'$, so  $d(x,Y)\leqslant \mu(L)$, so
$\rho(r)<\frac{3}{2}d(p_x,p_y)\leqslant \frac{3}{2}(4\mu(L)+2\almost)$.

\medskip
{\itshape Case b: $\gamma'$ comes from Case 1.} 
In this case $p_x\in\pi_Y^\almost(x')$ and $p_y\in\pi_Y^\almost(y')$, so
$d(x',p_x)\leqslant\mu(L)+\almost$ and $d(y',p_y)\leqslant \mu(L)+\almost$.
Also, by definition of $L$ we have:
\[d(x',y')=\frac{|[x',y']_\gamma|}{L}\leqslant\frac{|\gamma|}{L}\leqslant\frac{4r+2\almost}{\frac{3(4r+2\almost)}{\rho(r)}}=\frac{\rho(r)}{3}\]
Since $d(p_x,p_y)>\frac{2}{3}\rho(r)$, we conclude
  $d(x',p_x)+d(y',p_y)>\frac{\rho(r)}{3}$, so that $\rho(r)<6\mu(L)+6\almost$.

\medskip
{\itshape Case c: $\gamma'$ comes from Case 2.}
In this case $\gamma'$ contains a geodesic segment from a point $x'\in
[p_x,x]_\gamma$ to a point $q_x\in [x,y]_\gamma$.
As in the previous case,
$d(x',q_x)=\frac{|[x',q_x]_\gamma}{L}\leqslant
\frac{|\gamma|}{L}\leqslant \frac{\rho(r)}{3}$.
Consider a point $w\in\pi_Y^\almost(q_x)$.
Since $d(x,y)\leqslant d(x,Y)$, we have $d(q_x,y)\leqslant
d(q_x,Y)\leqslant \mu(L)$, which implies $d(w,p_y)\leqslant 4\mu(L)+2\almost$.
Thus $d(p_x,w)> \frac{2}{3}\rho(r)-(4\mu(L)+2\almost)$.
We also have $d(x',Y)\leqslant \mu(L)$ and $d(q_x,Y)\leqslant \mu(L)$,
since both these points belong to $\gamma'$, so:
\[\frac{\rho(r)}{3}\geqslant d(x',q_x)\geqslant
d(p_x,w)-d(x',p_x)-d(q_x,w)>\frac{2}{3}\rho(r)-(6\mu(L)+4\almost)\]
The resulting bound on $\rho(r)$ is the largest of the three cases,
and establishes the bound of \eqref{eq:1}, completing the proof.
\end{proof}

\section{Divergence}
In this section we relate divergence to contraction and the Morse
property, thereby proving \fullref{theorem:morse_equiv_csld}.

There is a link between the Morse property and superlinear divergence
via asymptotic cones \cite{DruMozSap10}. 
Although this principle is well-known, there are competing definitions of
`superlinear' and `divergence', so we give a detailed proof of
\fullref{theorem:morse_equiv_csld} in terms of our definitions.
Our analysis actually yields more.
In the introduction we claimed that for a quasi-geodesic  the Morse property, hence,
sublinear contraction, is morally the opposite of high divergence. 
We prove a precise technical  formulation of this claim in \fullref{prop:csld_implies_sublinear_contraction}.
Roughly speaking, the result we obtain is  that if divergence of a
quasi-geodesic $\gamma$ is greater than a  function
$f$ then almost closest point projection to $\gamma$ is $(r, f^{-1})$--contracting.

\begin{defi}\label{def:divergence}
Let $X$ be a geodesic metric space and let $\gamma\from\R\to X$ be an $(L,A)$--quasi-geodesic. 
Let $\lambda\in(0,1]$, and let $\kappa\geqslant L+A$.
Let $\Lambda_\gamma(r,s;L,A,\lambda,\kappa)$ be the infimal length
of a path from $\gamma(s-r)$ to $\gamma(s+r)$ that is disjoint from
the ball of radius $\lambda(L^{-1}r - A) -\kappa$ centered at
$\gamma(s)$, or $\infty$ if no such path exists.
The \emph{$(L,A,\lambda,\kappa)$--divergence} of $\gamma$ evaluated at
$r$ is $\Delta_\gamma(r;L,A,\lambda,\kappa):=\inf_s \Lambda_\gamma(r,s;L,A,\lambda,\kappa)$.
\end{defi}

Notice that if $\gamma$ is a geodesic, $\lambda:=1/2$, and $\kappa:=2$ we recover the definition of divergence we gave in the introduction.

We make the convention that $\infty\leqslant\infty$.

In light of the following lemma, $\gamma$ has a well defined
divergence, up to equivalence of functions, and we use
$\Delta_\gamma(r)$ to denote the equivalence class of $\Delta_\gamma(r; L,A,\lambda,\kappa)$.

\begin{lemma}\label{lemma:divergence_welldefined} 
Let $\gamma$ be an $(L,A)$--quasi-geodesic. 
Suppose $\gamma$ is also an $(L',A')$--quasi-geodesic. 
Let $\lambda,\lambda'\in(0,1]$, $\kappa\geqslant L+A$, and $\kappa'\geqslant L'+A'$. 
Then $\Delta_\gamma(r;L,A,\lambda,\kappa)\asymp\Delta_\gamma(r;L',A',\lambda',\kappa')$.
\end{lemma}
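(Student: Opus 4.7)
The plan is to establish $\Delta_\gamma(\,\cdot\,; L, A, \lambda, \kappa) \asympleq \Delta_\gamma(\,\cdot\,; L', A', \lambda', \kappa')$; the reverse inequality follows by symmetry. The strategy is to choose constants $C_2, C_3$ depending on the two parameter tuples so that $r' := C_2 r + C_3$ satisfies, for all sufficiently large $r$,
\[\lambda'(L'^{-1}r' - A') - \kappa' \geq \lambda(L^{-1}r - A) - \kappa.\]
Then any path $p$ from $\gamma(s - r')$ to $\gamma(s + r')$ avoiding the ball $B(\gamma(s), R')$ with $R' := \lambda'(L'^{-1}r' - A') - \kappa'$ automatically avoids the smaller ball $B(\gamma(s), R)$ with $R := \lambda(L^{-1}r - A) - \kappa$. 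This is a linear comparison in $r$, solvable by taking $C_2$ large enough and $C_3$ to absorb the constant terms.

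Next, given any $s \in \R$ and any such $p$, I construct a path $q$ from $\gamma(s - r)$ to $\gamma(s + r)$ avoiding $B(\gamma(s), R)$ by prepending and appending discretized ``quasi-geodesic'' paths along $\gamma$. Namely, fix a small step $\delta > 0$ (a constant depending only on $L, A, \lambda, \kappa$) and connect $\gamma(s - r), \gamma(s - r - \delta), \gamma(s - r - 2\delta), \ldots, \gamma(s - r')$ by geodesic segments in $X$; do similarly on the other side. Each segment has length at most $L\delta + A$, so the total length of these extensions is at most $L(r' - r) + (r' - r)A/\delta + O(1)$, which is linear in $r$.

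The main technical step is verifying that these connecting geodesic segments stay outside $B(\gamma(s), R)$. For any $t$ with $|t - s| \geq r$, the $(L, A)$-quasi-geodesic property gives $d(\gamma(t), \gamma(s)) \geq L^{-1}r - A$, so by the triangle inequality any point on a geodesic of length at most $L\delta + A$ emanating from $\gamma(t)$ is at distance at least $L^{-1}r - 2A - L\delta$ from $\gamma(s)$. Since $\lambda \leq 1$, the quantity $(1 - \lambda)L^{-1}r$ dominates all the remaining constants for $r$ large enough, so this exceeds $R$. Any failure at small $r$ is absorbed into the additive constants permitted by the definition of $\asympleq$.

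Combining these steps and taking the infimum over $s$ yields
\[\Delta_\gamma(r; L, A, \lambda, \kappa) \leq \Delta_\gamma(C_2 r + C_3; L', A', \lambda', \kappa') + c_1 r + c_2.\]
The additive $c_1 r$ is then absorbed into a multiplicative constant on the $\Delta$ term via the trivial linear lower bound $\Delta_\gamma(r'; L', A', \lambda', \kappa') \geq 2r'/L' - A'$ (any admissible path has length at least the distance between its endpoints). The hardest part of the argument is the ball-avoidance bookkeeping, where $\delta$ must be balanced against $L, A, \lambda, \kappa$ to guarantee simultaneously that the connecting segments lie outside the forbidden ball for large $r$ and that the added length stays linear in $r$.
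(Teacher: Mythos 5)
Your proposal is correct and follows essentially the same route as the paper's proof: an affine comparison of radii so that one forbidden ball contains the other, extension of the witness path by discretized geodesic chains along $\gamma$ (the paper uses step size $1$ together with $\kappa'\geqslant L'+A'$ where you use a step $\delta$), and absorption of the linearly bounded added length via the trivial linear lower bound on divergence. The only cosmetic difference is the direction of the reparametrization ($r\mapsto C_2r+C_3$ versus the paper's $r\mapsto Mr-C$), which is immaterial since both inequalities are needed by symmetry.
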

\begin{proof}
Take $0<M<1$ small enough that $\frac{\lambda}{L}-\frac{\lambda'}{L'}M>0$.
Then for any sufficiently large $C\geqslant 0$ the affine function $\theta\from r\mapsto Mr-C$ satisfies: 
\[\lambda'((L')^{-1}\theta(r)-A')-2\kappa' \leqslant \lambda(L^{-1}r-A)-\kappa\]

Fix $s\in \R$ and let $P$ be any path from $\gamma(s-r)$ to
$\gamma(s+r)$ that is disjoint from the ball of radius
$\lambda(L^{-1}r - A) -\kappa$ centered at $\gamma(s)$. 
By the above inequality it is also disjoint from the ball of radius
$\lambda'((L')^{-1}\theta(r)-A')-2\kappa'$ about $\gamma(s)$.

Let $\{x_0,x_1,\dots,x_l\}$ be the set $[s-r,s-\theta(r)]\cap(\Z\cup\{s-r,s-\theta(r)\})$ in descending order and let $P_-$ be the path from $\gamma(s-\theta(r))$ to $\gamma(s-r)$ obtained by concatenating geodesics $[\gamma(x_i),\gamma(x_{i+1})]$. 
Define a path $P_+$ from $\gamma(s+r)$ to $\gamma(s+\theta(r))$ similarly. 
Since $\kappa'\geqslant L'+A'$, the paths $P_-$ and $P_+$ are disjoint from the ball of radius $\lambda'((L')^{-1}\theta(r)-A')-\kappa'$ centered at $\gamma(s)$.

Define $P'$ to be the path from $\gamma(s-\theta(r))$ to $\gamma(s+\theta(r))$ obtained by concatenating $P_-$, $P$, and $P_+$.

Now, for each $r$ choose $s$ and $P$ so that $|P|\leqslant 1+\Delta_\gamma(r;L,A,\lambda,\kappa)$.
Then $\Delta_\gamma(\theta(r);L',A',\lambda',\kappa')\leqslant |P| +
2(L(r-\theta(r))+A)$. 
Since $\gamma$ is quasi-geodesic, $r\leqslant L|P|+LA$, so the
right-hand side can be bounded by an affine function of $\Delta_\gamma(r;L,A,\lambda,\kappa)$.
This proves one direction of the equivalence. The other follows immediately by reversing the roles in the above argument.
\end{proof}

We first give an example of the relationship between divergence and
contraction.
\begin{example}\label{ex:optimal}
Let $f(r)\geqslant r$ be an increasing, invertible function.
  Consider the space $X$ constructed in \fullref{ex:necklace}, but
  this time take $|I_i|:=2i$ and $|J_i|:=f(i)$ for $i\in\N$.
Let $\gamma$ be a geodesic whose image is $Y$.
Then $\Lambda_\gamma(i,\gamma^{-1}(y_i);1,0,1,1)=f(i)$, and this is
optimal for radius $i$, so $\Delta_\gamma\asymp f$.
On the other hand, the computation of \fullref{ex:necklace} shows that
$\diam\pi_Y^0(x_i)=2f^{-1}(4r)$. 
Thus, $\pi_Y^0$ is sublinearly contracting if and only if $f^{-1}$ is
sublinear, and, in this case, it is $(r,\rho)$--contracting for
$\rho\asymp f^{-1}$.
\end{example}

Our next proposition proves the implication $\eqref{item:csld}\implies\eqref{item:morse_qgeodesic}$ of \fullref{theorem:morse_equiv_csld}. 
It also gives a quantitave link between high divergence and contraction.

\begin{definition}
We say a function $g$ is \emph{completely super--}$f$ if for every
choice of $C_1>0$, $C_2>0$, $C_3\geqslant 0$, and $C_4\geqslant 0$ the collection of $r\in[0,\infty)$ such that $g(r)\leqslant C_1f(C_2r+C_3)+C_4$ is bounded. 
\end{definition}

\begin{proposition}\label{prop:csld_implies_sublinear_contraction}
Let $\gamma$ be a quasi-geodesic in a geodesic metric space $X$.
Suppose the empty set is not in the image of $\pi_\gamma^\almost$.
Let $f(r)\geqslant r$ be an increasing, invertible function.
If $\gamma$ has completely super--$f$ divergence, then there exists a function $\rho$ such that $\pi_\gamma^\almost$ is $(r,\rho)$--contracting and $\lim_{r\to\infty}\frac{\rho(r)}{f^{-1}(r)}=0$.

In particular, if $\gamma$ has completely superlinear divergence then there exists a sublinear function $\rho$ such that $\pi_\gamma^\almost$ is $(r,\rho)$--contracting.
\end{proposition}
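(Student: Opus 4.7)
The plan is to argue by contrapositive using the \emph{optimal} contraction function
\[\rho(r):=\sup\bigl\{\diam\pi_\gamma^\almost(x)\cup\pi_\gamma^\almost(x')\bigm| d(x,x')\leqslant d(x,\gamma)\leqslant r\bigr\},\]
which is non-decreasing with $\rho(r)\leqslant 4r+2\almost$. If $\rho(r)/f^{-1}(r)\to 0$ then $\pi_\gamma^\almost$ is $(r,\rho)$--contracting and we are done. Otherwise there exist $\epsilon>0$ and $r_n\to\infty$ with $\rho(r_n)\geqslant\epsilon f^{-1}(r_n)$, and hence (possibly after replacing $x_n'$ by $x_n$ when the diameter is realized inside a single projection image) pairs $(x_n,x_n')$ with $d(x_n,x_n')\leqslant d(x_n,\gamma)\leqslant r_n$ and $p_n\in\pi_\gamma^\almost(x_n)$, $p_n'\in\pi_\gamma^\almost(x_n')$ satisfying $D_n:=d(p_n,p_n')\geqslant\tfrac{\epsilon}{2}f^{-1}(r_n)$ for large $n$. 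From each such pair I will build a short path from $p_n$ to $p_n'$ that avoids a large ball on $\gamma$, contradicting completely super-$f$ divergence.

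Drop subscripts and set $r=d(x,\gamma)$, $r'=d(x',\gamma)$. Form the concatenation of geodesics $\alpha:=[p,x][x,x'][x',p']$; since $r'\leqslant r+d(x,x')\leqslant 2r$, its length is at most $4r+2\almost$. Parametrize $\gamma$ as an $(L,A)$--quasi-geodesic with $p=\gamma(a)$, $p'=\gamma(b)$, $a<b$, and set $s:=(a+b)/2$, $R:=(b-a)/2$, $q:=\gamma(s)$; the quasi-geodesic inequalities give $\min\{d(p,q),d(p',q)\}\geqslant cD$ for a constant $c>0$ depending on $L,A$ once $D$ is large. The crux is the lower bound $d(\alpha,q)\geqslant c'D-O(\almost)$, which I establish segment by segment. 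For $y\in[p,x]$ at distance $t$ from $p$, the inequalities $d(y,q)\geqslant d(p,q)-t$ and $d(y,q)\geqslant d(x,q)-(d(p,x)-t)\geqslant t-\almost$ (using $q\in\gamma\Rightarrow d(x,q)\geqslant r$ and $d(p,x)\leqslant r+\almost$) combine by maximization; the segment $[x',p']$ is symmetric. For $y\in[x,x']$ at distance $u$ from $x'$, the bound $d(y,q)\geqslant u$ follows from $d(x,q)\geqslant r\geqslant d(x,x')$, and is paired with $d(y,q)\geqslant d(x',q)-u$, where $d(x',q)\geqslant\max\{r',\,d(p',q)-(r'+\almost)\}\geqslant cD-O(\almost)$ regardless of the size of $r'$. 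This last case-split is what I expect to be the main obstacle: when $x'$ is near $\gamma$ the segment $[x,x']$ may dip arbitrarily close to $\gamma$, yet it must still stay far from the specific point $q$, precisely because $q$ is far from the projection $p'$ of $x'$.

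For $\lambda$ chosen small and $\kappa$ chosen large in terms of $L,A,\almost,c$, the ball of radius $\lambda(L^{-1}R-A)-\kappa$ at $\gamma(s)$ lies inside the avoidance region of $\alpha$ for $D$ large, giving
\[\Delta_\gamma(R;L,A,\lambda,\kappa)\leqslant|\alpha|\leqslant 4r+2\almost.\]
Since $R$ and $D$ are commensurate up to the quasi-geodesic constants, and $D\geqslant\tfrac{\epsilon}{2}f^{-1}(r)$, there is a constant $C$ with $r\leqslant f(CR)$, so $\Delta_\gamma(R_n)\leqslant 4f(CR_n)+2\almost$ along an unbounded sequence $R_n$. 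This contradicts completely super-$f$ divergence of $\Delta_\gamma$ (invoking \fullref{lemma:divergence_welldefined} to pass freely between divergence functions with different parameters), which forces $\rho(r)/f^{-1}(r)\to 0$ and completes the proof.
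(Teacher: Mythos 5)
Your proposal is correct and follows essentially the same route as the paper's proof: both take the optimal contraction function, assume $\limsup\rho(r)/f^{-1}(r)>0$ for contradiction, form the concatenation $[p,x][x,x'][x',p']$, show it avoids a definite ball around the parameter-midpoint $\gamma(s)$ of $\gamma$ between the two projection points, and derive a bound $\Delta_\gamma(R_n)\leqslant 4f(CR_n)+O(\almost)$ along an unbounded sequence, contradicting complete super-$f$ divergence. The only cosmetic difference is that you bound $d(\alpha,\gamma(s))$ from below directly segment-by-segment via a maximization of two linear estimates, whereas the paper assumes a point of the path is within $\lambda R_n$ of $\gamma(s)$ and deduces $R_n$ is bounded; these are the same computation.
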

\begin{proof} 
Let $\gamma$ be an $(L,A)$-quasi-geodesic.
Define:
\[\rho(r):=\sup_{d(x,y)\leqslant d(x,\gamma)\leqslant r}\diam\pi_\gamma^\almost(x)\cup\pi_\gamma^\almost(y)\]
To see that $\pi_\gamma^\almost$ is $(r,\rho)$--contracting we must show that $\rho$ is sublinear. 
Since $f(r)\geqslant r$, it suffices to prove the second claim: \[\lim_{r\to\infty}\frac{\rho(r)}{f^{-1}(r)}= 0\]

Suppose for a contradiction that $\limsup_{r\to\infty}\frac{\rho(r)}{f^{-1}(r)}>0$.
Then there exist $c>0$; sequences $(x_n)$ and $(y_n)$ with $x_n,\,
y_n\in X$,  $d(x_n,\gamma)\geqslant n$, 
and $d(x_n,y_n)\leqslant d(x_n,\gamma)$; and $x'_n\in\pi_\gamma^\almost(x_n)$ and $y'_n\in\pi_\gamma^\almost(y_n)$ such that: 
\begin{equation}
  \label{eq:8}
c f^{-1}(d(x_n,\gamma))\leqslant d(x'_n,y'_n)  
\end{equation}

Let $a_n$ and $b_n$ be such that $\gamma(a_n-b_n)=x'_n$ and
$\gamma(a_n+b_n)=y'_n$.
Define $m_n:=\gamma(a_n)$ and $R_n:=\frac{b_n}{L}-A$.
Since $\gamma$ is an $(L,A)$--quasi-geodesic,
$d(m_n,\{x'_n,y'_n\})\geqslant R_n$ and $b_n\geqslant \frac{d(x'_n,y'_n)-A}{2L}$.
By \eqref{eq:8} and the facts that $f^{-1}$ is unbounded and
increasing, $\lim_{n\to\infty} R_n=\infty$.

Choose $0<\lambda<\frac{1}{4}$ and $\kappa:=L+A$.

If there is a geodesic from $x_n$ to $y_n$ containing a point
$z$ such that $d(z,m_n)\leqslant \lambda R_n$, then:
\begin{align*}
  R_n&\leqslant d(y'_n,m_n)\\
&\leqslant d(y'_n,y_n) +d(y_n,z)+d(z,m_n)\\
&\leqslant d(y_n,\gamma)+\almost +d(y_n,z)+d(z,m_n)\\
&\leqslant \almost +2(d(y_n,z)+d(z,m_n))\\
&\leqslant \almost +2\lambda R_n+2d(y_n,z)\\
&= \almost
  +2\lambda R_n+2(d(x_n,y_n)-d(z,x_n))\\
&\leqslant \almost +2\lambda
  R_n+2(d(x_n,\gamma)-(d(x_n,\gamma)-\lambda R_n))\\
&= \almost +4\lambda R_n
\end{align*}
Thus, $R_n\leqslant \frac{\almost}{1-4\lambda}$.

If there is a geodesic from $x_n$ to $x_n'$ or from $y_n$ to $y_n'$
containing a point $z$ such that $d(z,m_n)\leqslant \lambda R_n$, then
a similar argument shows $R_n\leqslant \frac{\almost}{1-2\lambda}$.

Since $R_n\to\infty$, for all sufficiently large $n$ and
any choice of path $p_n$ that is a concatenation of geodesics
$[x'_n,x_n]$, $[x_n,y_n]$, $[y_n,y'_n]$, the path $p_n$ remains
outside the ball of radius $\lambda R_n$ about $m_n$.
This gives us a path of length at most $4d(x_n,\gamma)+2\almost$ from $\gamma(a_n-b_n)$ to $\gamma(a_n+b_n)$ that
remains outside the ball of radius $\lambda
\left(\frac{b_n}{L}-A\right)$ about $\gamma(a_n)$.

On the other hand, \eqref{eq:8} implies:
\[d(x_n,\gamma)\leqslant f\left(\frac{1}{c}d(x_n',y_n')\right)\leqslant f\left(\frac{2b_nL+A}{c}\right)\]
 We conclude that for all sufficiently large $n$ the $(L,A,\lambda,\kappa)$--divergence of $\gamma$
 evaluated at $b_n$ is at most $2\almost+4
 f\left(\frac{2b_nL+A}{c}\right)$, which contradicts the hypothesis
 that the divergence is completely super--$f$.
\end{proof}

The previous result can be strengthened to the statement:
\begin{proposition}\label{prop:stengthened}
Let $f$ be an increasing, invertible, completely superlinear function
satisfying the following additional condition:
\begin{equation}
  \label{eq:9}
\parbox{0.9\textwidth}{
For every $C$ there exists some $D$ such that 
for all $r>1$ and $k>D$ we have  $f(kr)>Cf(Cr+C)+C$.}  \tag{$\ast$} 
\end{equation}

If the divergence of $\gamma$ is at least $f$ then $\gamma$ is
$(r,\rho)$--contracting for some function  $\rho\preceq f^{-1}$.
\end{proposition}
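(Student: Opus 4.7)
The plan is to adapt the proof of \fullref{prop:csld_implies_sublinear_contraction}, sharpening the contradiction by exploiting condition~\eqref{eq:9}. Suppose for contradiction that the optimal contraction function
\[
\rho(r):=\sup_{d(x,y)\leqslant d(x,\gamma)\leqslant r}\diam\pi_\gamma^\almost(x)\cup\pi_\gamma^\almost(y)
\]
fails to satisfy $\rho\asympleq f^{-1}$. Diagonalising the negation of $\asympleq$ produces $x_n,y_n\in X$ with $d(x_n,y_n)\leqslant s_n:=d(x_n,\gamma)$ and projection points $x_n'\in\pi_\gamma^\almost(x_n)$, $y_n'\in\pi_\gamma^\almost(y_n)$ satisfying the strong separation estimate
\[
d(x_n',y_n')\geqslant n\,f^{-1}(ns_n+n).
\]
Unboundedness of $f^{-1}$ together with $d(x_n',y_n')\leqslant 2(s_n+\almost)$ forces $s_n\to\infty$.

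Parametrising $\gamma(a_n\pm b_n)=x_n',y_n'$ and setting $R_n:=b_n/L-A$, the bound $d(x_n',y_n')\leqslant 2Lb_n+A$ combined with the separation estimate yields $b_n/n\to\infty$, so in particular $R_n\to\infty$. For $0<\lambda<1/4$ and $\kappa:=L+A$ the argument in the proof of \fullref{prop:csld_implies_sublinear_contraction} applies verbatim: for $n$ large the concatenation $[x_n',x_n][x_n,y_n][y_n,y_n']$ avoids the ball of radius $\lambda R_n$ about $\gamma(a_n)$, whence
\[
\Delta_\gamma(b_n;L,A,\lambda,\kappa)\leqslant 4s_n+2\almost.
\]
Inverting the separation estimate using monotonicity of $f$ gives $ns_n\leqslant f((2Lb_n+A)/n)$, so
\[
\Delta_\gamma(b_n)\leqslant \tfrac{4}{n}\,f\!\left(\tfrac{2Lb_n+A}{n}\right)+2\almost.
\]

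The divergence hypothesis $\Delta_\gamma\asympgeq f$ supplies constants with $f(r)\leqslant C_1\Delta_\gamma(C_2r+C_3)+C_4$. Writing $r_n:=(b_n-C_3)/C_2$ and $t_n:=(2Lb_n+A)/n$, this combines with the preceding bound to yield
\[
f(r_n)\leqslant \tfrac{C'}{n}f(t_n)+C'',
\]
where $C'$ and $C''$ depend only on $L$, $A$, $\almost$, and the divergence constants. The key observation is that $r_n/t_n\sim n/(2LC_2)\to\infty$, which is exactly the growth condition that~\eqref{eq:9} exploits: given any $C>0$, for $n$ sufficiently large one chooses $r:=t_n/C$ and $k:=r_n/r$, so that $k$ exceeds the threshold $D(C)$ from~\eqref{eq:9} and
\[
f(r_n)=f(kr)>Cf(Cr+C)+C\geqslant Cf(t_n)+C.
\]
Picking $C$ larger than both $C'$ and $C''$ makes the two bounds on $f(r_n)$ incompatible once $n$ is large, since $t_n\to\infty$ and $f$ is unbounded. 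The main technical hurdle is precisely this quantifier bookkeeping: tracking how the affine constants from the divergence hypothesis propagate into $C'$ and $C''$, and verifying that the scale separation $r_n/t_n\to\infty$ is indeed fast enough for~\eqref{eq:9} to beat the upper bound.
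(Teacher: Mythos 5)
Your argument is correct and is essentially the paper's own proof: negate $\rho\preceq f^{-1}$ to get the separation estimate $d(x_n',y_n')\geqslant nf^{-1}(\cdot)$, run the construction of \fullref{prop:csld_implies_sublinear_contraction} to bound $\Delta_\gamma(b_n)$ above by $\tfrac{4}{n}f\bigl(\tfrac{2Lb_n+A}{n}\bigr)+2\almost$, and then let condition \eqref{eq:9} contradict the lower divergence bound. The only differences are bookkeeping: you carry the affine constants from interpreting ``divergence at least $f$'' up to $\asymp$ (the paper uses the literal inequality $f(b_n)\leqslant\Delta_\gamma(b_n)$), and by first establishing $b_n/n\to\infty$ you apply \eqref{eq:9} with $r=t_n/C$, $k=Cr_n/t_n$ and thereby avoid the paper's case split on whether $c_n=b_n/n$ is bounded.
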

\begin{proof}
For a contradiction we suppose that $\rho\not\preceq f^{-1}$ and replace
\eqref{eq:8} with $d(x'_n,y'_n)\geqslant n f^{-1}(d(x_n,\gamma))$. 
Using the
same method as in the proof of \fullref{prop:csld_implies_sublinear_contraction}, we deduce that for all sufficiently large $n$ the $(L,A,\lambda,\kappa)$--divergence of $\gamma$
 evaluated at $b_n$ is at most $2\almost+4
 f\left(\frac{2b_nL+A}{n}\right)$. 
Thus, $f(b_n)\leqslant 2\almost+4
 f\left(\frac{2b_nL+A}{n}\right)$.
Let $c_n:=b_n/n$ and $M:=\max\{2\almost, 4, 2L, A\}$.
Then, since $f$ is increasing:
\begin{equation}
  \label{eq:10}
  f(nc_n)\leqslant Mf(Mc_n+M)+M
\end{equation}
The left-hand side is unbounded as $n$ grows, so we immediately obtain
a contradiction if the sequence $(c_n)_{n\in\N}$ is bounded.
If the sequence is unbounded then, by passing to a subsequence, we may
assume $c_n>1$ for all $n$. 
In this case the inequality \eqref{eq:10} holds for all $n$, which contradicts
condition  \eqref{eq:9}.
\end{proof}
Suitable functions $f$ for \fullref{prop:stengthened} include $f(r):=r^d$, $r^d/\log(r)$, $r\log(r)$
and $d^r$ for any $d>1$. 
The function $f(r):=2^{2^{2^{1+\lfloor \log_2\log_2 r \rfloor}}}$ is
completely superlinear, but does not satisfy \eqref{eq:9}, since
$f(n2^{2^{n-1}})=f(2^{2^{n-1}})$ for all $n\in\N$.

\begin{corollary} If a quasi-geodesic $\gamma$ has divergence at least $r^k$ then $\gamma$ is $(r,r^{1/k})$--contracting. 
If it has exponential divergence, then $\gamma$ is logarithmically contracting.
Finally, if it has infinite divergence, then it is strongly contracting.
\end{corollary}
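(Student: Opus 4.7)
The plan is to derive each of the three claims by applying \fullref{prop:stengthened}, except for the infinite-divergence case, which requires a direct modification of the argument in \fullref{prop:csld_implies_sublinear_contraction}.

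For polynomial divergence, set $f(r):=r^k$. Given $C>0$, for $r\geqslant 1$ we have $Cf(Cr+C)+C\leqslant 2^kC^{k+1}r^k+C$, while $f(mr)=m^kr^k$. Hence any $m$ with $m>(2^kC^{k+1}+C)^{1/k}=:D$ makes $f(mr)>Cf(Cr+C)+C$, verifying \eqref{eq:9}. \fullref{prop:stengthened} then gives $\rho\preceq f^{-1}=r^{1/k}$; since the pair $(\rho_1,\rho_2)$ may be replaced by any asymptotically equivalent pair, the multiplicative and additive constants implicit in $\preceq$ are absorbed, and $\gamma$ is $(r,r^{1/k})$--contracting. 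For exponential divergence, set $f(r):=d^r$. A direct estimate gives $Cf(Cr+C)+C\leqslant 2Cd^{Cr+C}$, while $d^{mr}>2Cd^{Cr+C}$ holds as soon as $(m-C)r-C>\log_d(2C)$; for $r>1$ this is guaranteed whenever $m>2C+\log_d(2C)=:D$, verifying \eqref{eq:9}. \fullref{prop:stengthened} then gives $\rho\preceq\log_d$, so $\gamma$ is logarithmically contracting.

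For infinite divergence no invertible function $f$ has bounded inverse, so I would not invoke \fullref{prop:stengthened} but instead repeat the skeleton of \fullref{prop:csld_implies_sublinear_contraction}. Assume for contradiction that the optimal contraction function
\[
\rho(r):=\sup_{d(x,y)\leqslant d(x,\gamma)\leqslant r}\diam\pi_\gamma^\almost(x)\cup\pi_\gamma^\almost(y)
\]
is unbounded. Extract sequences $x_n,y_n\in X$ with $d(x_n,y_n)\leqslant d(x_n,\gamma)$ and projection points $x'_n=\gamma(a_n-b_n)$, $y'_n=\gamma(a_n+b_n)$ with $d(x'_n,y'_n)\to\infty$, so $b_n\to\infty$. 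The same case analysis as in the earlier proof, applied to the concatenated path $[x'_n,x_n][x_n,y_n][y_n,y'_n]$, shows that for all sufficiently large $n$ this path of length at most $4d(x_n,\gamma)+2\almost$ remains outside the ball of radius $\lambda(b_n/L-A)$ around $\gamma(a_n)$. Hence $\Delta_\gamma(b_n)$ is finite for arbitrarily large $b_n$, contradicting infinite divergence. Therefore $\rho$ is bounded, i.e.\ $\gamma$ is strongly contracting.

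The main anticipated difficulty is the infinite-divergence case, since no single invertible growth function captures the hypothesis in a form suited to \fullref{prop:stengthened}. However, the argument of \fullref{prop:csld_implies_sublinear_contraction} produces an explicit path whose length is controlled purely by $d(x_n,\gamma)$ and $\almost$, so infinite divergence at $b_n$ provides an immediate contradiction. The remaining bookkeeping in the first two cases---translating $\rho\preceq r^{1/k}$ and $\rho\preceq\log_d$ into the exact forms stated in the corollary---is routine, since the class of $(\rho_1,\rho_2)$--contracting projections depends only on the $\asymp$--class of $(\rho_1,\rho_2)$.
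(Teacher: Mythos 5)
Your proof is correct and takes essentially the route the paper intends: the first two cases are exactly the verification of condition \eqref{eq:9} for $f(r)=r^k$ and $f(r)=d^r$ followed by \fullref{prop:stengthened}, and the infinite-divergence case is the direct rerun of the argument of \fullref{prop:csld_implies_sublinear_contraction}, which exhibits a path of length at most $4d(x_n,\gamma)+2\almost$ witnessing $\Delta_\gamma(b_n)<\infty$ for arbitrarily large $b_n$. One small caveat: your closing justification that $(\rho_1,\rho_2)$--contraction depends only on the $\asymp$--class of the pair is false in the first coordinate (\fullref{ex:log} together with \fullref{prop:Abel} shows $(r/2,2)$--contracting need not be $(r,O(1))$--contracting even though $r/2\asymp r$), but what you actually use---that $\rho_2$ may be enlarged to any bigger sublinear function, so $\rho\preceq f^{-1}$ yields $(r,\rho_2)$--contraction for some $\rho_2\asymp f^{-1}$---is valid and is all the corollary requires.
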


Here infinite divergence means $\Delta_\gamma(r)=\infty$ for all $r$ large enough. \fullref{ex:optimal} shows these conclusions are optimal.

\bigskip

We now address the implication
$\eqref{item:morse_qgeodesic}\implies\eqref{item:csld}$ of
\fullref{theorem:morse_equiv_csld}. 
 In this direction we can show that the Morse property implies completely superlinear
 divergence, but we do not get explicit control of the divergence
 function in terms of the Morse function, see
 \fullref{lem:morseimpliescsld}.

There is one special case in which we can say more. 
Charney and Sultan \cite{ChaSul15} recently gave a proof\footnote{The
  original proof of this fact is due to Behrstock and Dru{\c{t}}u \cite{BehDru14}, by
  different methods.} that if $\alpha$ is a Morse
geodesic in a CAT(0) space then $\alpha$ has at least quadratic
divergence. 
Essentially the same argument gives a general result:
\begin{proposition}
  Let $\alpha$ be a geodesic in a geodesic metric space $X$. 
If $\alpha$ is $(\rho_1,\rho_2)$--contracting with $\rho_2$ bounded, then $\Delta_\alpha(r)\asympgeq r\rho_1(r)$.
\end{proposition}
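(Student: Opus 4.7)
The strategy is a discretization of the classical Charney--Sultan / Behrstock--Dru{\c{t}}u divergence bound for Morse geodesics in CAT(0) spaces: slice an arbitrary ``avoiding'' path $P$ into pieces, across each of which the almost-closest-point projection advances on $\alpha$ by at most $B:=\sup\rho_2$, and lower-bound each piece's length by $\rho_1(d(\cdot,\alpha))$. Fix $\almost$ with $\pi^\almost_\alpha$ being $(\rho_1,B)$--contracting. By \fullref{lemma:divergence_welldefined} it suffices to estimate $\Delta_\alpha(r;1,0,1,1)$, so let $P$ be an arc-length parametrized path from $\alpha(s-r)$ to $\alpha(s+r)$ that avoids the ball of radius $r-1$ about $\alpha(s)$.

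Set $t_0:=0$ and recursively define $t_{i+1}:=\min\{|P|,\,t_i+\rho_1(d(P(t_i),\alpha))\}$, stopping at the first index $k$ with $t_{k+1}=|P|$. Let $x_i:=P(t_i)$, pick $p_i\in\pi^\almost_\alpha(x_i)$ with $p_0:=\alpha(s-r)$ and $p_{k+1}:=\alpha(s+r)$, and write $p_i=\alpha(\tau_i)$. Arc-length parametrization yields $d(x_i,x_{i+1})\leqslant t_{i+1}-t_i\leqslant\rho_1(d(x_i,\alpha))$, so contraction gives $|\tau_{i+1}-\tau_i|=d(p_i,p_{i+1})\leqslant B$, while $\tau_0=s-r$ and $\tau_{k+1}=s+r$.

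The main technical step, and the one I expect to be the principal obstacle, is a pigeonhole showing that many $\tau_i$ lie in $[s-r/2,s+r/2]$. The sequence $(\tau_i)$ may oscillate, but if $j$ is the largest index with $\tau_j<s-r/2$ and $i^*$ the smallest subsequent index with $\tau_{i^*}>s+r/2$, every intermediate $\tau_i$ lies in the middle interval; the jump bound forces $\tau_{j+1}\leqslant s-r/2+B$ and $\tau_{i^*-1}\geqslant s+r/2-B$, and the resulting displacement $\tau_{i^*-1}-\tau_{j+1}\geqslant r-2B$ with step size $\leqslant B$ requires $|I|\geqslant r/B-2$, where $I:=\{j+1,\dots,i^*-1\}$.

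For each $i\in I$ we have $d(x_i,\alpha(s))\geqslant r-1$ (as $x_i\in P$) and $d(\alpha(\tau_i),\alpha(s))\leqslant r/2$, so the triangle inequality combined with $d(x_i,\alpha(\tau_i))\leqslant d(x_i,\alpha)+\almost$ yields $d(x_i,\alpha)\geqslant r/2-1-\almost$. Monotonicity of $\rho_1$ then gives
\[
|P|\;\geqslant\; t_k\;=\;\sum_{i=0}^{k-1}\rho_1(d(x_i,\alpha))\;\geqslant\;(|I|-1)\,\rho_1\!\left(\tfrac{r}{2}-1-\almost\right)\;\asympgeq\; r\,\rho_1(r),
\]
and taking the infimum over $P$ and $s$ concludes $\Delta_\alpha(r)\asympgeq r\rho_1(r)$.
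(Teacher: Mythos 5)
You are supplying a proof where the paper offers none (it merely asserts that the Charney--Sultan argument for CAT(0) Morse geodesics generalizes), and your discretize--project--pigeonhole scheme is exactly the intended argument; in particular the pigeonhole in your third paragraph and the distance estimate $d(x_i,\alpha)\geqslant r/2-1-\almost$ for $i\in I$ are correct. However, the recursion defining the $t_i$ is broken as written. The contraction axioms only guarantee that $\rho_1$ is eventually non-negative and satisfies $\rho_1(t)\leqslant t$, so $\rho_1(0)\leqslant 0$; since $x_0=P(0)$ lies on $\alpha$, the very first step size $\rho_1(d(x_0,\alpha))=\rho_1(0)$ is non-positive and the recursion stalls or moves backwards, so the terminal index $k$ never exists. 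The same defect recurs whenever $P$ dips back near $\alpha$, and it also undermines your final display: discarding the terms with $i\notin I$ from $\sum_{i=0}^{k-1}\rho_1(d(x_i,\alpha))$ requires those terms to be non-negative, which is not guaranteed.

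The repair is routine but must be made. Replace the step size by $\max\bigl\{1,\rho_1(d(P(t_i),\alpha))\bigr\}$; this forces termination after at most $\lceil|P|\rceil$ steps and makes every summand positive. For indices where the maximum is attained by $\rho_1$ the contraction hypothesis still gives $d(p_i,p_{i+1})\leqslant B$. For the remaining indices one has $d(x_i,\alpha)\leqslant R_1:=\sup\{t\mid\rho_1(t)<1\}<\infty$ (finite since $\rho_1$ is non-decreasing and unbounded) and $d(x_i,x_{i+1})\leqslant 1$, so the triangle inequality yields $d(p_i,p_{i+1})\leqslant 2R_1+2\almost+2$. Running your pigeonhole with the jump bound $B':=\max\{B,\,2R_1+2\almost+2\}$ in place of $B$ gives $|I|\geqslant r/B'-2$, and for $i\in I$ and $r$ large enough that $\rho_1(r/2-1-\almost)\geqslant 1$ each such step contributes $\rho_1(r/2-1-\almost)$ to $|P|$, so the conclusion $\Delta_\alpha(r)\asympgeq r\rho_1(r)$ survives, with constants depending also on $\rho_1$ and $\almost$ --- which is all the statement requires.
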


\begin{lem}\label{lem:constructingquasigeods}
Let $X$ be a geodesic metric space.
Let $a,b,c,d\in X$ and $r>0$ satisfy the following conditions:
\begin{enumerate}
 \item $d(a,d)\geqslant r$
 \item There exists a path $\gamma$ from $a$ to $d$ passing through
   $b$ and $c$ such that the length of $\gamma$ is at most $Cr$ 
   and such that $[a,b]_\gamma$, $[b,c]_\gamma$,
   and $[c,d]_\gamma$ are continuous $(L,0)$--quasi-geodesics.
 \item The path $\gamma$ does not contain a point within distance $\lambda r$
   of $e$, where $e$ is the midpoint of a geodesic from $a$ to $d$.
\end{enumerate}
For any $L'>\max\{L,C,C/\lambda\}\geqslant 1$ there exists a continuous $(L',0)$--quasi-geodesic $\gamma'$ from $a$ to $d$ of length at most $|\gamma|$ such that $\gamma'$ does not contain a point within distance $\lambda r/2$ of $e$.
\end{lem}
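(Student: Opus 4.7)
The plan is to adapt the shortcut construction used in the proof of \fullref{prop:morseimpliessublinearcontraction}, applied with the constant $L'$ in place of $L$. Define $D(p,q) := L'\,d(p,q) - |[p,q]_\gamma|$ for pairs $(p,q)$ on $\gamma$ with $p$ preceding $q$. Since $L' > C$ and $|\gamma| \leqslant Cr \leqslant C\,d(a,d)$, we have $D(a,d) \geqslant L'r - Cr > 0$. Moreover, each of the three pieces $[a,b]_\gamma$, $[b,c]_\gamma$, $[c,d]_\gamma$ is a continuous $(L,0)$--quasi-geodesic with $L \leqslant L'$, so for any pair $(p,q)$ lying in a common piece we have $|[p,q]_\gamma| \leqslant L\,d(p,q) \leqslant L'\,d(p,q)$; that is, $D \geqslant 0$ on such pairs. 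Consequently, $D$ can take a negative value only on pairs straddling at least one of the junctions $b,c$.

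If $D \geqslant 0$ everywhere on $\gamma$, set $\gamma' := \gamma$. Otherwise, following the trichotomy in the proof of \fullref{prop:morseimpliessublinearcontraction}, select at most two subsegments $[p_i,q_i]_\gamma$ maximal among subsegments on whose endpoints $D$ takes a non-positive value (so $D(p_i,q_i) = 0$ by maximality, and each $[p_i,q_i]_\gamma$ spans one or both of $b,c$) and replace each by a geodesic segment $[p_i,q_i]$. The verifications from that proof carry over with $L'$ in place of $L$: the only role played by ``the complementary pieces are geodesics'' was to ensure $D \geqslant 0$ within a single subsegment, and the $(L,0)$--quasi-geodesic hypothesis together with $L \leqslant L'$ gives precisely this. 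Hence $\gamma'$ is a continuous $(L',0)$--quasi-geodesic.

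For the length, each shortcut replaces a subsegment of length $|[p_i,q_i]_\gamma|$ by a geodesic of length $d(p_i,q_i) = |[p_i,q_i]_\gamma|/L' \leqslant |[p_i,q_i]_\gamma|$, so $|\gamma'| \leqslant |\gamma|$. For the ball avoidance, points of $\gamma'$ lying on $\gamma$ stay at distance $\geqslant \lambda r > \lambda r/2$ from $e$. For a point $z$ on a shortcut $[p_i,q_i]$, both endpoints satisfy $d(p_i,e), d(q_i,e) \geqslant \lambda r$, and the shortcut length is bounded by
\[
d(p_i,q_i) \;=\; \frac{|[p_i,q_i]_\gamma|}{L'} \;\leqslant\; \frac{|\gamma|}{L'} \;\leqslant\; \frac{Cr}{L'} \;<\; \lambda r,
\]
using $L' > C/\lambda$. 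Thus $\min\{d(z,p_i),d(z,q_i)\} \leqslant d(p_i,q_i)/2 < \lambda r/2$, whence $d(z,e) \geqslant \lambda r - d(p_i,q_i)/2 > \lambda r/2$.

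The main obstacle is bookkeeping rather than conceptual: one must check that the detailed case analysis of \fullref{prop:morseimpliessublinearcontraction} never secretly uses geodesicity of the subsegments beyond non-negativity of $D$ within a subsegment. Once this is confirmed, length monotonicity and avoidance of the smaller ball both fall out immediately from the strict inequality $L' > C/\lambda$.
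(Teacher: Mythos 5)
Your proof is correct and follows essentially the same route as the paper's: the paper likewise performs the shortcut construction of Proposition~\ref{prop:morseimpliessublinearcontraction} with $L'$ in place of $L$ and concludes with the same estimate $d(p,q)\leqslant|\gamma|/L'<\lambda r$, hence $d(z,e)\geqslant d(\gamma,e)-d(p,q)/2>\lambda r/2$. Your additional checks (that $D(a,d)>0$ since $L'>C$, and that $D\geqslant 0$ within each quasi-geodesic piece, which is what makes the case analysis exhaustive) are exactly the points the paper leaves implicit.
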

\begin{proof}
The construction of $\gamma'$ is exactly as in \fullref{prop:morseimpliessublinearcontraction} with $L$ replaced by $L'$.
This involves finding points $p$ and $q$ on $\gamma$ such that
$L'd(p,q)=|[p,q]_\gamma|$ and replacing $[p,q]_\gamma$ by a geodesic
with the same endpoints. 
Now, $d(p,q)\leqslant |\gamma|/L'<\lambda r$, so for any point $z$ on
a newly introduced geodesic segment we have $d(z,e)\geqslant
d(\gamma,e)-d(p,q)/2>\lambda r/2$.
\end{proof}

\begin{proposition}\label{lem:morseimpliescsld} 
Let $\gamma$ be a Morse quasi-geodesic in a geodesic metric space $X$. Then the divergence of $\gamma$ is completely superlinear.
\end{proposition}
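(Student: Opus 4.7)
The plan is to argue by contradiction: I will suppose the divergence of $\gamma$ fails to be completely superlinear, extract a sequence of low-divergence witness paths with endpoints on $\gamma$, convert them to continuous quasi-geodesics, and use the Morse property via an intermediate value argument to obtain a contradiction. So assume $\gamma$ is an $(L, A)$-quasi-geodesic and $\Delta_\gamma$ is not completely super-$\mathrm{Id}$. Unpacking the definition, there is a constant $K > 0$ and an unbounded sequence $r_n \to \infty$ with $\Delta_\gamma(r_n; L, A, \lambda, \kappa) \leq K r_n$ for all large $n$. For each such $n$ I pick $s_n \in \R$ and a continuous path $P_n$ from $a_n := \gamma(s_n - r_n)$ to $d_n := \gamma(s_n + r_n)$ of length at most $K r_n + 1$, disjoint from the ball $B(\gamma(s_n), R_n)$ with $R_n := \lambda(L^{-1} r_n - A) - \kappa \to \infty$. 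Since $d(a_n, d_n) \geq 2 r_n/L - A$, the ratio $|P_n|/d(a_n, d_n)$ is bounded above by a constant depending only on $K$, $L$, and $A$.

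Next I convert $P_n$ into a continuous $(L', 0)$-quasi-geodesic $\alpha_n$ with the same endpoints, for a fixed $L' > 2KL/\lambda$. This uses the shortcut idea employed in the proofs of \fullref{prop:morseimpliessublinearcontraction} and \fullref{lem:constructingquasigeods}: iteratively replace maximal subsegments $[p, q]_{P_n}$ on which $L' d(p, q) < |[p, q]_{P_n}|$ by a geodesic from $p$ to $q$. Each inserted geodesic has length at most $|P_n|/L' \leq K r_n / L'$, so every point of $\alpha_n$ lies within this distance of $P_n$. My choice of $L'$ ensures that for large $n$ this is strictly less than $R_n / 2$, so $\alpha_n$ remains disjoint from $B(\gamma(s_n), R_n/2)$.

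Since $\gamma$ is $\mu$-Morse, $\alpha_n$ lies in $\overline{N}_\nu(\gamma)$ where $\nu := \mu(L', 0)$. I parametrize $\alpha_n \colon [0, T_n] \to X$ by arc length and, for each $t$, choose $u(t) \in \R$ with $d(\alpha_n(t), \gamma(u(t))) \leq \nu$, taking $u(0) := s_n - r_n$ and $u(T_n) := s_n + r_n$. Combining the $(L, A)$-quasi-geodesic inequality for $\gamma$ with $d(\alpha_n(t), \alpha_n(t')) \leq |t - t'|$ yields the coarse Lipschitz bound $|u(t) - u(t')| \leq L(|t - t'| + 2\nu + A)$. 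Applying a discrete intermediate value argument to any partition of $[0, T_n]$ of mesh at most $1$ produces some $t^*$ with $|u(t^*) - s_n| \leq L(1 + 2\nu + A)$, and hence
\[
d(\alpha_n(t^*), \gamma(s_n)) \leq \nu + L \cdot L(1 + 2\nu + A) + A,
\]
a constant independent of $n$. But $R_n/2$ exceeds this constant for large $n$, contradicting the disjointness of $\alpha_n$ from $B(\gamma(s_n), R_n/2)$.

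The hard step will be the shortcut conversion: I must verify both that $\alpha_n$ is a bona fide continuous $(L', 0)$-quasi-geodesic with length at most $|P_n|$, and that each inserted geodesic piece genuinely stays within the claimed distance of $P_n$ so that the avoidance of the smaller ball is preserved. The intermediate value step is conceptually easy but requires some care because the auxiliary function $u$ need not be continuous; the uniformly controlled jump size, however, suffices to run the discrete version of the argument.
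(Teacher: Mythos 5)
Your overall strategy is the same as the paper's: negate complete superlinearity, extract low-divergence witness paths $P_n$ with endpoints on $\gamma$ avoiding a ball of radius comparable to $r_n$ about $\gamma(s_n)$, upgrade them to continuous quasi-geodesics with constants independent of $n$ that still avoid a definite fraction of that ball, and then contradict the Morse property (your closing intermediate-value argument is fine, and indeed more explicit than the paper's). The gap is in the upgrade step, which is exactly where the work lies. You propose to run the ``replace maximal bad subsegments by geodesics'' surgery directly on the arbitrary rectifiable path $P_n$ in a single pass, citing \fullref{prop:morseimpliessublinearcontraction} and \fullref{lem:constructingquasigeods}. But those results only establish the surgery for concatenations of at most \emph{three} geodesics or continuous $(L,0)$--quasi-geodesics; their case analysis (verifying $L'd(p,q)\geqslant |[p,q]|$ for pairs involving points on the newly inserted geodesic) leans on the fact that the untouched pieces are themselves geodesic. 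For a general path, the maximal subsegments on which $L'd(p,q)-|[p,q]_{P_n}|\leqslant 0$ need not be pairwise disjoint, so the one-shot replacement is not even well defined, and after any replacement new bad pairs can appear involving points of the inserted geodesic, forcing iteration. Your displacement bound (``every point of $\alpha_n$ lies within $|P_n|/L'$ of $P_n$'') accounts for only one round; without a bound on the number of rounds, both the final quasi-geodesic constant and the guaranteed distance from $\gamma(s_n)$ are uncontrolled.

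The paper closes exactly this gap by first discretizing $p_n$ into a concatenation of at most $8CL$ geodesic segments whose consecutive endpoints are $r_n/8L$ apart (so the discretized path still avoids the $r_n/8L$--ball about $m_n$), and then applying the three-piece \fullref{lem:constructingquasigeods} in $d=\lceil\log_3 8CL\rceil$ rounds, grouping consecutive pieces in threes; each round at worst halves the avoidance radius and changes the quasi-geodesic constant in a controlled way, yielding an $(L_d,0)$--quasi-geodesic avoiding the ball of radius $r_n/(2^{d+2}L)$ with $d$ and $L_d$ independent of $n$. To repair your argument you would need either this discretize-and-iterate scheme or an independent proof that an arbitrary path of length at most $Kr_n$ can be replaced by a continuous $(L',0)$--quasi-geodesic lying in a controlled neighborhood of it; as written, that claim is asserted rather than proved.
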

\begin{proof} We prove the contrapositive. 
Let $\gamma$ be an $(L,A)$--quasi-geodesic and suppose its divergence is not completely superlinear.
Then there exists $C>0$ for which there exists an unbounded sequence of numbers $r_n\geqslant 1$ and paths $p_n$ such that:
\begin{enumerate}
\item There exists a sequence of real numbers $s_n$ such that the endpoints of $p_n$ are $x_n=\gamma(s_n-r_n)$ and $y_n=\gamma(s_n+r_n)$.
\item $|p_n|\leqslant Cr_n$.
\item $p_n$ does not intersect the $(\frac{r_n}{2L}-A)$--neighborhood of $\gamma(s_n)$.\label{item:outside}
\end{enumerate}

We may assume all $r_n\geqslant 4AL$ so point \eqref{item:outside} can be
replaced by:
\begin{enumerate}
\item[\ref{item:outside}$'$\!.] $p_n$ does not intersect the $(\frac{r_n}{4L})$--neighborhood of $m_n:=\gamma(s_n)$.
\end{enumerate}

Our goal is to construct uniform quasi-geodesics $\gamma_n$ from $x_n$ to $y_n$ that avoid increasingly large balls around $m_n$.

Set $x_{n,0}:=x_n$ and define $x_{n,1}$ to be the last point on $p_n$
for which we have $d(x_{n,0},x_{n,1})=r_n/8L$.

Similarly define $x_{n,i}$ to be $y_n$ if $d(x_{n,i-1},y_n)< r_n/4L$ or to be the last point on $p_n$ satisfying $d(x_{n,i-1},x_{n,i})=r_n/8L$ otherwise.

Note that $y_n=x_{n,k_n}$ for some $k_n\leqslant 8CL$. By construction, if $i\neq j$ then $d(x_{n,i},x_{n,j})\geqslant r_n/8L$.

Let $\gamma_n^1$ be a concatenation of geodesics
$[x_{n,0},x_{n,1}]\dots[x_{n,k_n-1},y_n]$. 
We have that $|\gamma_n^1|\leqslant Cr_n$ and $d(\gamma_n^1,m_n)>r_n/8L$.

Applying \fullref{lem:constructingquasigeods} for each $1\leqslant
i\leqslant\lfloor k_n/3\rfloor$  there are
$(L_2,0)$--quasi-geodesics (where $L_2$ does not depend on $n$) 
from $x_{n,3(i-1)}$ to $x_{n,3i}$ such that the concatenation $\gamma_n^2$ of these with $[x_{n,3\lfloor k_n/3\rfloor},y_n]_{\gamma_n^1}$ satisfies $d(\gamma_n^2,m_n)>r_n/16L$.

Repeating this procedure at most $d=\lceil\log_3 8CL \rceil$ times we
obtain an $(L_d,0)$ quasi-geodesic $\gamma_n^d$ from $x_n$ to $y_n$
satisfying  $d(\gamma_n^d,m_n)>r_n/(2^{d+2}L)$. 
Again, $L_d$ does not depend on $n$.

If $\gamma$ is $\mu$--Morse, then the $\gamma_n^d$ are $\mu'$--Morse for
some $\mu'$ that does not depend on $n$. 
Then $d(\gamma_n^d,m_n)\leqslant \mu'(K,C)$, which is bounded, contradicting the lower bound above.
\end{proof}

A finitely generated group is called \emph{constricted} if all of its
asymptotic cones have cut points \cite{DruSap05}.

\begin{cor}\label{cor:divconecutpts} 
Suppose there exists a quasi-geodesic $\gamma$  with completely super\-linear divergence in a geodesic metric space $X$.
In every asymptotic cone of $X$ every point of the ultralimit of $\gamma$ is a cut point.

In particular, a finitely generated group is constricted if one of
its Cayley graphs contains a quasi-geodesic with completely superlinear divergence.
\end{cor}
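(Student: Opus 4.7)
The plan is to reduce to the Drutu--Mozes--Sapir cut-point characterization of Morse quasi-geodesics recalled in \fullref{intro:Morse}. The work has essentially already been done: by \fullref{prop:csld_implies_sublinear_contraction}, completely superlinear divergence of $\gamma$ implies that for any sufficiently large $\almost$, the projection $\pi_\gamma^\almost$ is $(r,\rho)$--contracting for some sublinear $\rho$. Then \fullref{prop:basiccontractionimpliesmorse} (equivalently, the implication \eqref{item:sublinearcontraction}$\Rightarrow$\eqref{item:Morse} in \fullref{theorem:morse_equiv_contracting}) yields a function $\mu$ such that $\gamma$ is $\mu$--Morse.

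Having obtained that $\gamma$ is a Morse quasi-geodesic, I invoke the characterization of Drutu--Mozes--Sapir \cite{DruMozSap10} quoted in \fullref{intro:Morse}: a quasi-geodesic is Morse if and only if, in every asymptotic cone $\mathcal{C}$ of $X$, every point of the ultralimit $\mathfrak{q}$ of $\gamma$ is a cut-point of $\mathcal{C}$ separating ends of $\mathfrak{q}$. This gives the first statement.

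For the second statement, let $G$ be a finitely generated group whose Cayley graph $X$ contains a quasi-geodesic $\gamma$ with completely superlinear divergence. Every asymptotic cone $\mathcal{C}$ of $G$ coincides (up to the usual bi-Lipschitz identification) with an asymptotic cone of $X$. Since $\gamma$ is unbounded, its ultralimit $\mathfrak{q}$ is non-empty in $\mathcal{C}$, and by the first part every point of $\mathfrak{q}$ is a cut-point of $\mathcal{C}$. Hence every asymptotic cone of $G$ has a cut-point, which is the definition of $G$ being constricted in the sense of \cite{DruSap05}.

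The only potential subtlety is confirming that the notion of ``completely superlinear divergence'' used in this paper matches the hypothesis required by the Drutu--Mozes--Sapir characterization; this is handled cleanly by routing through the Morse property rather than comparing divergence functions directly, so there is no real obstacle here beyond correctly citing the established equivalences.
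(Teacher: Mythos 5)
Your proof is correct and follows exactly the route the paper intends (the corollary is stated there without proof): completely superlinear divergence gives sublinear contraction by \fullref{prop:csld_implies_sublinear_contraction}, hence the Morse property by \fullref{prop:basiccontractionimpliesmorse}, and the Dru{\c{t}}u--Mozes--Sapir cut-point characterization quoted in \fullref{intro:Morse} finishes the first claim. The only point worth tightening is your assertion that the ultralimit of $\gamma$ is non-empty in \emph{every} asymptotic cone of $G$: unboundedness of $\gamma$ alone does not guarantee this for an arbitrary sequence of observation points, and one should invoke homogeneity of the Cayley graph to translate the observation points onto $\gamma$, so that every asymptotic cone of $G$ is isometric to one in which the ultralimit of $\gamma$ is non-empty and hence contains a cut point.
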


Olshanskii, Osin, and Sapir \cite[Corollary~$6.4$]{OlsOsiSap09} build
a group that has an asymptotic cone with no cut point such that the
group has a Cayley graph with geodesics of superlinear divergence. 
These geodesics are therefore not Morse.
They explicitly state that their construction yields geodesics that are not completely superlinear. 
\fullref{cor:divconecutpts} shows that this will be the case in any such construction.

\section{Robustness}\label{sec:robust}
Suppose that $\pi_Y^\almost$ is $(\rho_1,\rho_2)$--contracting.
In this section we investigate the extent to which $\rho_2$ is
affected by changes to $\rho_1$, $\almost$, or $Y$.

Clearly $\pi_Y^\almost$ is $(\rho_1',\rho_2)$--contracting for
$\rho_1'\leqslant \rho_1$.
From \fullref{theorem:morse_equiv_contracting} we know that
$\pi_Y^\almost$ is $(r,\rho_2')$--contracting for some $\rho_2'$
depending on $\rho_1$ and $\rho_2$. 
For this $\rho_2'$, it follows that $\pi_Y^\almost$ is $(\rho_1',\rho_2')$--contracting
for every $\rho_1\leqslant\rho_1'\leqslant r$.

In general $\rho_2$ and $\rho_2'$ are not asymptotic.
For example, if $\pi_Y^\almost$ is $(r/2,B_1)$--contracting it
is $(r,\rho_2)$--contracting for $\rho_2\asymp \log_2$, as in
\fullref{prop:Abel}, but not necessarily $(r,B_2)$--contracting for
some constant $B_2$, by \fullref{ex:log}.
One well-known special case is that $(r/M,B_1)$--contracting
for $M>1$ and $B_1$ bounded implies $(r/2,B_2)$--contracting
for some bounded $B_2$, see, eg, \cite{Sul12}.

The output contraction functions are asymptotic when the input
function is changed by an additive constant:
\begin{lemma}\label{lem:removeconstantinsublinearcontraction}
  If $\pi^\almost_Y$ is $(\rho_1,\rho_2)$--contracting for
  $\rho_1(r)=\rho_1'(r)-C$, with $\rho_1'(r)\leqslant r$ and
  $C\geqslant 0$,  then $\pi_Y^\almost$ is
  $(\rho_1',\rho_2')$--contracting for some
  $\rho_2'\asymp\rho_2$. 
\end{lemma}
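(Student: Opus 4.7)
The plan is to control $\diam \pi_Y^\almost(x) \cup \pi_Y^\almost(x')$ under the enlarged hypothesis $d(x,x') \leq \rho_1'(r)$, where $r := d(x,Y)$, by interpolating a single auxiliary point $x''$ that splits the chain into one piece where the original $(\rho_1,\rho_2)$-contraction applies and one short piece of length at most $C$. Concretely, I would take $x''$ to be the point on a geodesic from $x$ to $x'$ with $d(x,x'') = \max(d(x,x')-C,0)$, so that $d(x,x'') \leq \rho_1(r)$ and $d(x'',x') \leq C$. The hypothesis yields $\diam \pi_Y^\almost(x) \cup \pi_Y^\almost(x'') \leq \rho_2(r)$ immediately.

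To handle the short piece from $x''$ to $x'$, I would fix once and for all a constant $r_0$ with $\rho_1(r_0) \geq C$, which exists because $\rho_1$ is unbounded, and split according to the size of $d(x'',Y)$. If $d(x'',Y) \geq r_0$, then $d(x'',x') \leq C \leq \rho_1(d(x'',Y))$, so the hypothesis applies again and gives $\diam \pi_Y^\almost(x'') \cup \pi_Y^\almost(x') \leq \rho_2(d(x'',Y)) \leq \rho_2(2r)$, where the last inequality uses $d(x'',Y) \leq r + d(x,x'') \leq 2r$. Otherwise $d(x'',Y) < r_0$, and then $d(x',Y) \leq d(x'',Y) + C < r_0 + C$, so from the basic inequality $d(z,\pi_Y^\almost(z)) \leq d(z,Y) + \almost$ together with the triangle inequality one obtains $\diam \pi_Y^\almost(x'') \cup \pi_Y^\almost(x') \leq 2r_0 + 2C + 2\almost$, a constant depending only on $\almost$, $\rho_1$, and $C$.

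Assembling the two estimates via the triangle inequality on projection points, $\diam \pi_Y^\almost(x) \cup \pi_Y^\almost(x')$ is at most $\rho_2(r) + \rho_2(2r) + 2r_0 + 2C + 2\almost$, so setting $\rho_2'(r)$ equal to this quantity yields a non-decreasing, eventually non-negative function with $\rho_2' \asymp \rho_2$ (the $\asympleq$ direction uses $\rho_2(r) \leq \rho_2(2r)$, and the reverse is trivial). Since $\rho_1' = \rho_1 + C \asymp \rho_1$, the ratio $\rho_2'/\rho_1'$ still tends to $0$, so $\rho_2'$ is a legitimate output function and $\pi_Y^\almost$ is $(\rho_1',\rho_2')$-contracting. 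The main subtlety is the case split on $d(x'',Y)$: when the interpolated point drifts too close to $Y$, the contraction hypothesis cannot be reused on the short leg, and choosing the threshold $r_0$ via unboundedness of $\rho_1$ is precisely what keeps the fallback trivial bound from spoiling the asymptotic class of $\rho_2$.
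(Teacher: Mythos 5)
Your proof is correct and follows essentially the same route as the paper's: interpolate one point on a geodesic from $x$ to $x'$ so that the long leg satisfies the original contraction hypothesis and the short leg has length at most $C$, then split on whether the interpolated point is far enough from $Y$ to reuse the hypothesis (your threshold $r_0$ with $\rho_1(r_0)\geqslant C$ plays exactly the role of the paper's constant $C'=\sup\{r\mid\rho_1(r)\leqslant C\}$), falling back to the trivial $O(r_0+C+\almost)$ bound otherwise. The resulting $\rho_2'(r)$ agrees with the paper's $2\rho_2(2r)+2(C+C'+\almost)$ up to constants.
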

\begin{proof}
  Let $C':=\sup\{r\mid \rho_1(r)\leqslant C\}$.
Suppose that $x$ and $y$ are points with $d(x,y)\leqslant
\rho_1'(d(x,Y))$.
If $d(x,y)\leqslant \rho_1(d(x,Y))=\rho_1'(d(x,Y))-C$ then we have
$\diam\pi^\almost_Y(x)\cup\pi^\almost_Y(y)\leqslant \rho_2(d(x,Y))$.
Otherwise, let $z$ be a point on a geodesic from $x$ to $y$ such that
$d(x,z)=\rho_1(d(x,Y))$.
This implies $d(y,z)\leqslant C$.
Now: 
\begin{align*}
  \diam\pi^\almost_Y(x)\cup\pi^\almost_Y(y)&\leqslant\diam\pi^\almost_Y(x)\cup\pi^\almost_Y(z)+\diam\pi^\almost_Y(z)\cup\pi^\almost_Y(y)\\
&\leqslant \rho_2(d(x,Y))+\diam\pi^\almost_Y(z)\cup\pi^\almost_Y(y)
\end{align*}

If $d(z,y)>\rho_1(d(z,Y))$ then $d(z,Y)\leqslant C'$, so
$\diam\pi^\almost_Y(z)\cup\pi^\almost_Y(y)\leqslant 2(C+C'+\almost)$.
If $d(z,y)\leqslant\rho_1(d(z,Y))$ then
$\diam\pi^\almost_Y(z)\cup\pi^\almost_Y(y)\leqslant\rho_2(d(z,Y))\leqslant\rho_2(2d(x,Y))$.
Combining these cases, we see that $d(x,y)\leqslant \rho_1(d(x,Y))$ implies:
\[\diam\pi^\almost_Y(x)\cup\pi^\almost_Y(y)\leqslant\rho_2(d(x,Y))+\rho_2(2d(x,Y))+2(C+C'+\almost)\]
Thus, it suffices to take $\rho_2'(r):=2\rho_2(2r)+2(C+C'+\almost)$.
\end{proof}

Next, consider changes to the projection parameter.
\begin{lemma}\label{lem:almost}
Suppose $\almost_0$ and $\almost_1$ are constants such that the
empty set is neither in the image of $\pi_Y^{\almost_0}\from X\to
2^Y$ nor in the image of $\pi_Y^{\almost_1}\from X\to 2^Y$.
If $\pi_Y^{\almost_0}$ is $(\rho_1,\rho_2)$--contracting then there
exist $\rho_1'$ and $\rho_2'$ such that $\pi_Y^{\almost_1}$ is
$(\rho_1',\rho_2')$--contracting.
If $\almost_1\leqslant\almost_0$ or if $\rho_1(r):=r$ then we can take $\rho_1'=\rho_1$ and $\rho_2'\asymp\rho_2$.
\end{lemma}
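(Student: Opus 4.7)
The plan is to handle the three assertions in turn. The first, for $\almost_1\leqslant\almost_0$, is essentially trivial: the definition of $\pi^\almost_Y$ gives the inclusion $\pi_Y^{\almost_1}(x)\subseteq\pi_Y^{\almost_0}(x)$ at every $x$, so the contraction data transfers verbatim and I may take $\rho_1':=\rho_1$, $\rho_2':=\rho_2$.

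For the general existence claim (with no hypothesis relating $\almost_0$ and $\almost_1$ and with arbitrary $\rho_1$), I would invoke machinery already proved. By \fullref{prop:basiccontractionimpliesmorse}, the subspace $Y$ is $\mu$--Morse for some $\mu$ depending only on $\almost_0,\rho_1,\rho_2$. The standing assumption that $\pi_Y^{\almost_1}$ omits the empty set from its image then lets me apply \fullref{prop:morseimpliessublinearcontraction} to produce $\rho_2'$ with $\pi_Y^{\almost_1}$ being $(r,\rho_2')$--contracting.

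The substantive part of the lemma is the third claim: when $\rho_1(r)=r$, one can take $\rho_2'\asymp\rho_2$. Here I would directly compare $\pi_Y^{\almost_1}(x)$ to $\pi_Y^{\almost_0}(x)$ at each point. Fix $p_1\in\pi_Y^{\almost_1}(x)$; since $d(x,p_1)\geqslant d(x,Y)$ I can choose $x'$ on a geodesic $[x,p_1]$ with $d(x,x')=d(x,Y)$, whereupon $d(x',p_1)\leqslant\almost_1$ and $d(x',Y)\leqslant\almost_1$. The choice $d(x,x')=\rho_1(d(x,Y))$ makes the contraction condition available for the pair $(x,x')$, so for any $p_0\in\pi_Y^{\almost_0}(x)$ and any $q\in\pi_Y^{\almost_0}(x')$,
\[
d(p_1,p_0)\leqslant d(p_1,x')+d(x',q)+d(q,p_0)\leqslant 2\almost_1+\almost_0+\rho_2(d(x,Y)).
\]
Thus $\pi_Y^{\almost_1}(x)$ lies in the $(2\almost_1+\almost_0+\rho_2(d(x,Y)))$--neighborhood of $\pi_Y^{\almost_0}(x)$.

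To finish, given any $x,x'$ with $d(x,x')\leqslant d(x,Y)=\rho_1(d(x,Y))$ (so $d(x',Y)\leqslant 2d(x,Y)$), I apply the previous inequality at both $x$ and $x'$ and combine by the triangle inequality with the contraction bound $\diam\pi_Y^{\almost_0}(x)\cup\pi_Y^{\almost_0}(x')\leqslant\rho_2(d(x,Y))$ to obtain
\[
\diam\pi_Y^{\almost_1}(x)\cup\pi_Y^{\almost_1}(x')\leqslant 4\almost_1+2\almost_0+3\rho_2(2d(x,Y)),
\]
so $\rho_2'(r):=3\rho_2(2r)+4\almost_1+2\almost_0\asymp\rho_2$ works, and it is non-decreasing, eventually non-negative, and sublinear because $\rho_2$ is. The main irritation, though not a genuine obstacle, is the degenerate regime $d(x,Y)$ small where the auxiliary point $x'$ collapses onto $x$: in that range the diameter of $\pi_Y^{\almost_1}(x)$ is already bounded by $2\almost_1$ and is swallowed by the additive constant in $\rho_2'$.
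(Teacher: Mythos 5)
Your proposal is correct and follows essentially the same route as the paper: the inclusion $\pi_Y^{\almost_1}\subseteq\pi_Y^{\almost_0}$ for $\almost_1\leqslant\almost_0$, the appeal to the Morse equivalence for bare existence, and, for $\rho_1(r)=r$, the comparison of $\pi_Y^{\almost_1}(x)$ with $\pi_Y^{\almost_0}(x)$ by pulling back along a geodesic to an auxiliary point at distance exactly $d(x,Y)$ from $x$ and applying the contraction condition there. The paper's version introduces one such auxiliary point for each of the two projections rather than only for $\pi_Y^{\almost_1}$, but this is a cosmetic difference and both yield $\rho_2'\asymp\rho_2$.
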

\begin{proof}
When $\almost_1\leqslant\almost_0$ we have
$\pi_Y^{\almost_1}(x)\subset\pi_Y^{\almost_0}(x)$, so the result is clear.
 In this case $\rho'_1=\rho_1$ and $\rho_2'=\rho_2$ will suffice.

The fact that $\pi_Y^{\almost_1}$ is sublinearly contracting follows
from \fullref{theorem:morse_equiv_contracting}, since $Y$ is Morse.
It remains only to prove the asymptotic statement in the case that
$\rho_1(r):=r$, so suppose $\pi_Y^{\almost_0}$ is $(r,\rho_2)$--contracting.

For any $x\in X\setminus Y$ and each $i\in\{0,1\}$,
consider a point $x_i\in\pi_Y^{\almost_i}(x)$ and a point $z_i$ on a
geodesic from $x$ to $x_i$ with $d(x,z_i)=d(x,Y)$. 
Then:
\begin{align*}
  d(x_0,x_1)&\leqslant d(x_0,z_0)+d(z_0,\pi^{\almost_0}_Y(z_0))+\diam
              \pi_Y^{\almost_0}(z_0)\cup \pi_Y^{\almost_0}(x)\\
&\qquad+\diam
              \pi_Y^{\almost_0}(x)\cup\pi_Y^{\almost_0}(z_1)+d(\pi_Y^{\almost_0}(z_1),z_1)+d(z_1,x_1)\\
&\leqslant
  \almost_0+2\almost_0+\rho_2(d(x,Y))+\rho_2(d(x,Y))+\almost_0+\almost_1+\almost_1\\
&=4\almost_0+2\almost_1+2\rho_2(d(x,Y))
\end{align*}

If $d(x,y)\leqslant d(x,Y)$ then:
\begin{align*}
\diam \pi_Y^{\almost_1}(x)\cup\pi_Y^{\almost_1}(y)&\leqslant \diam\pi_Y^{\almost_1}(x)\cup\pi_Y^{\almost_0}(x)+\diam 
\pi_Y^{\almost_0}(x)\cup\pi_Y^{\almost_0}(y)\\
 &\qquad+\diam \pi_Y^{\almost_0}(y)\cup\pi_Y^{\almost_1}(y)\\
&\leqslant  4\almost_0+2\almost_1+2\rho_2(d(x,Y))      +
  \rho_2(d(x,Y))\\
&\qquad +4\almost_0+2\almost_1+2\rho_2(d(y,Y))
\end{align*}

Since $d(y,Y)\leqslant 2d(x,Y)$, this means that
 $\pi_Y^{\almost_1}$ is $(r,\rho_2')$--contracting for:
 \[\rho_2'(r):=8\almost_0+4\almost_1+3\rho_2(r)+2\rho_2(2r)\asymp \rho_2(r)\qedhere\]
\end{proof}

Finally, consider changes to the target of the projection map.
\begin{lem}\label{lem:strcontbddHdist}
Let $Y$ and $Y'$ be subspaces of a geodesic metric space $X$ at bounded Hausdorff distance from
one another. 
Suppose that $\pi_Y^\almost$ is $(\rho_1,\rho_2)$--contracting.
Then $\pi_{Y'}^{\almost}$ is $(r,\rho_2')$--contracting for some $\rho_2'$.
If $\rho_1(r)=r$ then we can take $\rho_2'\asymp\rho_2$.
\end{lem}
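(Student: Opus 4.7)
The plan is to prove the first assertion by passing through the Morse property, and the second, more quantitative, assertion by a direct projection comparison. Let $D$ denote the Hausdorff distance between $Y$ and $Y'$.

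For the first assertion, \fullref{prop:basiccontractionimpliesmorse} gives that $Y$ is $\mu$-Morse for some $\mu$. The Morse property transfers to $Y'$ up to controlled error: given any $(L,A)$-quasi-geodesic $\gamma$ with endpoints on $Y'$, attach geodesic segments of length at most $D$ to each endpoint to obtain an $(L,A+2D)$-quasi-geodesic with endpoints on $Y$. This extension lies in the $\mu(L,A+2D)$-neighborhood of $Y$, whence $\gamma$ lies in the $(\mu(L,A+2D)+D)$-neighborhood of $Y'$. So $Y'$ is $\mu'$-Morse with $\mu'(L,A) := \mu(L,A+2D)+D$, and \fullref{prop:morseimpliessublinearcontraction} then yields $\rho_2'$ such that $\pi_{Y'}^\almost$ is $(r,\rho_2')$-contracting.

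For the quantitative statement, assume $\rho_1(r)=r$. The key observation is that nearby target sets have nearly identical closest-point projections, at the cost of enlarging the tolerance $\almost$. Specifically, if $p' \in \pi_{Y'}^\almost(x)$ and $p \in Y$ satisfies $d(p,p') \leqslant D$, then using $d(x,Y') \leqslant d(x,Y) + D$ we obtain
\[
  d(x,p) \leqslant d(x,p') + D \leqslant d(x,Y') + \almost + D \leqslant d(x,Y) + \almost + 2D,
\]
so $p \in \pi_Y^{\almost+2D}(x)$. Hence for all $x,y \in X$,
\[
  \diam \pi_{Y'}^\almost(x) \cup \pi_{Y'}^\almost(y) \leqslant 2D + \diam \pi_Y^{\almost+2D}(x) \cup \pi_Y^{\almost+2D}(y).
\]
By \fullref{lem:almost}, $\pi_Y^{\almost+2D}$ is $(r,\tilde\rho_2)$-contracting with $\tilde\rho_2 \asymp \rho_2$. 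For $x,y$ with $d(x,y) \leqslant d(x,Y')-D$, we have $d(x,y) \leqslant d(x,Y)$, so the contraction bound applies and gives
\[
  \diam \pi_{Y'}^\almost(x) \cup \pi_{Y'}^\almost(y) \leqslant 2D + \tilde\rho_2(d(x,Y)) \leqslant 2D + \tilde\rho_2(d(x,Y')+D).
\]
This exhibits $\pi_{Y'}^\almost$ as $(r-D, \rho_2'')$-contracting with $\rho_2''(r) := 2D + \tilde\rho_2(r+D) \asymp \rho_2$, and \fullref{lem:removeconstantinsublinearcontraction} then upgrades the input function from $r-D$ to $r$ at the cost of an asymptotically equivalent output.

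The main obstacle I anticipate is bookkeeping: each step introduces a small adjustment to $\almost$, to the input function, or to the argument of $\rho_2$, and one must check that each adjustment preserves the asymptotic class; the chain $\rho_2 \asymp \tilde\rho_2 \asymp \rho_2'' \asymp \rho_2'$ must remain intact. A minor secondary subtlety is ensuring that $\pi_{Y'}^\almost$ has non-empty image so that the conclusion formally matches the definition of contraction; this is automatic when $\almost>0$ and otherwise requires a separate argument.
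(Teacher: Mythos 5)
Your proof is correct and takes essentially the same route as the paper's: the key inclusion $\pi_{Y'}^{\almost}(x)\subset\overline{N}_D\bigl(\pi_Y^{\almost+2D}(x)\bigr)$ followed by an appeal to \fullref{lem:almost} (with \fullref{lem:removeconstantinsublinearcontraction} to absorb the additive loss in the input function). The paper's proof is just this inclusion plus ``the result now follows easily''; you have supplied the bookkeeping it omits.
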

\begin{proof}
Let $C$ be the Hausdorff distance between $Y$ and $Y'$.

For every $x\in X$ we have $\pi^{\almost}_{Y'}(x)\subset
\overline{N}_C(\pi^{\almost+2C}_Y(x))$.
The result now follows easily from \fullref{lem:almost}.
\end{proof}

In light of \fullref{lem:almost}, we can speak of the set $Y$ being a
contracting set if some $\almost$--closest point projection to $Y$ is contracting.

\begin{definition}\label{def:spacecontracting}
  We say \emph{$Y$ is $(\rho_1,\rho_2)$--contracting} if there exists
  an $\almost\geqslant 0$ such that the $\almost$--closest point projection
  $\pi_Y^\almost\from X\to 2^Y$ is $(\rho_1,\rho_2)$--contracting.

Equivalently, \emph{$Y$ is $(\rho_1,\rho_2)$--contracting} if for all sufficiently small $\almost\geqslant 0$, if
$\pi_Y^\almost$ does not have the empty set in its image, then $\pi_Y^\almost$ is
$(\rho_1,\rho_2)$--contracting. 
\end{definition}

\section{Geodesic image theorem}
In this section we give an additional characterization of sublinear
contraction in terms of projections of geodesic segments.
\begin{theorem}\label{thm:git}
Let $Y$ be a subspace of a geodesic metric space $X$.
Suppose the empty set is not in the image of $\pi_Y^\almost$.
  The following are equivalent:
  \begin{enumerate}
  \item There exist a sublinear function $\rho$ and a constant
    $C\geqslant 0$
    such that for every
geodesic segment $\gamma\subset X$, with endpoints denoted $x$ and $y$, if $d(\gamma,Y)\geqslant
 C$ then
 $\diam\pi_Y^\almost(\gamma)\leqslant\rho(\max\{d(x,Y),d(y,Y)\})$.\label{item:endpointbound}
 \item There exist a sublinear function $\rho'$ and a constant
    $C'\geqslant 0$
    such that for every
geodesic segment $\gamma\subset X$, if $d(\gamma,Y)\geqslant
 C'$ then $\diam\pi_Y^\almost(\gamma)\leqslant\rho'(\max_{z\in\gamma}d(z,Y))$.\label{item:maxbound}
\item There exists a sublinear function $\rho''$ such that $\pi_Y^\almost$ is 
  $(r,\rho'')$--contracting. \label{item:sublinear_contraction}  
\end{enumerate}
Moreover, $\rho\asymp\rho'\asymp\rho''$.
\end{theorem}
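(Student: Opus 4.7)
I will prove the equivalences via the cycle $(1)\Rightarrow(2)\Rightarrow(3)\Rightarrow(1)$ and then read off the asymptotic equivalences $\rho\asymp\rho'\asymp\rho''$ by tracking constants through each step. The first implication is immediate: since $x,y\in\gamma$, the inequality $\max\{d(x,Y),d(y,Y)\}\leqslant\max_{z\in\gamma}d(z,Y)$ combined with monotonicity of $\rho$ lets me take $\rho':=\rho$ and $C':=C$ in $(2)$.

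For $(2)\Rightarrow(3)$, take $x,y\in X$ with $d(x,y)\leqslant d(x,Y)$ and fix a geodesic $[x,y]$. Every $z\in[x,y]$ satisfies $d(z,Y)\leqslant 2d(x,Y)$, so the maximum in $(2)$ is at most $2d(x,Y)$. If $d([x,y],Y)\geqslant C'$, the hypothesis $(2)$ applies directly and yields $\diam\pi_Y^\almost(x)\cup\pi_Y^\almost(y)\leqslant\rho'(2d(x,Y))$. Otherwise, split $[x,y]$ at the first and last incursions into $\overline{N}_{C'}(Y)$: the outer subsegments fall under $(2)$, while on the inner subsegment both endpoints lie within $C'+\almost$ of their projections, so the triangle inequality gives a uniformly bounded contribution. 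Summing yields a sublinear contraction function $\rho''(r)\asymp\rho'(2r)$.

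For $(3)\Rightarrow(1)$, I invoke \fullref{theorem:morse_equiv_contracting} to produce a Morse function $\mu$ for $Y$ depending only on $\rho''$ and $\almost$. Given a geodesic $\gamma=[x,y]$ with $d(\gamma,Y)\geqslant C$ and $R:=\max\{d(x,Y),d(y,Y)\}$, fix projections $p_x,p_y$ and form the concatenation $P:=[p_x,x]\cdot\gamma\cdot[y,p_y]$. For a parameter $L\geqslant 2$, applying the corner-cutting procedure from the proof of \fullref{prop:morseimpliessublinearcontraction} to $P$ yields a continuous $(L,0)$--quasi-geodesic $\gamma'$ from $p_x$ to $p_y$. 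Choosing $C>\mu(L,0)$, the Morse containment $\gamma'\subset\overline{N}_{\mu(L,0)}(Y)$, together with disjointness of $\gamma$ from $\overline{N}_{\mu(L,0)}(Y)$, rules out Cases~2 and~3 of the corner-cutting, which would leave portions of $\gamma$ on $\gamma'$. Therefore one is in Case~1: a single geodesic shortcut $[x',y']$ with $x'\in[p_x,x]$ and $y'\in[y,p_y]$ replaces all of $\gamma$, and satisfies $d(x',y')\leqslant|P|/L$. Sublinear contraction between the pairs $(x,x')$ and $(y,y')$, combined with $x',y'\in\overline{N}_{\mu(L,0)}(Y)$, yields an estimate of the form $d(p_x,p_y)\leqslant 2\rho''(R)+2\mu(L,0)+(|\gamma|+cR)/L+\text{const}$; substituting the a priori bound $|\gamma|\leqslant d(p_x,p_y)+2R+2\almost$ and solving for $d(p_x,p_y)$ (with $L$ chosen large enough to absorb the self-referential term) produces a sublinear bound $\rho(R)$, and the identical argument applied to subsegments of $\gamma$ bounds $\diam\pi_Y^\almost(\gamma)$.

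The asymptotic equivalences $\rho\asymp\rho'\asymp\rho''$ then follow by inspection of the quantitative bounds at each stage. The main obstacle is in $(3)\Rightarrow(1)$: the corner-cutting introduces a tradeoff between the Morse parameter $\mu(L,0)$, which grows with $L$, and the shortcut contribution $|\gamma|/L$, which shrinks with $L$, so calibrating $L$ (possibly as a function of $R$) to keep both terms sublinear in $R$ requires exploiting the quantitative control of $\mu$ furnished by \fullref{theorem:morse_equiv_contracting}.
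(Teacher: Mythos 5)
Your implications (1)$\Rightarrow$(2) and (2)$\Rightarrow$(3) are fine. The second is essentially the paper's argument in different packaging: the paper takes $\rho_1(r):=r-C$ so the geodesic automatically avoids the $C$--neighborhood and then upgrades to $\rho_1(r)=r$ via \fullref{lem:removeconstantinsublinearcontraction}, whereas you split off the middle portion by hand. Your ``uniformly bounded contribution'' from the inner subsegment is correct, but only because $d(x,y)\leqslant d(x,Y)$ forces the first and last incursion points into $\overline{N}_{C'}(Y)$ to be within $C'$ of each other; this deserves to be said explicitly.

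The genuine gap is in (3)$\Rightarrow$(1), and the calibration issue you flag as ``the main obstacle'' is not a detail to be worked out --- it defeats this route. Two constraints pull in opposite directions. First, the corner-cutting of \fullref{prop:morseimpliessublinearcontraction} only produces an $(L,0)$--quasi-geodesic when $L\geqslant |P|/d(p_x,p_y)$, and in any case with $L$ fixed your final bound has the form $\mu(L,0)+O(R)/L$, which is linear in $R$, not sublinear; so $L$ must grow with $R$ (indeed it must be large precisely when $d(p_x,p_y)$ is small, which is the regime you are trying to establish). Second, excluding Cases 0, 2, and 3 --- i.e., guaranteeing that no point of $\gamma$ survives into $\gamma'$ --- requires $d(\gamma,Y)>\mu(L,0)$. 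Once $L=L(R)\to\infty$, the quantity $\mu(L(R),0)$ is unbounded, and no fixed threshold $C$ in statement (1) can guarantee this: a geodesic with $d(\gamma,Y)=C$ and $R$ huge lands you in a case your argument does not handle. (A secondary point: running the argument on subsegments of $\gamma$ bounds $\diam\pi_Y^\almost(\gamma)$ in terms of $\max_{z\in\gamma}d(z,Y)$, which is statement (2); to recover (1) you must also show the interior of $\gamma$ stays within $O(R)$ of $Y$, which does follow from the endpoint bound together with $|\gamma|\leqslant d(p_x,p_y)+2R+2\almost$, but is missing.) The paper's proof of this direction uses neither the Morse property nor corner-cutting: it subdivides $\gamma$ at points $z_0,z_1,\dots$ with $d(z_i,z_{i+1})=d(z_i,Y)$, applies the contraction hypothesis to consecutive points, and compares the exact geodesic length $d(x,y)=d(x,Y)+\sum_i d(z_i,Y)+d(z_k,y)$ with the upper bound obtained by detouring through the projections; taking $C:=R_2$, where $\rho''(r)\leqslant r/2$ for $r\geqslant R_2$, makes each $d(z_i,Y)-\rho''(d(z_i,Y))$ dominate $\rho''(d(z_i,Y))$ and closes the bootstrap with $\rho\asymp\rho''$. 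I recommend replacing your (3)$\Rightarrow$(1) with an argument of this type.
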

See \fullref{fig:log}, letting $\gamma$ be a
subsegment of $\cup_i W_i$.

The case that $Y$ is strongly contracting, that is, 
$\rho''$ is bounded, recovers the well-known `Bounded Geodesic Image
Property', cf \cite{MasMin00, BesFuj09}.
\begin{corollary}\label{corollary:bgi}
  If $Y$ is strongly contracting, $R_2\geqslant 1$ is a constant greater than
  twice the bound on the contraction function for $Y$, and $\gamma$ is a geodesic segment
  that does not enter the $R_2$--neighborhood of $Y$ then
  $\diam\pi_Y^\almost(\gamma)$ is bounded, with bound depending only
  on $\almost$ and $\rho''$.
\end{corollary}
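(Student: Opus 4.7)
The plan is to deduce the corollary immediately from the implication \eqref{item:sublinear_contraction}~$\Rightarrow$~\eqref{item:maxbound} of \fullref{thm:git}. By definition of strong contraction, $\pi_Y^\almost$ is $(r,B)$--contracting for some constant $B$; equivalently, condition \eqref{item:sublinear_contraction} of \fullref{thm:git} holds with $\rho''$ equal to the constant function $B$. Applying the theorem, there exist a sublinear function $\rho'$ and a constant $C'$ such that for every geodesic segment $\gamma$ with $d(\gamma,Y)\geqslant C'$,
\[
\diam \pi_Y^\almost(\gamma)\leqslant\rho'\bigl(\max_{z\in\gamma}d(z,Y)\bigr).
\]
Because $\rho'\asymp\rho''\equiv B$ in \fullref{thm:git}, the function $\rho'$ is bounded by some constant $B'$ that depends only on $\almost$ and $\rho''$.

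Next I would confirm that the threshold $C'$ produced by the proof of \fullref{thm:git} is at most the hypothesized $R_2>2B$. In the constant-$\rho''$ case, the proof of \eqref{item:sublinear_contraction}~$\Rightarrow$~\eqref{item:maxbound} collapses substantially, and inspection should show that the only scale one needs to keep $\gamma$ away from is controlled by the contraction bound itself, which $R_2>2B$ safely exceeds. If the statement of \fullref{thm:git} is not sharp enough to deliver $C'\leqslant R_2$ directly, I would fall back on a short direct chaining argument: pick consecutive points $z_0,\dots,z_k$ along $\gamma$ with $d(z_i,z_{i+1})\leqslant d(z_i,Y)$, apply $(r,B)$--contraction to each consecutive pair, and use the lower bound $d(\gamma,Y)>2B$ together with the coarse-Lipschitz nature of $d(\cdot,Y)$ along $\gamma$ to ensure that the endpoint projections cannot drift beyond a controlled amount dictated by $B$ and $\almost$.

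With $R_2\geqslant C'$ established, the hypothesis $d(\gamma,Y)\geqslant R_2$ of the corollary implies $d(\gamma,Y)\geqslant C'$, so part~\eqref{item:maxbound} of \fullref{thm:git} yields $\diam \pi_Y^\almost(\gamma)\leqslant B'$, a bound depending only on $\almost$ and $\rho''$, as required.

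The main obstacle is the quantitative matching between the abstract threshold $C'$ supplied by \fullref{thm:git} and the concrete hypothesis $R_2>2B$: verifying that the proof specializes, in the strongly contracting case, to a threshold bounded by twice the contraction constant. I expect this to be routine bookkeeping in the constant case, rather than a substantive difficulty, since the sublinearity machinery is not invoked when $\rho''$ is already bounded.
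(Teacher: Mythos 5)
Your proposal is correct and is essentially the paper's own (implicit) argument: the corollary is stated as an immediate specialization of \fullref{thm:git}, whose proof takes $C':=R_2$ where $R_2$ is precisely the threshold beyond which $\rho''(r)\leqslant r/2$, i.e.\ twice the bound on $\rho''$ in the strongly contracting case, and yields the bound $\rho'(r)=4\almost+12\rho''(r)$, which is constant when $\rho''$ is. The quantitative matching you flag as the main obstacle is thus already built into the statement of the corollary by design.
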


Alternatively, one could read \fullref{thm:git} as saying that if
$\pi_Y^\almost$ is sublinearly contracting and $\gamma$ is a geodesic
ray that is far from $Y$, but such that $\pi_Y^\almost(\gamma)$ is
large, then $d(\gamma(t),Y)$ grows superlinearly with respect to
$\diam \pi_Y^\almost(\gamma([0,t]))$. 

\begin{proof}[{Proof of \fullref{thm:git}}]\mbox{}\\
{\itshape\eqref{item:endpointbound}$\implies$\eqref{item:sublinear_contraction}:}
 Define $\rho_1(r):=r-C$ and $\rho_2(r)=\rho(2r-C)$.
By \fullref{lem:removeconstantinsublinearcontraction}, it suffices to
show that $\pi_Y^\almost$ is $(\rho_1,\rho_2)$--contracting.

Suppose $x$ and $y$ are points of $X$ with $d(x,y)\leqslant
\rho_1(d(x,Y))$, and let $\gamma$ be a geodesic from $x$ to $y$.
Then $\gamma$ remains outside the $C$--neighborhood of $Y$, by the
definition of $\rho_1$, so:
\begin{align*}
  \diam\pi_Y^\almost(x)\cup\pi_Y^\almost(y)&\leqslant\diam\pi_Y^\almost(\gamma)\\
&\leqslant\rho(\max\{d(x,Y),d(y,Y)\})\\
&\leqslant \rho(2d(x,Y)-C)=\rho_2(d(x,Y))
\end{align*}
This proves
{\itshape\eqref{item:endpointbound}$\implies$\eqref{item:sublinear_contraction}},
and a similar argument proves {\itshape\eqref{item:maxbound}$\implies$\eqref{item:sublinear_contraction}}.

Now assume {\itshape \eqref{item:sublinear_contraction}}.
If $d(x,y)\leqslant d(x,Y)+d(y,Y)$ then both
{\itshape\eqref{item:endpointbound}} and
{\itshape\eqref{item:maxbound}} follow easily, so assume not.
Let $z_0$ be the point of $\gamma$ at distance $d(x,Y)$ from $x$.
Our assumption says $d(z_0,y)>d(y,Y)$.
Define points $z_{i+1}$ inductively as follows: if $d(z_i,y)>d(y,Y)+d(z_i,Y)$
define $z_{i+1}$ to be the point of $\gamma$ between $z_i$ and $y$ at
distance $d(z_i,Y)$ from $z_i$.
Let $k$ be the last index so defined.
From these choices we estimate:
\begin{align}
  \diam \pi_Y^\almost(\gamma)&\leqslant \diam
  \pi_Y^\almost(\overline{N}_{d(x,Y)}(x))+\sum_{i=0}^{k}\diam
                               \pi_Y^\almost(\overline{N}_{d(z_i,Y)}(z_i))\notag\\
&\qquad\qquad +\diam\pi_Y^\almost(\overline{N}_{d(y,Y)}(y))\notag\\
&\leqslant 2\left(\rho''(d(x,Y))+\sum_{i=0}^{k}\rho''(d(z_i,Y)) +\rho''(d(y,Y))\right)\label{projectionestimate}
\end{align}

Since $\gamma$ is a geodesic:
\begin{align}
  d(x,y)&=d(x,z_0)+\sum_{i=0}^{k-1}d(z_i,z_{i+1}) + d(z_k,y)\notag\\
&=d(x,Y)+\sum_{i=0}^{k-1}d(z_i,Y) +d(z_k,y)\label{subsegments}
\end{align}

We can also bound $d(x,y)$ in terms of the projections to $Y$:
\begin{align}
d(x,y)&\leqslant d(x,\pi_Y^\almost(x))+\diam\pi_Y^\almost(x)\cup\pi_Y^\almost(y)+d(\pi_Y^\almost(y),y)\notag\\
  &\leqslant d(x,\pi_Y^\almost(x))+\diam\pi_Y^\almost(x)\cup\pi_Y^\almost(z_0)+\sum_{i=0}^{k-1}\diam\pi_Y^\almost(z_i)\cup\pi_Y^\almost(z_{i+1})\notag\\
&\qquad\qquad+\diam\pi_Y^\almost(z_k)\cup\pi_Y^\almost(y)+d(\pi_Y^\almost(y),y)\notag\\
&\leqslant d(x,Y)+\almost+\rho''(d(x,Y))+\sum_{i=0}^{k-1}\rho''(d(z_i,Y))\label{detour}\\
&\qquad\qquad+\rho''(d(z_k,Y))+\rho''(d(y,Y))+d(y,Y)+\almost\notag
\end{align}

Combining \eqref{subsegments} and \eqref{detour} gives us the
estimate:
\begin{multline}
  \label{eq:11}
  \sum_{i=0}^{k-1}d(z_i,Y)-\rho''(d(z_i,Y))\leqslant\\ 2\almost+\rho''(d(x,Y))+\rho''(d(z_k,Y))+\rho''(d(y,Y))+d(y,Y)-d(z_k,y)
\end{multline}

Define $R_n\geqslant 0$ such that for all $r\geqslant R_n$ we have $0\leqslant
\rho''(r)\leqslant r/n$. 
Suppose that $d(\gamma,Y)\geqslant R_2$ so that
$d(z_i,Y)-\rho''(d(z_i,Y))\geqslant \rho''(d(z_i,Y))$ for all $i$.
These bounds, along with \eqref{eq:11}, \eqref{projectionestimate}, and $E:=d(z_k,y)-d(y,Y)$
give:
\[\diam \pi_Y^\almost(\gamma)\leqslant 2\left(2\big(\almost+\rho''(d(x,Y))+ \rho''(d(z_k,Y))+\rho''(d(y,Y))\big)-E\right)
\]
By construction, $E>0$, so to prove {\itshape
  \eqref{item:maxbound}} it suffices to take $C':=R_2$ and
$\rho(r):=4\almost +12\rho''(r)$.

To prove {\itshape
  \eqref{item:endpointbound}} we suppose  $d(\gamma,Y)\geqslant C:=R_4\geqslant R_2$ and bound $2
\rho''(d(z_k,Y))-E$ in terms of $\rho''(d(y,Y))$.
There are two cases to consider.
If $d(z_k,Y)\leqslant 4d(y,Y)$ then $2\rho''(d(z_k,Y))-E\leqslant
2\rho''(4d(y,Y))$.
Otherwise, $d(z_k,Y)>4d(y,Y)$ implies $E>d(z_k,Y)/2$, so:
\[2\rho''(d(z_k,Y))-E<2\frac{d(z_k,Y)}{4}-\frac{d(z_k,Y)}{2}=0\]
Thus, it suffices to take $\rho'(r):=4\almost+12\rho''(4r)$.
\end{proof}

\section{Further applications}
First, we prove a general result. 
\begin{proposition}\label{lemma:quasiconvex}
Let $X$ be a geodesic metric space.
Suppose subspaces $Y$ and $Y'$ of $X$ are $\mu$--Morse. 
Let $\almost\geqslant 0$ be a constant such that there exist points $p\in Y$ and
$p'\in Y'$ such that $d(p,p')\leqslant d(Y,Y')+\almost$.
Then there exist a constant $B$ and a sublinear function $\rho$, each
depending only on $\mu$ and $\almost$, satisfying the following conditions:
\begin{itemize}
\item If $d(Y,Y')\leqslant 2\mu(4,0)$ then $Y\cup Y'$ is $B$--quasi-convex.
\item If $d(Y,Y')>2\mu(4,0)$ then for every geodesic $\alpha$ from $Y$ to $Y'$ with
  $|\alpha|\leqslant d(Y,Y')+\almost$ and every geodesic $\gamma$ from
  $Y$ to $Y'$ we have $d(\alpha,\gamma)<\rho(d(Y,Y'))$.
\end{itemize}
\end{proposition}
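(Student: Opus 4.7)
My plan is to invoke the Morse/sublinear-contraction equivalence of \fullref{theorem:morse_equiv_contracting} and the geodesic image theorem \fullref{thm:git}. Let $\rho^*$ be a common sublinear contraction function for $Y$ and $Y'$, depending only on $\mu$; both subspaces are also $\mu(1,0)$-quasi-convex, since the geodesic connecting two points of a Morse subspace is itself a $(1,0)$-quasi-geodesic with endpoints on the subspace.

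For Case 1, where $d(Y,Y') \leqslant 2\mu(4,0)$, I would establish quasi-convexity of $Y \cup Y'$ by showing that the enlarged subspace $\widetilde Y := Y \cup [p,p'] \cup Y'$ is itself $\mu'$-Morse for some $\mu'$ depending only on $\mu$ and $\almost$; then $Y \cup Y'$, being at Hausdorff distance at most $\mu(4,0) + \almost/2$ from the Morse (hence quasi-convex) set $\widetilde Y$, is $B$-quasi-convex for a corresponding $B$. To prove $\widetilde Y$ is Morse, let $\sigma$ be an $(L,A)$-quasi-geodesic with endpoints in $\widetilde Y$. Endpoints in the same piece follow from Morseness of that piece together with the short length of the bridge; for mixed endpoints $x \in Y$ and $x' \in Y'$, I would extend $\sigma$ at both ends---by appending $[x',p']\cdot[p',p]$ (yielding a path with both endpoints in $Y$) and analogously at the $x$-end (yielding one with both endpoints in $Y'$)---and apply Morseness of $Y$ and $Y'$, respectively, to the extensions. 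The principal obstacle is that either single extension has an added kink whose size depends on $d(x',p')$ or $d(x,p)$, which is a priori unbounded; combining the two bounds simultaneously is essential, exploiting $d(p,p')\leqslant 2\mu(4,0)+\almost$ to eliminate the unbounded dependence and deliver uniform $\mu'$.

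For Case 2, where $d(Y,Y') > 2\mu(4,0)$, I would compare $\gamma$ with $\alpha$ through projections. Both $\gamma$ and $\alpha$ contain a crossing subsegment lying at distance at least $d(Y,Y')/4$ from $Y$ and from $Y'$; applying \fullref{thm:git} to each of these subsegments bounds the diameters of $\pi_Y^\almost$ and $\pi_{Y'}^\almost$ on them by a sublinear function of $d(Y,Y')$. The midpoint $m$ of $\alpha$ satisfies $d(\pi_Y^\almost(m),y) = O(\almost)$ and analogously for $y'$ on the $Y'$ side, so by the projection-diameter bounds I can locate a point $g \in \gamma$ in its crossing region whose projections to $Y$ and to $Y'$ are within sublinear distance of $y$ and $y'$, respectively; a triangle inequality through these projections then yields $d(g,m) < \rho(d(Y,Y'))$ for a suitable sublinear $\rho$, giving the desired $d(\alpha,\gamma)<\rho(d(Y,Y'))$.
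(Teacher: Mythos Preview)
Your Case~2 outline has a real gap. You want to locate $g\in\gamma$ whose projections to $Y$ and $Y'$ are close to the endpoints of $\alpha$, but nothing in the argument produces this: projections of points of $\gamma$ to $Y$ cluster near the $Y$--endpoint $q$ of $\gamma$, and $q$ is an arbitrary point of $Y$ with no a~priori relation to the $Y$--endpoint of $\alpha$. The bound from \fullref{thm:git} on a subsegment of $\gamma$ is $\rho'$ of the maximum distance of its endpoints to $Y$; the endpoint on the $Y'$ side can have $d(\cdot,Y)$ arbitrarily large, since $|\gamma|=d(q,q')$ is unbounded, so this is not a bound sublinear in $d(Y,Y')$. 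Even granting both projection claims, routing $d(g,m)$ through $Y$ or $Y'$ picks up terms $d(g,Y)$ and $d(m,Y)$ of order $d(Y,Y')$, and in a general geodesic space $d(g,p)+d(g,p')\approx d(p,p')$ does not force $g$ near the particular geodesic $\alpha$. Your Case~1 sketch likewise leaves its stated obstacle open: the extension $\sigma\cdot[x',p']\cdot[p',p]$ is not a quasi-geodesic with uniform constants (take $x'$ far from $p'$), and ``combining the two bounds simultaneously'' is asserted rather than carried out.

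The paper's argument is direct and uses neither contraction nor \fullref{thm:git}. Let $\beta$ be a shortest geodesic from $x\in\alpha$ to $y\in\gamma$ and set $\delta:=[p,x]_\alpha\beta[y,q]_\gamma$ and $\delta':=[p',x]_\alpha\beta[y,q']_\gamma$, paths with both endpoints on $Y$, respectively $Y'$. An elementary computation shows that if $\delta$ fails to be a $(k,0)$--quasi-geodesic for some $k>3$ then $d(\alpha,\gamma)<2(d(Y,Y')+\almost)/(k-3)$. In Case~2, whenever $2\mu(k,0)<d(Y,Y')$ at least one of $\delta,\delta'$ must fail to be a $(k,0)$--quasi-geodesic, since otherwise the shared point $x$ lies in $\overline N_{\mu(k,0)}(Y)\cap\overline N_{\mu(k,0)}(Y')$; feeding the supremal such $k$ into the displayed inequality yields the sublinear $\rho$. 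Case~1 uses the same inequality at $k=4$ together with the Morse property of $Y$ and $Y'$ applied to $\delta$ and $\delta'$ themselves.
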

\begin{proof}
Take geodesics $\alpha$ and $\gamma$ as hypothesized.
Let $\beta$ be a geodesic from $\alpha$ to $\gamma$ with
$|\beta|=d(\alpha,\gamma)$. See \fullref{fig:constriction}.
\begin{figure}[h]
  \centering
\labellist
\small
\pinlabel $Y$ [r] at 34 59
\pinlabel $Y'$ [l] at 285 55
\pinlabel $\alpha$ [t] at 122 13
\pinlabel $\gamma$ [bl] at 123 55
\pinlabel $\beta$ [r] at 160 34
\tiny
\pinlabel $p$ [r] at 41 12
\pinlabel $q$ [r] at 8 119
\pinlabel $q'$ [l] at 312 119
\pinlabel $p'$ [l] at 280 12
\pinlabel $u$ [t] at 81 11
\pinlabel $u'$ [t] at 239 12
\pinlabel $x$ [t] at 160 11
\pinlabel $y$ [b] at 160 52
\pinlabel $v$ [b] at 85 61
\pinlabel $v'$ [b] at 233 60
\endlabellist
  \includegraphics[scale=.9]{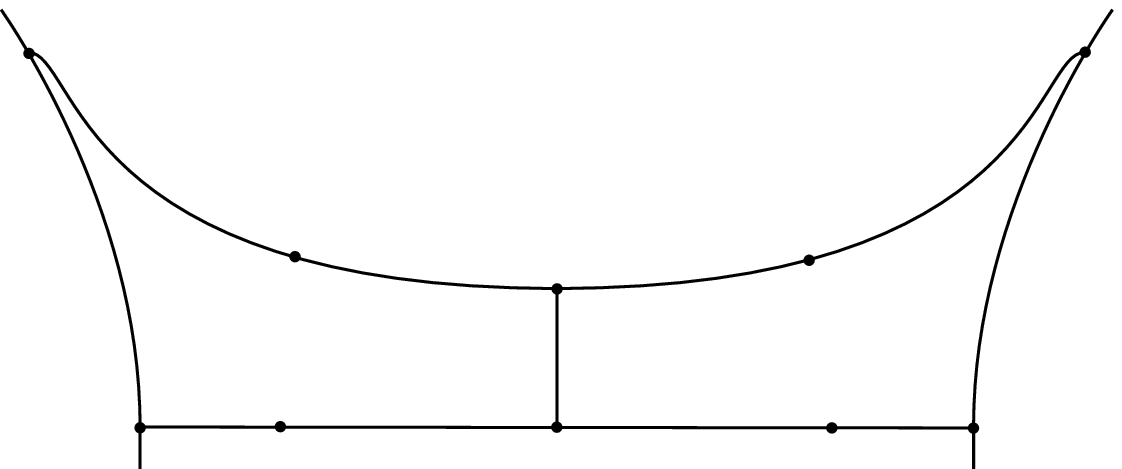}
  \caption{Setup for \fullref{lemma:quasiconvex}}
  \label{fig:constriction}
\end{figure}
Let $\delta:=[p,x]_\alpha\beta[y,q]_\gamma$ and
$\delta':=[p',x]_\alpha\beta[y,q']_\gamma$. (Recall that $[p,x]_\alpha$
  denotes the subsegment of $\alpha$ from $p$ to $x$.)
Suppose that $\delta$ fails to be a $(k,0)$--quasi-geodesic for some
$k>3$.
Both $[p,x]_\alpha\beta$ and $\beta[y,q]_\gamma$ are
$(3,0)$--quasi-geodesics, by minimality of $d(x,y)$, so there exist
points $u\in[p,x]_\alpha$ and $v\in[y,q]_\gamma$ such that $kd(u,v)<d(u,x)+d(x,y)+d(y,v)$.
Now, $d(v,y)\leqslant d(v,u)+d(u,x)+d(x,y)$, so:
\[(k-1)d(x,y)\leqslant (k-1)d(u,v)<2(d(u,x)+d(x,y))\]
Whence:
\begin{equation}
  \label{eq:7}
  d(\alpha,\gamma)=d(x,y)<\frac{2d(u,x)}{k-3}\leqslant\frac{2|\alpha|}{k-3}\leqslant\frac{2(d(Y,Y')+\almost)}{k-3}
\end{equation}

If $d(Y,Y')\leqslant 2\mu(4,0)$ and $\delta$ is not a
$(4,0)$--quasi-geodesic then $d(\alpha,\gamma)<4\mu(4,0)+2\almost$, by
\eqref{eq:7}.
This means $[y,q]_\gamma$ is a geodesic with one endpoint on $Y$ and
one within distance $6\mu(4,0)+2\almost$ of $Y$. 
Since $Y$ is $\mu$--Morse there is a $B_0$ depending on $\mu$ such that
such a geodesic segment is contained in the $B_0$--neighborhood of $Y$.

If $\delta$ is a $(4,0)$--quasi-geodesic it is contained in the
$\mu(4,0)$--neighborhood of $Y$.

The same arguments apply for $\delta'$, and $\gamma\subset
\delta\cup\delta'$, so if $d(Y,Y')\leqslant 2\mu(4,0)$ then $Y\cup Y'$
is $B$--quasi-convex for $B:=\max\{B_0,\mu(4,0)\}$.

Now suppose $d(Y,Y')>2\mu(4,0)$. Then $\delta$ and $\delta'$ cannot
both be $(4,0)$--quasi-geodesics. By \eqref{eq:7}:
  \begin{align*}
  d(\alpha,\gamma)&<\frac{2(d(Y,Y')+\almost)}{\sup\{k\in\R\mid
                    \delta\text{ or }\delta'\text{ is not a $(k,0)$--quasi-geodesic}\}-3}\\
&\leqslant\frac{2(d(Y,Y')+\almost)}{\sup\{k\in\R\mid  d(Y,Y')>2\mu(k,0)\}-3} 
  \end{align*}
Define: \[\rho(r):=\frac{2(r+\almost)}{\sup\{k\in\R\mid
  r>2\mu(k,0)\}-3}\]
We interpret $\rho(r)$ to be 0 if $\{2\mu(k,0)\}_{k\in\R}$ is bounded above by $r$.
For $r\geqslant \almost$ we have: 
\[\frac{\rho(r)}{r}\leqslant \frac{4}{\sup\{k\in\R\mid
  r>2\mu(k,0)\}-3}\]
The denominator is unbounded and non-decreasing as a function of $r$,
so we have $\lim_{r\to\infty}\frac{\rho(r)}{r}=0$.
\end{proof}

 We first give an application of the second part of \fullref{lemma:quasiconvex}.

\begin{proposition}\label{prop:sublinearprojection}
  Let $X$ be a geodesic metric space and let $Y$ and $Y'$ be
  $\mu$--Morse subspaces of $X$.
Let $\almost\geqslant 0$ be a constant such that 
the image of  $\pi_Y^\almost$ does not contain the
empty set, and such that there exist points $p\in Y$ and $p'\in Y'$
such that $d(p,p')\leqslant d(Y,Y')+\almost$.

Suppose $d(Y,Y')>2\mu(6,0)$. 
Then there is a sublinear function $\rho$ depending only on $\mu$ such that
$\diam\pi_Y^\almost(Y')\leqslant \rho(d(Y,Y'))$.
\end{proposition}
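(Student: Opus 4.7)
The plan is to apply the second part of \fullref{lemma:quasiconvex} to the near-realizing geodesic $\alpha := [p,p']$ and to $\gamma := [q,y']$ for arbitrary $y' \in Y'$ and $q \in \pi_Y^\almost(y')$. Both are geodesics from $Y$ to $Y'$ and $|\alpha| \leqslant d(Y,Y') + \almost$, so the proposition yields a sublinear function $\rho_0$ (depending only on $\mu$ and $\almost$) with $d(\alpha,\gamma) < \rho_0(r)$, where $r := d(Y,Y')$. Pick a realizing segment $\beta$ with endpoints $x \in \alpha$ and $y \in \gamma$.

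Consider the concatenation $\delta := [p,x]_\alpha \cdot \beta \cdot [y,q]_\gamma$ from $p$ to $q$; both endpoints lie on $Y$. The minimality of $d(x,y)$ forces both $[p,x]_\alpha \cdot \beta$ and $\beta \cdot [y,q]_\gamma$ to be $(3,0)$-quasi-geodesics, as in the proof of \fullref{lemma:quasiconvex}. Assuming $\delta$ itself is a $(6,0)$-quasi-geodesic, Morseness of $Y$ confines it to $\overline{N}_{\mu(6,0)}(Y)$, so $x$ and $y$ each lie within $\mu(6,0)$ of $Y$. The first fact, combined with $|\alpha| \leqslant r+\almost$ and $d(p',Y) \geqslant r$, forces $d(p,x) \leqslant \mu(6,0) + \almost$. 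The second, combined with $d(q,y') \leqslant d(y',Y) + \almost$ and the triangle inequality along $[q,y']$ via a nearby point of $Y$ to $y$, forces $d(q,y) \leqslant \mu(6,0) + \almost$. Hence $d(p,q) \leqslant 2(\mu(6,0)+\almost) + \rho_0(r)$, which is sublinear in $r$. Since $p \in \pi_Y^\almost(p')$, varying $y'$ and $q$ gives the desired sublinear bound on $\diam \pi_Y^\almost(Y')$.

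The main obstacle is the remaining case in which $\delta$ is not a $(6,0)$-quasi-geodesic. Here the $(3,0)$-quasi-geodesic property of the two halves of $\delta$ forces any shortcut to span both halves, and the dichotomy in the proof of \fullref{lemma:quasiconvex} together with $d(Y,Y') > 2\mu(6,0)$ rules out the analogous path $\delta' := [p',x]_\alpha \cdot \beta \cdot [y,y']_\gamma$ (with endpoints on $Y'$) from also being $(6,0)$-quasi-geodesic. I intend to handle this by cutting a maximal shortcut in $\delta$, in the spirit of the case analysis in the proof of \fullref{prop:morseimpliessublinearcontraction}, to produce a $(6,0)$-quasi-geodesic from $p$ to $q$ whose Morseness-based confinement to $\overline{N}_{\mu(6,0)}(Y)$ supplies the same sublinear bound on $d(p,q)$; the margin $2\mu(6,0)$ in the hypothesis is exactly what is needed for this shortcut-cutting to succeed.
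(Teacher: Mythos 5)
Your first case is correct and is a clean variant of the paper's argument: when $\delta=[p,x]_\alpha\beta[y,q]_\gamma$ is a $(6,0)$--quasi-geodesic, the confinement to $\overline{N}_{\mu(6,0)}(Y)$ does force $d(p,x)\leqslant\mu(6,0)+\almost$ and $d(q,y)\leqslant\mu(6,0)+\almost$, and together with the bound $d(x,y)<\rho_0(r)$ from \fullref{lemma:quasiconvex} this gives the desired sublinear bound on $d(p,q)$.

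The gap is in the complementary case, and the proposed shortcut-cutting does not close it. Suppose $\delta$ fails to be a $(6,0)$--quasi-geodesic and you excise a maximal shortcut, replacing $[w,z]_\delta$ (with $w\in[p,x)_\alpha$, $z\in(y,q]_\gamma$, and $6d(w,z)=|[w,z]_\delta|$) by a geodesic to obtain a genuine $(6,0)$--quasi-geodesic $\delta''$ from $p$ to $q$. Morseness confines $\delta''$ to $\overline{N}_{\mu(6,0)}(Y)$, which (by the same computations as in your first case) controls $d(p,w)$ and $d(q,z)$ --- but it gives \emph{no} control of $d(w,z)$: a path between two points of $Y$ that stays near $Y$ says nothing about how far apart those points are. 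Your only handle on $d(w,z)$ is $6d(w,z)=d(w,x)+d(x,y)+d(y,z)$, and while $d(x,y)<\rho_0(r)$, the terms $d(w,x)$ and $d(y,z)$ are never bounded anywhere in the argument; a priori they can be of order $|\alpha|\approx d(Y,Y')$ (for instance if $x$ lies near $p'$), so this chain of inequalities only yields $d(p,q)\lesssim d(Y,Y')/3$, which is linear, not sublinear. The missing ingredient is exactly what the paper brings in at this point: by \fullref{prop:morseimpliessublinearcontraction}, the Morse property gives a sublinear $\rho'$ with $\pi_Y^\almost$ $(r,\rho')$--contracting, and the failure of $\delta$ (or $\delta'$) to be a quasi-geodesic is used not to shorten the path but to produce a pair of points (such as $x,y$, or $u',v'$) whose mutual distance is at most their distance to $Y$ and whose $\almost$--projections contain $p$ and $q$ respectively; the contraction inequality then bounds $d(p,q)$ by $\rho'(d(Y,Y')+\almost)$. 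Without invoking some form of contraction, the second case cannot be finished by Morse confinement alone.
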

\begin{proof}
Since $Y$ is $\mu$--Morse, there is a sublinear function $\rho'$
depending only on $\mu$ such
that $Y$ is $(r,\rho')$--contracting, by
\fullref{prop:morseimpliessublinearcontraction}.

Note that $p\in\pi_Y^\almost(p')$.
Choose $q'\in Y'$ and $q\in\pi_Y^\almost(q')$. 
Let $\gamma$ be a geodesic from $q$ to $q'$, let $\alpha$ be a
geodesic from $p$ to $p'$, and let $x\in\alpha$ and $y\in\gamma$ be
points such that $d(x,y)=d(\alpha,\gamma)$.
The setup is the same as in \fullref{lemma:quasiconvex}, and we make
the corresponding definitions of $\delta$, $\delta'$, etc.

Suppose $\delta'$ is not a $(5,0)$--quasi-geodesic.
Define $u'$ and $v'$ as in \fullref{lemma:quasiconvex}, so that
$d(u',x)+d(x,y)+d(y,v')>5d(u',v')$.
We have $p\in\pi_Y^\almost(u')$ and $q\in\pi_Y^\almost(v')$.
By definition of $x$ and $y$, we know $d(x,y)\leqslant d(u',v')$, so $d(u',x)+d(y,v')>4d(u',v')$.
In particular, we have $2d(u',v')<d(u',x)$ or
$2d(u',v')<d(v',y)$.
We suppose the former, the other case being similar.

First, suppose that $d(u',Y)<\almost$. Then:
\begin{align*}
  d(p,q)&\leqslant d(p,v')+d(v',q)\\
&\leqslant 2d(p,v')+\almost\\
&\leqslant 2(d(p,u')+d(u',v'))+\almost\\
&\leqslant 2d(p,u')+d(u',x)+\almost\\
&\leqslant 3(d(u',Y)+\almost)+\almost<7\almost
\end{align*}

Otherwise, if $d(u',Y)\geqslant\almost$, then we have:
\[d(u',v')<\frac{1}{2}d(u',x)\leqslant\frac{1}{2}(d(u',Y)+\almost)\leqslant
d(u',Y)\]
By the contraction property:
\[d(p,q)\leqslant\diam\pi_Y^\almost(u')\cup\pi_Y^\almost(v')\leqslant\rho'(d(u',Y))\leqslant\rho'(d(Y,Y')+\almost)\]

Suppose instead that $\delta'$ is a $(5,0)$--quasi-geodesic. 
Then $\delta$ is not a $(6,0)$--quasi-geodesic, since $d(Y,Y')>2\mu(6,0)$.
 By \eqref{eq:7} we
have: 
\[d(x,y)<\frac{2}{3}(d(x,u))\leqslant
\frac{2}{3}(d(x,Y)+\almost)\]

If $d(x,Y)\leqslant 2\almost$
it follows that $d(x,y)\leqslant 2\almost$.
Thus $d(y,Y)\leqslant
d(y,x)+d(x,Y)\leqslant 4\almost$, and:
\[d(p,q)\leqslant  d(q,y) + d(y,x)+d(x,p)\leqslant d(y,Y)+\almost
+2\almost + d(x,Y)+\almost \leqslant 10\almost\]

Otherwise $d(x,Y)>2\almost$ and it follows that $d(x,y)\leqslant
d(x,Y)$. 
We then use the contraction property to see:
\[d(p,q)\leqslant\diam \pi_Y^\almost(x)\cup\pi_Y^\almost(y)\leqslant
\rho'(d(x,Y))\leqslant \rho'(d(Y,Y')+\almost)\]

Since $q'$ was an arbitrary point in $Y'$ and $q$ was an arbitrary
point of $\pi_Y^\almost(q')$, we conclude $\diam
\pi_Y^\almost(Y')\leqslant 2(\rho'(d(Y,Y')+\almost)+10\almost)$.
\end{proof}

We also have the following applications of the first part of \fullref{lemma:quasiconvex}:
\begin{corollary}
  A geodesic triangle in which two of the sides are
  $\mu$--Morse is $\delta$--thin, with $\delta$ depending only
  on $\mu$.
\end{corollary}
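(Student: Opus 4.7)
The plan is to apply the first part of \fullref{lemma:quasiconvex} to the pair of Morse sides, obtaining quasi-convexity of their union, and then to handle the other two inclusions required for $\delta$-thinness by direct geometric arguments. Since the Morse sides $[a,b]$ and $[a,c]$ share the vertex $a$, we have $d([a,b], [a,c]) = 0 \leqslant 2\mu(4,0)$; with $\almost = 0$, the lemma yields a constant $B$ depending only on $\mu$ such that $[a,b] \cup [a,c]$ is $B$--quasi-convex, and hence $[b,c] \subset N_B([a,b] \cup [a,c])$. This takes care of one of the three inclusions in the definition of $\delta$--thinness, and by symmetry between $b$ and $c$ it remains only to prove $[a,b] \subset N_\delta([a,c] \cup [b,c])$.

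To split $[a,b]$ into manageable pieces I first locate a ``cusp'' on $[b,c]$: the continuous function $z \mapsto d(z, [a,b]) - d(z, [a,c])$ is non-positive at $b$ and non-negative at $c$, so by the intermediate value theorem there is $z_0 \in [b,c]$ at which both distances agree and, by the previous quasi-convexity, both are at most $B$. Choose $p_0 \in [a,b]$ and $q_0 \in [a,c]$ realizing these distances; then $d(p_0, q_0) \leqslant 2B$. Decompose $[a,b] = [a, p_0]_{[a,b]} \cup [p_0, b]_{[a,b]}$ and bound the distance from each half to $[a,c] \cup [b,c]$ separately.

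For $x \in [a, p_0]_{[a,b]}$ I use the elementary fact that a concatenation of two geodesics of lengths $L_1$ and $L_2$ is always a $(1, 2\min(L_1, L_2))$--quasi-geodesic, the only possible failure being a backtrack at the corner. Thus $[a, p_0]_{[a,b]}\cdot[p_0, q_0]$ is a $(1, 4B)$--quasi-geodesic from $a$ to $q_0$ with both endpoints on $[a,c]$, and the $\mu$--Morse property of $[a,c]$ places it in $N_{\mu(1, 4B)}([a,c])$; in particular $d(x, [a,c]) \leqslant \mu(1,4B)$. The main obstacle is the complementary case $x \in [p_0, b]_{[a,b]}$, since the analogous construction would require $[b,c]$ itself to be Morse, which is not given. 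I argue from the quasi-convexity instead: for each $z$ on the subsegment $[b, z_0]_{[b,c]}$ choose $\psi(z) \in [a,b]$ closest to $z$, with $\psi(b) = b$ and $\psi(z_0) = p_0$. Then $d(z, \psi(z)) \leqslant B$, and the triangle inequality gives $|t(\psi(z)) - t(\psi(z'))| \leqslant d(z, z') + 2B$, where $t$ denotes arc-length along $[a,b]$ measured from $b$. The real function $s \mapsto t(\psi(z(s)))$, defined on the arc-length parametrization of $[b, z_0]_{[b,c]}$, therefore has jumps of at most $2B$, starts at $0$, and ends at $d(b, p_0)$; an infimum argument then shows that for every $t^* \in [0, d(b, p_0)]$ there is a parameter $s$ with $|t(\psi(z(s))) - t^*| \leqslant 2B$. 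Applied with $t^* = d(b, x)$ this yields $d(x, [b,c]) \leqslant d(x, \psi(z(s))) + d(\psi(z(s)), z(s)) \leqslant 2B + B = 3B$.

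Combining the two halves gives $[a,b] \subset N_\delta([a,c] \cup [b,c])$ for $\delta := \max(\mu(1, 4B), 3B)$; the symmetric inclusion $[a,c] \subset N_\delta([a,b] \cup [b,c])$ follows by interchanging $b$ and $c$. Together with $[b,c] \subset N_B([a,b] \cup [a,c])$ from the first step, this proves the triangle is $\delta$--thin with $\delta$ depending only on $\mu$.
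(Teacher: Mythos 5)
Your proof is correct, and its first step is exactly the route the paper intends: the corollary is stated as an application of the first part of \fullref{lemma:quasiconvex}, applied to the two Morse sides $[a,b]$ and $[a,c]$ meeting at $a$ (so $d(Y,Y')=0\leqslant 2\mu(4,0)$), which gives $[b,c]\subset N_B([a,b]\cup[a,c])$. The paper leaves the remaining two inclusions implicit, and you supply a complete and standard argument for them: splitting $[a,b]$ at the point $p_0$ opposite the ``cusp'' $z_0$ of $[b,c]$, handling $[a,p_0]$ via the Morse property of $[a,c]$ applied to the $(1,4B)$--quasi-geodesic $[a,p_0][p_0,q_0]$, and handling $[p_0,b]$ by the coarse-connectedness argument pushing points of $[b,z_0]$ onto $[a,b]$. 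Two small points of care, neither of which affects the result: you should take $z_0$ to be the \emph{first} point of $[b,c]$ (from $b$) at which $d(\cdot,[a,b])=d(\cdot,[a,c])$, so that $d(z,[a,b])\leqslant B$ holds for \emph{all} $z\in[b,z_0]_{[b,c]}$ and not just at $z_0$ (quasi-convexity only bounds the minimum of the two distances, and the sign of the difference need not be constant up to an arbitrary zero); and in a general geodesic space closest points need not exist, so $\psi$ should be an $\epsilon$--closest point map, which only perturbs the final constant by an arbitrarily small amount.
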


\begin{corollary}\label{corollary:uniformcontractionimplieshyperbolic}
  Suppose $X$ is a geodesic metric space and $\mathcal{P}$ is a
  collection of $(\rho_1,\rho_2)$--contracting paths such that for every pair
  of points $x,\,y\in X$ there exists a 
  $\gamma\in\mathcal{P}$ with endpoints $x$ and $y$.
Then $X$ is $\delta$--hyperbolic, with $\delta$ depending only
on $\rho_1$ and $\rho_2$.
\end{corollary}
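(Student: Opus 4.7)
By \fullref{theorem:morse_equiv_contracting}, uniform $(\rho_1,\rho_2)$-contraction of the paths in $\mathcal{P}$ gives a common Morse function $\mu$ depending only on $\rho_1,\rho_2$, so every pair of points in $X$ is joined by a $\mu$-Morse path. The plan is to deduce $\delta$-hyperbolicity from the preceding corollary (a geodesic triangle with two Morse sides is $\delta$-thin) by showing that every geodesic in $X$ is itself $\mu'$-Morse, with $\mu'$ depending only on $\mu$. Once this is done, every geodesic triangle has three Morse sides, is $\delta$-thin, and the Rips condition gives hyperbolicity with $\delta$ depending only on $\rho_1,\rho_2$.

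To show a geodesic $\alpha$ from $u$ to $v$ is Morse, it suffices, by \fullref{theorem:morse_equiv_contracting}, to take a continuous $(L,0)$-quasi-geodesic $\beta$ with endpoints $u',v' \in \alpha$ and bound $d(\beta,\alpha)$ uniformly. Set $Y:=\gamma_{u'v'} \in \mathcal{P}$. The Morse property of $Y$ yields
\[
  \beta \subset N_{\mu(L,0)}(Y) \qquad\text{and}\qquad \alpha|_{[u',v']} \subset N_{\mu(1,0)}(Y),
\]
so it is enough to produce a constant $C=C(\mu)$ with $Y \subset N_C(\alpha|_{[u',v']})$.

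For the missing inclusion, fix $q \in Y$ and take $\gamma_{u'q},\gamma_{qv'} \in \mathcal{P}$. These share $q$, so $d(\gamma_{u'q},\gamma_{qv'})=0 \le 2\mu(4,0)$, and the first case of \fullref{lemma:quasiconvex} yields a constant $B=B(\mu)$ such that $\gamma_{u'q} \cup \gamma_{qv'}$ is $B$-quasi-convex. Since $\alpha|_{[u',v']}$ has endpoints in this union, $\alpha|_{[u',v']} \subset N_B(\gamma_{u'q} \cup \gamma_{qv'})$. As $\alpha|_{[u',v']}$ is a connected arc covered by the two closed sets $N_B(\gamma_{u'q})$ and $N_B(\gamma_{qv'})$, starting in the first and ending in the second, they must meet: there is a transition point $w \in \alpha|_{[u',v']}$ within $B$ of both paths.

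The main obstacle is to bound $d(w,q)$ uniformly in terms of $\mu$. Morally, two Morse paths that meet at $q$ cannot come close again far from $q$ without their $(r,\rho)$-contractions being violated, but turning this intuition into a quantitative statement is the technical heart of the argument and requires some care, as \fullref{prop:sublinearprojection} needs $d(Y,Y')>2\mu(6,0)$, which fails here. I expect to handle it by picking $q_1 \in \gamma_{u'q}$, $q_2 \in \gamma_{qv'}$ with $d(w,q_i)\le B$ (so $d(q_1,q_2)\le 2B$), projecting $q_2$ onto $\gamma_{u'q}$ via the contracting projection, and arguing that this projection set contains both a point near $q_1$ and a point near $q$ — forcing $d(q_1,q) \le \rho(\text{something})$. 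Once $d(w,q)\le C(\mu)$ is secured, $Y \subset N_C(\alpha|_{[u',v']})$ holds, hence $\beta \subset N_{\mu(L,0)+C}(\alpha)$, so $\alpha$ is $\mu'$-Morse; applying the preceding corollary to every geodesic triangle then completes the proof.
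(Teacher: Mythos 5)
Your high-level reduction is reasonable (and is presumably close to what the authors intend, since they state the result without proof as an application of \fullref{lemma:quasiconvex}): uniform contraction gives a uniform Morse function $\mu$ for the paths in $\mathcal{P}$, and if one can show that every geodesic in $X$ is $\mu'$-Morse with $\mu'$ depending only on $\mu$, the Rips condition follows from the preceding corollary. The problem is that the key claim you reduce to, namely $Y=\gamma_{u'v'}\subset N_C(\alpha|_{[u',v']})$ with $C=C(\mu)$, is simply false, so the ``main obstacle'' you flag cannot be overcome as stated. Take $X$ a tree and let $\mathcal{P}$ assign to $u',v'$ the path $u'\to m\to x_0\to m\to v'$, where $m\in[u',v']$ and $x_0$ is arbitrarily far away: its image is a subtree, hence strongly contracting (so $\mathcal{P}$ satisfies the hypotheses with $\rho_1(r)=r$, $\rho_2=0$), yet the point $q=x_0\in Y$ is arbitrarily far from $[u',v']$. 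In this example your transition point $w$ lies on $[u',v']$ near the branch point $m$, so $d(w,q)$ is unbounded; the guiding intuition that ``two Morse paths meeting at $q$ cannot come close again far from $q$'' fails ($\gamma_{u'q}$ and $\gamma_{qv'}$ both pass near $m$, far from $q$), and the proposed fix via the contracting projection of $q_2$ onto $\gamma_{u'q}$ gives no reason for that projection to contain a point near $q$ rather than only points near $q_1$.

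What you actually need is weaker than $Y\subset N_C(\alpha)$: you need $d(\beta,\alpha)$ bounded, and any correct argument must use that $\beta$ is a quasi-geodesic, hence cannot follow a far excursion of $\gamma_{u'v'}$ out and back. Knowing only that some point $p$ is close to $Y$ and that $\alpha\subset N_{\mu(1,0)}(Y)$ never forces $p$ close to $\alpha$, as the example shows. A workable route is a divergence-style or subdivision argument in the spirit of \fullref{lem:constructingquasigeods} and \fullref{lem:morseimpliescsld}, or an argument that bounds $\diam\pi_\alpha^\almost(x)\cup\pi_\alpha^\almost(x')$ for $d(x,x')\leqslant d(x,\alpha)$ directly (so that $\alpha$ is sublinearly contracting by \fullref{theorem:morse_equiv_contracting}); in either case the contraction or Morse property of paths in $\mathcal{P}$ between \emph{many} auxiliary pairs of points must enter, not just between $u'$ and $v'$. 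As it stands, the proof has a genuine gap at its central step.
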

\fullref{corollary:uniformcontractionimplieshyperbolic} is an analogue
of \cite[Theorem~2.3]{MasMin99}, which is roughly the same statement
when the paths in $\mathcal{P}$ are all semi-strongly contracting with
uniform
contraction parameters.

\begin{corollary}\label{corollary:groupuniformcontractingimplieshyperbolic}
  Let $G$ be a group generated by a finite set $\mathcal{S}$.
Suppose there exist functions $\rho_1$ and $\rho_2$ and, for each
$g\in G$, a path
$\alpha_g$ from 1 to $g$ in $\Cay(G,\mathcal{S})$ that is
$(\rho_1,\rho_2)$--contracting. 
Then $G$ is hyperbolic. 
\end{corollary}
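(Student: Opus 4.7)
The plan is to reduce to \fullref{corollary:uniformcontractionimplieshyperbolic} applied to the Cayley graph $X:=\Cay(G,\mathcal{S})$, which is a geodesic metric space since $\mathcal{S}$ is finite. The task is to produce, for every pair of points in $X$, a path between them that is contracting, with contraction parameters uniform in the pair.

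First, I would observe that left multiplication by any $h\in G$ is an isometry of $X$, and isometries preserve contraction parameters since the $\almost$--closest point projection is defined purely metrically. Concretely, if $\pi_Y^\almost$ is $(\rho_1,\rho_2)$--contracting then so is $\pi_{hY}^\almost$, with the same functions. Applied to the given family, this means for any ordered pair of vertices $(x,y)\in G\times G$, the translated path $x\cdot\alpha_{x^{-1}y}$ is a $(\rho_1,\rho_2)$--contracting path from $x$ to $y$.

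Second, to handle arbitrary points $p,q\in X$ (not necessarily vertices), I would pick vertices $x,y$ within distance $1/2$ of $p,q$ and let $\beta_{p,q}$ be the concatenation of $[p,x]$ with $x\cdot\alpha_{x^{-1}y}$ with $[y,q]$. Then $\beta_{p,q}$ is $(1/2)$--Hausdorff equivalent to $x\cdot\alpha_{x^{-1}y}$. Invoking \fullref{theorem:morse_equiv_contracting} to upgrade the contraction of $x\cdot\alpha_{x^{-1}y}$ to the form $(r,\rho'')$ with $\rho''$ depending only on $\rho_1,\rho_2$, and then applying \fullref{lem:strcontbddHdist}, yields that $\beta_{p,q}$ is $(r,\rho''')$--contracting with $\rho'''$ independent of the pair $(p,q)$. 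The collection $\mathcal{P}:=\{\beta_{p,q}\}_{p,q\in X}$ then satisfies the hypothesis of \fullref{corollary:uniformcontractionimplieshyperbolic}, so $X$, and hence $G$, is hyperbolic.

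The main (really the only) substantive step is verifying that isometries of $X$ preserve the contracting property; this is immediate from the metric definition of $\pi_Y^\almost$. The rest is bookkeeping that repackages the input hypothesis into the form demanded by \fullref{corollary:uniformcontractionimplieshyperbolic} using \fullref{theorem:morse_equiv_contracting} and \fullref{lem:strcontbddHdist}.
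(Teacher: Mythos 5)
Your proposal is correct and is essentially the argument the paper intends: the corollary is stated as an immediate consequence of \fullref{corollary:uniformcontractionimplieshyperbolic}, obtained by translating the given paths by the left-multiplication isometries of $\Cay(G,\mathcal{S})$ to connect all pairs of vertices. Your extra care in extending to non-vertex points via $1/2$--Hausdorff equivalence and \fullref{lem:strcontbddHdist} is a valid (if slightly more detailed than necessary) way to finish the bookkeeping.
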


We must assume uniform contraction in
\fullref{corollary:groupuniformcontractingimplieshyperbolic}, even for
finitely presented groups. 
Dru{\c{t}}u, Mozes, and Sapir \cite{DruMozSap10} show that if $H$ is a
finitely generated subgroup of a finitely generated group $G$ and
$h\in H$ is a Morse element in $G$, that is, $\langle h\rangle$ is
Morse in some, hence, every, Cayley graph of $G$, then $h$ is a Morse element in $H$.
Thus, if $H$ is a finitely generated subgroup of a torsion-free
hyperbolic group then every element of $H$ is Morse.
However, Brady \cite{Bra99} constructed an example of a finitely presented
subgroup $H$ of a torsion-free hyperbolic group $G$ such that $H$ is
not hyperbolic.

Fink \cite{Fin15} claims that if all geodesics in a homogeneous proper
geodesic metric space are Morse, then the space is hyperbolic.
First is an assertion, \cite[Proposition~3.2]{Fin15}, that if
every geodesic is Morse then the collection of geodesics is uniformly
Morse, ie, there exists a $\mu$ such that every geodesic is $\mu$--Morse.
Then an asymptotic cone argument is used to conclude the space is
hyperbolic.
This second step can now be accomplished via our
\fullref{corollary:uniformcontractionimplieshyperbolic} without resort
to the asymptotic cone machinery.



\bibliographystyle{hypershort}
\bibliography{MDC}

\end{document}